\numberwithin{equation}{section}
\newtheorem{theo}{Theorem}
\newtheorem{lem}{Lemma}[section]
\newtheorem{defi}{Definition}[section]
\newtheorem{prop}{Proposition}[section]
\newtheorem{rmk}{Remark}[section]
\newcommand{\R}{\mathbb{R}}
\newcommand{\Z}{\mathbb{Z}}
\newcommand{\N}{\mathbb{N}}
\newcommand{\Q}{\mathbb{Q}}
\newcommand{\bqs}{\begin{equation*}}
\newcommand{\eqs}{\end{equation*}}
\newcommand{\bqq}{\begin{equation}}
\newcommand{\eqq}{\end{equation}}
\DeclareMathAlphabet\mathbfcal{OMS}{cmsy}{b}{n}
\date{}
\begin{document}
\title{Admissible speeds in spatially periodic bistable reaction-diffusion equations}
\author{Weiwei Ding\footnote{School of Mathematical Sciences, South China Normal University, Guangzhou 510631, China},  Thomas Giletti\footnote{IECL UMR 7502, University of Lorraine, B.P. 70239, F-54506 Vandoeuvre-l\`{e}s-Nancy Cedex}}
\maketitle

\begin{abstract}
Spatially periodic reaction-diffusion equations typically admit pulsating waves which describe the transition from one steady state to another. Due to the heterogeneity, in general such an equation is not invariant by rotation and therefore the speed of the pulsating wave may a priori depend on its direction. However, little is actually known in the literature about whether it truly does: surprisingly, it is even known in the one-dimensional monostable Fisher-KPP case that the speed is the same in the opposite directions despite the lack of symmetry. Here we investigate this issue in the bistable case and show that the set of admissible speeds is actually rather large, which means that the shape of propagation may indeed be asymmetrical. More precisely, we show in any spatial dimension that one can choose an arbitrary large number of directions, and find a spatially periodic bistable type equation to achieve any combination of speeds in those directions, provided those speeds have the same sign. In particular, in spatial dimension 1 and unlike the Fisher-KPP case, any pair of (either nonnegative or nonpositive) rightward and leftward wave speeds is admissible.
We also show that these variations in the speeds of bistable pulsating waves lead to strongly asymmetrical situations in the multistable equations. 
\end{abstract}

\section{Introduction}

In this work, we consider a spatially periodic reaction-diffusion equation of the form
\begin{equation}\label{eq:main}
\partial_t u  = \Delta u + f(x,u), \quad t \in \R \, , \ x \in \R^d.
\end{equation}
When it is of the bistable type (see below for a more precise definition), there exists a pulsating wave in each direction~$e \in \mathbb{S}^{d-1}$, namely an entire in time solution which moves with a constant speed through the domain and typically describes the spatio-temporal transition from one stable steady state to another. We will denote by $c^*$ the speed of those pulsating waves, and this defines a function
$$e \in \mathbb{S}^{d-1} \mapsto c^* (e).$$
In the homogeneous case, i.e. $f (x,u) = f(u)$, then the equation is invariant by rotation and therefore the function $c^* (e)$ is actually constant on the unit sphere. However, in the general case then the function $c^* (e)$ may a priori depend on $e \in \mathbb{S}^{d-1}$ in a non-trivial way; we refer to \cite{ducasse,dr} for some related examples in periodic domains with holes. This may in turn have significant consequences on the asymptotic shape of propagation of solutions of the Cauchy problem~\eqref{eq:main}. The purpose of the present paper is to describe the set of admissible speeds, i.e. the set of functions that are achievable when varying the heterogeneous reaction term~$f$.\\

Let us first define more precisely the class of reaction-diffusion equations which we will consider.
\begin{defi}\label{bistable}
We say that \eqref{eq:main} is of the spatially periodic \textbf{bistable} type if $f \in C^1 (\R^{d+1} ; \R) $ and:
\begin{itemize}
\item there exist $L_1, \cdots, L_d >0$ such that, for any integers $k_1, \cdots, k_d$,
		$$ f (x + k L  , u) \equiv f (x, u ) ,$$
 where $k L := (k_1 L_1 , \cdots , k_d L_d)$ and the vector $L := (L_1, \cdots , L_d)$ is the \textbf{period} of~\eqref{eq:main};
\item the constants 0 and 1 are linearly stable steady states of \eqref{eq:main} with respect to spatially $L$-periodic solutions;
\item any other $L$-periodic steady state of \eqref{eq:main} between 0 and 1 is linearly unstable.
\end{itemize}
\end{defi}

Here, an $L$-periodic steady state $\bar{u}$ of \eqref{eq:main} is said to be linearly stable ({\it resp.} linearly unstable) if $\lambda_1(\bar{u})<0$ ({\it resp.} $\lambda_1(\bar{u})>0$), where $\lambda_1(\bar{u})$ is the principal eigenvalue of the problem 
\begin{equation*}%\label{p-eigen-hd}
\Delta\psi+\partial_uf(x,\bar{u})\psi=\lambda\psi\hbox{ in }\R^d,\quad \psi>0\hbox{ in }\R^d,\quad \psi\hbox{ is }L\hbox{-periodic}.
\end{equation*}

Next, we recall the definition of a pulsating wave:
\begin{defi}\label{pulsating-tw}
A \textbf{pulsating wave} connecting 0 and 1 is an entire in time solution $U(t,x)$ of~\eqref{eq:main} of the type
\begin{equation}\label{formula-tw}
U (t,x) = \Phi ( x \cdot e - ct ,x),
\end{equation}
where $c \neq 0$ and the function $\Phi$ is periodic in its second variable, and satisfies
$$\Phi (- \infty , \cdot) = 1 , \quad  \Phi ( + \infty , \cdot) = 0,$$
where both convergences are understood to be uniform with respect to the second variable. We call $c \in \R^*$ the speed of the wave and $e \in \mathbb{S}^{d-1}$ its direction. 

Furthermore, we say that $U (t,x) = U (x)$ is a \textbf{stationary pulsating wave}, or a pulsating wave with speed $c=0$, if it solves~\eqref{eq:main} and satisfies that  $U (x) \to 1$ ({\it resp.}~0) as $x \cdot e \to -\infty$ (\it{resp.} $+\infty$).
\end{defi} 
This notion has been introduced by Xin~\cite{xin,xin2} and Shigesada, Kawasaki, Teramoto~\cite{skt}.    
It is the extension of a more classical notion of a traveling wave in the homogeneous case (see for instance~\cite{aw,fm1,fm2}).

We notice here that in the case $c\neq 0$, formula \eqref{formula-tw} can be written as 
$$\Phi(\xi,x)=U\left(\frac{x\cdot e - \xi}{c},x\right)\,\,\hbox{ for all }\,\, (\xi,x)\in\R\times\R^d,  $$
while the periodicity of $\Phi(\xi,x)$ in $x$ means that there exist $L=(L_1,\cdots L_d)$ with $L_i>0$ for all $1\leq i\leq d$ such that for any $k\in\Z^d$, 
\begin{equation}\label{formula-period}
U\left(t+ \frac{kL\cdot e}{c},x+kL\right)=U\left(t,x\right) \,\,\hbox{ for all }\,\, (t,x)\in\R\times\R^d.
\end{equation}
\begin{rmk}\label{rmk:c=0}
We point out that the change of variables $(t,x) \mapsto (x \cdot e - ct ,x)$ is revertible only when $c \neq 0$, which is why the case when $c=0$ must be defined separately. In particular, when $c \neq 0$, one can check that the function $\Phi$ satisfies a (degenerate) elliptic equation and thus possesses some regularity.
\end{rmk}
It is known~\cite{ducrot,fz,gr} that, when~\eqref{eq:main} is of the bistable type in the above sense, then the following existence and uniqueness result holds true.
\begin{theo}\label{th:tw}
Assume that \eqref{eq:main} is bistable in the sense of Definition~\ref{bistable}. Then for each direction $e \in \mathbb{S}^{d-1}$, there exists a unique speed $c^* (e)$ such that there exists a pulsating wave $U(t,x)$ with speed $c^* (e)$ in direction $e$. If $c^* (e)\neq 0$, then
\begin{equation}\label{sign-property}
{\rm sign}(c^* (e)) = {\rm sign}\left(\int_0^1\int_{(0,L_1)\times\cdots\times(0,L_d)} f(x,u)dxdu\right).
\end{equation}
Furthermore, if there exists $\delta>0$ such that 
\begin{equation*}
s\mapsto f(x,s) \hbox{ is nonincreasing in } [0,\delta]  \hbox{ and in } [1-\delta,1],
\end{equation*}
then the wave $U(t,x)$ is increasing ({\it resp.} decreasing) with respect to $t\in\R$ if $c^* (e)>0$ ({\it resp.} if $c^* (e)<0$), and it is unique up to time shifts if $c^* (e)\neq 0$. 
\end{theo}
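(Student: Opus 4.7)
The statement bundles four separate claims: existence of a pulsating wave, uniqueness of the admissible speed $c^*(e)$, the sign identity \eqref{sign-property}, and monotonicity in $t$ plus uniqueness up to time shifts under the extra one-sided monotonicity hypothesis. My plan is to extract the first two from the abstract theory of bistable waves in monotone periodic semiflows of Fang--Zhao, and to handle the remaining two by direct PDE arguments on the profile $\Phi$. I expect the main obstacle to be the verification that the hypotheses of the abstract theorem genuinely apply here: one must translate the assumption that every intermediate $L$-periodic equilibrium is linearly unstable into the ``basin'' condition of the abstract framework, which calls for a principal eigenvalue / Perron--Frobenius argument to build local sub- and supersolutions repelling trajectories away from those intermediate equilibria and towards $\{0,1\}$.

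\textbf{Existence and uniqueness of the speed.} Fix $e \in \mathbb{S}^{d-1}$. I would view \eqref{eq:main} as generating a strongly monotone semiflow on the space of continuous $L$-periodic functions with values in $[0,1]$, the order being pointwise and the monotonicity coming from the parabolic comparison principle. The bistability hypothesis then supplies precisely the structure required by Fang--Zhao: the states $0$ and $1$ are stable equilibria while every intermediate $L$-periodic equilibrium is unstable. Their abstract theorem outputs a unique admissible wave speed $c^*(e)$ together with a profile $\Phi(\xi,x)$.

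\textbf{Sign property.} Assume $c^* := c^*(e) \neq 0$. Writing $U(t,x) = \Phi(x \cdot e - c^* t, x)$ and using elliptic regularity for the non-degenerate equation satisfied by $\Phi$ (cf.\ Remark~\ref{rmk:c=0}), together with exponential convergence of $\Phi$ to its limits as $\xi \to \pm \infty$ guaranteed by the linear stability of $0$ and $1$, the profile $\Phi$ and its gradient decay sufficiently at infinity. Multiplying the profile equation by $\partial_\xi \Phi$ and integrating over $\R \times C$ with $C := (0,L_1) \times \cdots \times (0,L_d)$, the three linear derivative terms vanish after integration by parts in $x$ (using $L$-periodicity) and in $\xi$ (using decay), while $f(x,\Phi) \partial_\xi \Phi = \partial_\xi F(x,\Phi)$ with $F(x,u) := \int_0^u f(x,v)\, dv$ telescopes to give
\begin{equation*}
c^* \int_\R \int_C (\partial_\xi \Phi)^2 \, dx\, d\xi = \int_C \int_0^1 f(x,u)\, du\, dx.
\end{equation*}
Since $\Phi$ is nonconstant, the integral on the left is strictly positive, so \eqref{sign-property} follows.

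\textbf{Monotonicity and uniqueness under the extra hypothesis.} The one-sided monotonicity of $f(x,\cdot)$ near $0$ and $1$ restores the comparison principle in the tails of the wave. Given a wave $U$ and a small $\tau > 0$, I would compare $U^\tau(t,x) := U(t+\tau,x)$ with $U$ by a sliding argument of Berestycki--Nirenberg type: in the two half-infinite strips where both $U$ and $U^\tau$ lie in $[0,\delta]$ or in $[1-\delta,1]$, the monotonicity of $f$ makes the comparison automatic, and in the bounded central region one sweeps a translation parameter using the strong maximum principle. This yields $\partial_t U \ge 0$ when $c^*>0$ and $\partial_t U \le 0$ when $c^*<0$. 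The same sliding scheme applied to two waves with the same speed shows that one is a time-translate of the other, which is the desired uniqueness.
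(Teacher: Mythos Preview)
Your proposal is essentially correct and aligns with the paper's treatment. The paper does not prove Theorem~\ref{th:tw} from scratch but cites the literature: existence and uniqueness of the speed from Ducrot~\cite{ducrot} (with Fang--Zhao~\cite{fz} and Giletti--Rossi~\cite{gr} also mentioned), the sign property from ``a simple integration argument on the equation satisfied by $\Phi$'', and monotonicity/uniqueness from Berestycki--Hamel~\cite{bh2}. Your integration argument for the sign identity is precisely what the paper indicates, and your sliding scheme for monotonicity and uniqueness is the Berestycki--Hamel method the paper invokes.

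The one point worth flagging is your choice of Fang--Zhao~\cite{fz} for existence. You are right that the abstract framework does not take ``every intermediate periodic equilibrium is linearly unstable'' as a direct hypothesis; it needs a counter-propagation or basin-of-attraction condition, and bridging that gap is nontrivial. The paper sidesteps this by pointing primarily to Ducrot~\cite{ducrot}, whose multi-dimensional existence result is proved under exactly the linear-instability hypothesis of Definition~\ref{bistable}. Your plan to translate linear instability into the Fang--Zhao setup via principal eigenfunctions and local sub/supersolutions is workable (and is essentially how Ducrot proceeds), but if you want the shortest path, citing Ducrot directly is what the paper does.
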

More precisely, we refer to \cite[Theorems 1.5, 1.6]{ducrot} for the existence of the pulsating waves and the uniqueness and sign property of the wave speeds. In particular, the sign property~\eqref{sign-property} follows from a simple integration argument on the equation satisfied by $\Phi$. It should be pointed out that this property is only valid under the condition $c^* (e)\neq 0$ (see~\cite{xin2} for an example in dimension $1$ where there exists a nonstationary pulsating wave but the integral of $f$ is equal to $0$). 
The monotonicity and uniqueness results are consequences of~\cite[Theorems 1.11, 1.14]{bh2}  which dealt with a larger class of waves.  We also refer to \cite{dhz1,dhz2} for sufficient conditions on $f$ ensuring the bistable structure of \eqref{eq:main} and more qualitative properties of the pulsating waves in dimension 1. In particular, the one-dimensional nonstationary pulsating wave is globally stable and unique up to time shifts, while the stationary pulsating wave may be neither stable nor unique.

\subsection{Admissible wave speeds and main results} 

The goal of this paper is to describe the set of admissible speeds, i.e. the set of functions
$$\mathbfcal{A} := \{ c : \mathbb{S}^{d-1} \mapsto \R \, : \ \mbox{there exists a spatially periodic bistable equation such that $c^* \equiv c$} \}.$$
In the one-dimensional case $d= 1$, then $\mathbb{S}^{d-1}$ is reduced to the two points $\pm 1$ and $\mathbfcal{A}$ is a subset of $\R^2$. Moreover, it immediately follows from~\eqref{sign-property} that $\mathbfcal{A}$ is also a subset of $\R_+^2 \cup \R_-^2$. Our first main result shows that this is optimal:
\begin{theo}\label{arb-speed-1D}
The set of admissible speeds in spatial dimension~$d= 1$ is
$$\mathbfcal{A} = \R_+^2 \cup \R_-^2 .$$
More precisely, for any $c_L \geq 0$ ({\it resp.} $c_L \leq 0$) and $c_R \geq 0$ ({\it resp.} $c_R \leq 0$), there exists a spatially periodic equation in the sense of Definition~\ref{bistable} which admits a leftward pulsating wave with speed $c_L$, and a rightward pulsating wave with speed $c_R$.
\end{theo}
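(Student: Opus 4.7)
The inclusion $\mathbfcal{A}\subseteq\R_+^2\cup\R_-^2$ is immediate from \eqref{sign-property}, while the origin $(0,0)\in\mathbfcal{A}$ is realized by the balanced homogeneous Nagumo nonlinearity $f(u)=u(1-u)(u-1/2)$, which admits a stationary front. Using the symmetries $u\mapsto 1-u$ (which reverses the sign of $f$, hence of both speeds) and $x\mapsto -x$ (which exchanges $c_L$ and $c_R$), the task reduces to realizing every pair $(c_L,c_R)\in\R_+\times\R_+$, and I will further assume $c_R\geq c_L$ without loss of generality.

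The plan is to construct $f$ on a large period $L$ as an asymmetric patchwork. The elementary brick is a rescaled imbalanced Nagumo reaction $\lambda\,g_\theta(u)$ with $g_\theta(u)=u(1-u)(u-\theta)$ and $\theta\in(0,1/2)$: the associated homogeneous equation admits a rightward traveling front whose speed $c(\theta,\lambda)$ is continuous in $(\theta,\lambda)$, vanishes as $\theta\nearrow 1/2$, and tends to $+\infty$ as $\lambda\to+\infty$, hence covers all of $[0,+\infty)$. Inserting such a brick periodically with an inert spacer of length $\ell$ on which $f$ vanishes preserves the bistable structure, and the pulsating wave speed depends continuously on the parameters and decays to $0$ as $\ell\to+\infty$ by a standard pinning argument. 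To break the reflection symmetry inherent to any two-phase medium — which, being invariant under a shift of its own reflection, forces $c_L=c_R$ — I would then use \emph{three} distinct phases $A$, $B$, $C$ per period: $A$ and $C$ act as ``drivers'' with asymmetric parameters $(\theta_A,\lambda_A)\neq(\theta_C,\lambda_C)$, while $B$ is an inert spacer of tunable length. Since the rightward wave traverses the blocks in the cyclic order $A\to B\to C$ while the leftward one sees $C\to B\to A$, and since the crossing time through a given block depends on the shape of the incoming front (itself dictated by the medium just upstream), the two directional speeds respond differently to changes in the parameters.

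The proof then reduces to: (i) verifying that each resulting $f$ is $C^1$ and bistable in the sense of Definition~\ref{bistable}, in particular that no additional linearly stable $L$-periodic steady state appears — ensured by keeping each Nagumo block in the bistable regime $\theta\in(0,1/2)$ and by restraining the spacer lengths; (ii) invoking the continuity of the map from the parameters to $(c_L,c_R)$, which follows from the stability theory recalled after Theorem~\ref{th:tw} (see \cite{ducrot,dhz1}); and (iii) showing by an intermediate-value / degree argument that this map surjects onto $\R_+^2$. The main difficulty lies in (iii): proving the \emph{independent} tunability of $c_L$ and $c_R$. The mechanism I would aim to engineer is for $A$ to act essentially as a one-sided obstacle — nearly transparent to a front that has already installed $u\approx 1$ on its $B$-side but imposing a long crossing time from the opposite direction — so that its parameters control predominantly one of the two speeds, with an analogously asymmetric role for $C$ controlling the other. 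Rigorously establishing this decoupling, together with the boundary cases $c_L=0<c_R$ (or vice versa), which require a genuine one-sided pinning construction obtained by sending a spacer length to infinity, is the crux of the proof.
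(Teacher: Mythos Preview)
Your proposal is an outline rather than a proof, and you concede as much when you write that ``rigorously establishing this decoupling\ldots is the crux of the proof.'' The difficulty is not cosmetic: your three-phase patchwork is certainly not reflection-invariant, but nothing in your construction shows that it can realize an \emph{arbitrary} ratio $c_R/c_L$, and in particular the boundary case $c_R=0<c_L$ which anchors the whole argument. Your heuristic that ``the crossing time through a given block depends on the shape of the incoming front'' is not a rigorous mechanism: the pulsating wave speed is a global invariant of the equation, not a local quantity assembled from block-by-block transit times, and there is no general principle guaranteeing that tweaking block~$A$ moves $c_R$ more than $c_L$. Likewise, the ``one-sided obstacle'' you would like block $A$ to be --- transparent from one side, blocking from the other --- is precisely the object whose existence is the content of the theorem; you have described it but not built it.

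The paper's key idea, which your proposal lacks, is an explicit construction that \emph{forces} $c_R=0$ while $c_L>0$. Starting from the balanced Allen--Cahn reaction $f_0(u)=u(1-u)(u-1/2)$ with its stationary front $U_0$, one adds a nonnegative perturbation $\sigma\chi(x,u)$ designed to vanish exactly on the graph $\{(x,U_0(x))\}$ and its integer translates: thus $U_0(x)$ remains an exact rightward stationary wave of the perturbed equation, pinning $c_R$ at zero. On the other hand, any reflected shift $U_0(-x+x_0)$ necessarily crosses the support of $\chi$ and becomes a \emph{strict} subsolution, which via a Fife--McLeod sub/supersolution argument forces $c_L>0$. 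With this single asymmetric point in hand, the rest is routine: add a second perturbation $\tau\sigma\chi(-x,u)$, show the speeds depend monotonically and continuously on $\tau$ (Lemmas~\ref{speed-tau-monotone-hd} and~\ref{speed-tau-continue-hd}), observe that at $\tau=1$ symmetry gives $c_L=c_R$, and apply the intermediate value theorem to the ratio $c_R/c_L$ followed by a scaling $x\mapsto\nu x$ to hit any prescribed pair. Your degree argument in step~(iii) could in principle replace this last continuation, but only once you have genuine control over the boundary of the parameter region --- and that control is exactly what the explicit $\chi$-construction provides and your patchwork does not.
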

In other words, both speeds can be chosen almost independently in the one-dimensional case. This was far from obvious a priori. Indeed, when \eqref{eq:main} is of the spatially periodic and monostable Fisher-KPP case, i.e. when $0$ and $1$ are respectively linearly unstable and stable, and $f(x,s)/s$ is a decreasing function of $s >0$, then it is well known (see, e.g., \cite{bh1}) that the minimal wave speed of pulsating waves admits a variational characterization in terms of a family of eigenvalues of an elliptic operator. Even if \eqref{eq:main} is not symmetric, it follows from this characterization that the Fisher-KPP wave speed is the same in the left and right directions in dimension~1 (see for instance~\cite[equation~(10) and Theorem~2.1]{nadin}). Our result shows that the situation is completely different in the bistable case.\\

In higher dimension, we will prove the following result:
\begin{theo}\label{theo-hd}
Let $d\geq 2$. For any $N \geq 2$, any finite set $(\zeta_1, \cdots , \zeta_N)$ of different directions in $\mathbb{S}^{d-1} \cap \Q^d$ and any finite set $(c_1,\cdots  ,c_N)$ of $\R_+^N \cup \R_-^N$, there exists $c \in \mathbfcal{A}$ such that $$c_{|(\zeta_1,\cdots , \zeta_N)} = (c_1,\cdots ,c_N).$$
\end{theo}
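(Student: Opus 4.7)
The plan is to reduce the $d$-dimensional problem to $N$ nearly independent one-dimensional problems, one per direction $\zeta_i$, so that Theorem~\ref{arb-speed-1D} can be invoked individually in each. The rationality assumption is crucial because it allows a single spatial period to be compatible with every chosen direction: writing $\zeta_i = v_i/|v_i|$ with $v_i \in \Z^d$, I would pick $L \in (0,\infty)^d$ such that every $v_i$ belongs to $\mathrm{diag}(L)\Z^d$. Then any $L$-periodic reaction restricts along each line $\R\zeta_i$ to a periodic function, so that the 1D construction is applicable to each direction.

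For each $i$, apply Theorem~\ref{arb-speed-1D} to produce a one-dimensional periodic bistable reaction $f_i(s,u)$ whose pulsating wave has speed $c_i$. Next, carve out $N$ pairwise disjoint thin strips $\Sigma_i$ inside one period cell, each elongated in the direction $\zeta_i$, and set $f(x,u) := f_i(x\cdot \zeta_i, u)$ on $\Sigma_i$, extended $L$-periodically. Outside $\bigcup_i\Sigma_i$, choose $f$ to be strongly bistable with $u = 0$ and $u = 1$ as very stable states and no intermediate stable steady state, so that the complement acts as an essentially inert barrier; smooth cutoffs would bridge the interfaces, and the common sign of the $c_i$ could be matched via \eqref{sign-property} by tuning the sign of the integral of $f$ over a period.

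Once bistability of $f$ is verified, Theorem~\ref{th:tw} yields a unique pulsating wave speed $c^*(\zeta_i)$ in each prescribed direction. The remaining and critical step is to identify $c^*(\zeta_i) = c_i$, which I would attempt via a two-sided comparison using the 1D wave of speed $c_i$, trivially extended in the directions transverse to $\zeta_i$, as both a sub- and a super-solution adapted to $\Sigma_i$, possibly supplemented by a singular limit in which the widths of the inert complement and of the strips $\Sigma_j$ for $j \neq i$ are sent to zero, leaving only $\Sigma_i$ active for the wave in direction $\zeta_i$.

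The main obstacle is precisely this last identification step. The pulsating wave in direction $\zeta_i$ is a global entire solution of \eqref{eq:main}, not confined to $\Sigma_i$, so one must rule out speed corrections from the other strips and from the inert complement. Moreover, for $d \geq 2$ strips aligned with distinct directions must intersect inside every period cell, and one has to combine the corresponding $f_i$ there without creating unwanted intermediate stable steady states; this should be arrangeable using the flexibility of the 1D construction, but the comparison-and-singular-limit analysis together with the bookkeeping near strip intersections is likely the most delicate part of the argument.
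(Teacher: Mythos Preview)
Your proposal has a genuine gap at exactly the point you flag as ``the main obstacle,'' and the difficulty is not merely technical bookkeeping: the strip-localization idea cannot deliver the identification $c^*(\zeta_i)=c_i$. The trivially extended one-dimensional wave $U_i(t,x\cdot\zeta_i)$ solves $\partial_t u=\Delta u+f_i(x\cdot\zeta_i,u)$ on all of $\R^d$, but your $f$ equals $f_i(x\cdot\zeta_i,\cdot)$ only on $\Sigma_i$; elsewhere it is a different bistable reaction (and on $\Sigma_j$, $j\ne i$, it is $f_j(x\cdot\zeta_j,\cdot)$). Hence $U_i$ is in general neither a sub- nor a supersolution of \eqref{eq:main}, and there is no reason for the pulsating speed in direction $\zeta_i$ to coincide with the one-dimensional speed $c_i$. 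The singular limit you suggest (shrinking the other strips and the inert complement) does not rescue this: in the limit the reaction degenerates rather than converging to a fixed $L$-periodic bistable $f$, so at best you would obtain a \emph{sequence} of equations whose speeds in direction $\zeta_i$ approach $c_i$, not a single equation realizing all $N$ speeds simultaneously. Finally, strips aligned with distinct directions intersect in every period cell, so the ``pairwise disjoint'' requirement is incompatible with $L$-periodicity.

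The paper's route is quite different and avoids any spatial localization. It perturbs the balanced Allen--Cahn reaction $f_0$ by a sum
\[
f(x,u;\tau)=f_0(u)+\sigma_*\sum_{j=1}^N \tau_j\prod_{l\ne j}\chi_l(x\cdot\zeta_l,u),
\]
where each $\chi_l$ is designed to vanish on the graph $\{u=U_0(x\cdot\zeta_l)\}$. The product structure is the key trick: setting $\tau_j=0$ removes the only term that does \emph{not} contain the factor $\chi_j$, so $U_0(x\cdot\zeta_j)$ becomes an exact stationary wave and $c^*(\zeta_j,\tau)=0$; when $\tau_j>0$, every shift of $U_0(x\cdot\zeta_j)$ is a strict subsolution, forcing $c^*(\zeta_j,\tau)>0$. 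Combined with monotonicity and continuity of the speeds in $\tau$ (Lemmas~\ref{speed-tau-monotone-hd} and~\ref{speed-tau-continue-hd}), a topological degree argument shows that the map $\tau\mapsto(c^*(\zeta_1,\tau),\dots,c^*(\zeta_N,\tau))$ covers a small hypercube $[0,\eta_*]^N$; a final rescaling yields arbitrary nonnegative $N$-tuples. The point is that the $N$ speeds are tuned \emph{jointly} through a finite-dimensional parameter, not by spatially separating $N$ independent one-dimensional problems.
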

According to this theorem, the speeds can be chosen independently in any number of directions whose coordinates are rational. The only condition is that those speeds should have the same sign (in the large sense), which again is a necessary condition by~\eqref{sign-property}. As a matter of fact, we will prove a slightly more general result where the directions are only assumed to be rationally proportional (see Theorem~\ref{main-prop-hd} in Section~\ref{sec:dd}). Here we wrote this simpler statement for readability.

As we mentioned earlier, another example of a situation where the speed truly depends on the direction was exhibited in~\cite{ducasse} using periodic domains. Here we go further in the understanding of this dependence: indeed the set $\mathbb{S}^{d-1} \cap \Q^d$ is dense in $\mathbb{S}^{d-1}$ (see Lemma~\ref{dense} for details), so that Theorem~\ref{theo-hd} can be understood as some kind of density result of $\mathbfcal{A}$ in the set of functions from $\mathbb{S}^{d-1}$ to $\R$. For instance, for any function $c$, one can find a sequence of bistable type reaction terms $(f_N)_{N \in \mathbb{N}}$ such that $c^*_N (\zeta) = c (\zeta)$ for $\zeta \in S^{d-1} \cap \Q^d$ and $N$ large (depending on $\zeta$), i.e. $\mathbfcal{A}$ is indeed dense with respect to the topology of pointwise convergence in a dense subset of $\mathbb{S}^{d-1}$. Unfortunately, in our construction then either the period $L$ goes to infinity or the reaction term $f_N$ becomes singular as $N \to +\infty$. Still, Theorem~\ref{theo-hd} leads us to conjecture that any continuous speed function may be admissible.\\

The possibility of such arbitrarily distinct speeds may have important consequences on the shape of propagation of solutions. Indeed, let us consider the typical case of a solution of \eqref{eq:main} whose initial data is compactly supported. Provided that 
\begin{equation}\label{speed:positive}
\min_{e \in \mathbb{S}^{d-1}} c^* (e) >0,
\end{equation}
and
$$\lim_{t \to +\infty} u (t,x) = 1\,\,\hbox{ locally uniformly in }\, x\in\R^d,$$
it is expected that the solution spreads in each direction~$e \in \mathbb{S}^{d-1}$ with the speed
\begin{equation}\label{eq:fg}
w^* (e) := \min_{e ' \in \mathbb{S}^{d-1}, e'\cdot e>0} \frac{ c^* (e')}{e'\cdot e}.
\end{equation}
By spreading speed, we mean that
$$\forall\, c > w^* (e), \quad \lim_{t \to +\infty} u (t, x + cte) = 0,$$
$$\forall \,0 \leq c < w^* (e), \quad \lim_{t \to +\infty} u (t, x+ cte) = 1,$$
where both limits are understood to be locally uniform with respect to $x \in \R^d$. This is often referred to as the Freidlin-Gartner formula, since they proved it in the monostable Fisher-KPP case~\cite{fg}; we also mention~\cite{rossi} for a proof in a bistable particular case.

Denoting by $(e_1,\cdots ,e_d)$ the standard basis, it follows from Theorem~\ref{theo-hd} that there is a situation where
$$w^* (e_1) < c^* (e_1).$$
Moreover, one may easily check from our proof that one can simultaneously make the equation~\eqref{eq:main} symmetrical with respect to the other directions $e_2$, ... , $e_d$, so that the minimum in the definition of $w^* (e_1)$ is reached simultaneously in two distinct directions $e$ and $e'$ with $(e - e') \cdot e_1 = 0$. In that case, the distance grows linearly in time between the respective level sets of the pulsating wave with direction~$e_1$ and the solution with compactly supported initial data, a new phenomenon which does not occur in a homogeneous equation. Furthermore, an angle appears in the large time behavior of the solution, more precisely in the asymptotic set of spreading $ \{ r\xi \, : \ 0 \leq r \leq w^* (\xi) \}$, and in particular the latter is not smooth. This also suggests that some spatially periodic bistable equations admit a new type of conical waves, where the superlevel sets (i.e., under \eqref{speed:positive}, the zones where the solution is closer to the invading steady state) are convex (contrary to the situation in~\cite{hmr,nt}).

As we will discuss below, these variations in the pulsating wave speeds also lead to strongly asymmetrical situations in the multistable framework.

\subsection{Asymmetric propagating terraces in the multistable case}

In this subsection, we consider a more general situation where \eqref{eq:main} may admit more than two stable steady states. On the other hand, for simplicity we restrict ourselves to one spatial dimension, i.e. $d=1$, and only briefly discuss the higher dimension case. Let us thus introduce the following notion:
\begin{defi}\label{def:multistable}
We say that \eqref{eq:main} is of the spatially periodic \textbf{multistable} type if $f \in C^1 (\R^{2} ; \R) $ and:
\begin{itemize}
\item there exists $L>0$ such that $ f (x + L, u) \equiv f (x, u )$;
\item there exists a finite sequence $p \equiv p_0 > p_ 1 > \cdots > p_I \equiv 0$ of linearly stable $L$-periodic steady states;
\item for each $1 \leq k \leq I$, there exists a pulsating wave connecting $p_k$ and $p_{k-1}$.
\end{itemize}
\end{defi}
We point out that the definition of a pulsating wave connecting $p_{k}$ to $p_{k-1}$ is simply obtained by replacing 0 and 1 by $p_k$ and $p_{k-1}$ in Definition~\ref{pulsating-tw}. Provided that its speed is non-zero, then such a pulsating wave is unique up to time shifts by the same argument as in the bistable case; this is also a particular case of Lemma~\ref{terrace-minimal} below.

Let us mention that other definitions of multistability are used in the literature. For instance, the above is not the same as the definition used by one of the authors in~\cite{gr}. Our definition typically includes the stacking of several bistable type reaction terms with the same period. We stress that such notions of multistability should be handled carefully: for instance, we point out that Definition~\ref{def:multistable} allows for the existence of stable steady states which do not belong to the sequence $(p_i)_{0 \leq i \leq I}$. 

In such a situation, the notion of a single pulsating wave may no longer be enough to describe the transition between the extremal steady states 0 and $p$. It must be replaced by the following notion of a propagating terrace, or stacked fronts:
\begin{defi}\label{terrace}
A {\bf propagating terrace} connecting $0$ and $p$ in the right ({\it resp.} left) direction is a pair of finite sequences $(q_k)_{0\leq k\leq N}$ and $(U_{k},c_{k})_{1\leq k\leq N}$ such that
\begin{itemize}
\item[$(i)$] each $q_k$ is a nonnegative periodic steady state of \eqref{eq:main} satisfying 
$$p\equiv q_0>q_1>\cdots> q_{N}\equiv 0;$$
\item[$(ii)$] for each $1\leq k\leq N$, the function $U_{k}(t,x)$ is a rightward ({\it resp.} leftward) pulsating wave of \eqref{eq:main} connecting $q_k$ and $q_{k-1}$ with speed $c_{k}\in\R$;
\vskip 3pt
\item[$(iii)$] the sequence $(c_k)_{1\leq k\leq N}$ satisfies $c_1\leq c_2\leq \cdots \leq c_N$. 
\end{itemize}
We denote such a propagating terrace by $\mathcal{T}:= ((U_{k},c_{k})_{1\leq k\leq N}, (q_k)_{0\leq k\leq N})$ and call $(q_k)_{0\leq k\leq N}$ the {\bf platforms} of $\mathcal{T}$. 
\end{defi}
The notion of a propagating terrace was introduced in the spatially periodic case by one of the authors in~\cite{dgm}, under the additional assumption that all speeds are positive. It was then extended to a situation where the speeds may have different signs in~\cite{gm}, and in~\cite{gr} to the higher dimensional case. It also appeared in the much earlier works of Fife and McLeod~\cite{fm1,fm2} under the name of ``minimal decomposition'', but only in the homogenenous case where ODE technics are available. We mention that the convergence to propagating terraces from various types of initial data was extensively studied recently in the homogeneous and time-periodic cases (see, e.g., \cite{dm2,dum2,po1,po2}). 

As we will recall in Section~\ref{sec:terraces}, it was shown in~\cite{gm} that terraces exist in a general spatially periodic framework, including the case when \eqref{eq:main} is multistable in the sense of Definition~\ref{def:multistable} (see Lemma~\ref{terrace-minimal} below). Moreover, the propagating terrace is also unique provided that all its speeds are non-zero; by uniqueness of the propagating terrace, we mean that all terraces share the same platforms and that between two consecutive platforms the pulsating wave is unique up to time shift. However, as was discussed in~\cite{gm}, in the case when some speed is equal to 0, even the number of platforms $N+1$ may no longer be unique.

In the following theorem, we exhibit various examples to show that the shapes of the terraces (i.e. not only the speeds of pulsating fronts but also the intermediate platforms or even their number) may be completely different in two opposite directions.

\begin{theo}\label{nonsysm-terrace}
Let $d=1$ and $N\geq 1$ be an arbitrary integer. The following statements hold true.
\begin{itemize} 
\item[$(i)$]
There exists a spatially periodic reaction $f(x,u)$ such that \eqref{eq:main} has a unique rightward propagating terrace $((U_{R,k},c_{R,k})_{1\leq k\leq N},(q_{R,k})_{0\leq k\leq N})$ connecting $0$ and $1$, and a unique leftward propagating terrace  $((U_{L,k},c_{L,k})_{1\leq k\leq N},(q_{L,k})_{0\leq k\leq N})$ connecting $0$ and $1$ such that 
$$q_{R,k}=q_{L,k}\,\,\hbox{ for each }\, 0\leq k\leq N, \quad\hbox{and}\quad   c_{R,k} \neq  c_{L,k} \,\hbox{ for each }\,  1\leq k\leq N. $$

\item[$(ii)$] There exists a spatially periodic reaction $f(x,u)$ such that \eqref{eq:main} has a unique rightward propagating terrace $((U_{R,k},c_{R,k})_{1\leq k\leq N},(q_{R,k})_{0\leq k\leq N})$ connecting $0$ and $1$, and a unique leftward propagating terrace  $((U_{L,k},c_{L,k})_{1\leq k\leq N},(q_{L,k})_{0\leq k\leq N})$ connecting $0$ and $1$ such that 
$$\{q_{R,k}:\, k=1,2,\cdots,N-1 \} \cap \{q_{L,k}:\, k=1,2,\cdots,N-1 \} = \emptyset.$$

\item[$(iii)$] There exists a spatially periodic reaction $f(x,u)$ such that \eqref{eq:main} has a unique leftward  propagating terrace 
$((U_{L,k},c_{L,k})_{1\leq k\leq N},(q_{L,k})_{0\leq k\leq N})$ connecting $0$ and $1$, and a unique rightward propagating terrace consisting of a single front $(U_{R},c_{R})$ connecting~$0$ and $1$.
\end{itemize}
\end{theo}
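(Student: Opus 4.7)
The common thread is a \emph{stacking construction} powered by Theorem~\ref{arb-speed-1D}. Fix constants $0 = p_M < p_{M-1} < \cdots < p_0 = 1$ and a period $L > 0$. For prescribed same-sign target speeds $c_{L,k}, c_{R,k}$, Theorem~\ref{arb-speed-1D}, applied after the affine rescaling $u \mapsto \frac{u - p_k}{p_{k-1} - p_k}$, yields an $L$-periodic bistable reaction $g_k$ on $\R \times [p_k, p_{k-1}]$ whose bistable pulsating waves realize these speeds. Gluing the $g_k$'s into a $C^1$ function $f(x,u)$ on $\R \times [0,1]$ (smoothly extended outside) produces a reaction for which $p_0, \ldots, p_M$ are linearly stable $L$-periodic steady states, and the bistable wave between consecutive $p_k$'s has the prescribed directional speeds. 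Existence of terraces in each direction is granted by Lemma~\ref{terrace-minimal}, and uniqueness by~\cite{gm} whenever all speeds are nonzero.

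For part~(i), take $M = N$ and choose positive strictly increasing sequences $c_{L,1} < \cdots < c_{L,N}$ and $c_{R,1} < \cdots < c_{R,N}$ with $c_{L,k} \neq c_{R,k}$ for each $k$. The ordering requirement of Definition~\ref{terrace} is then satisfied in both directions, so the unique terrace in each direction is the stacked $N$-step one with shared platforms $(p_k)_{0\leq k\leq N}$ and directionally distinct speeds.

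For part~(iii), keep $M = N$ and positive strictly increasing leftward speeds, but choose strictly \emph{decreasing} positive rightward speeds $c_{R,1} > \cdots > c_{R,N} > 0$. The leftward $N$-step terrace is valid as in part~(i). In the rightward direction, any candidate terrace whose platforms include some $p_k$ ($1 \leq k \leq N-1$) must have at least two sub-fronts whose speeds are either individual bistable speeds $c_{R,j}$ or, in the case of merged ``big waves'', intermediate between the merged $c_{R,j}$'s; a direct inspection then shows that the strictly decreasing condition forces the sub-speeds to violate the ordering in Definition~\ref{terrace}. Hence the unique rightward terrace consists of a single pulsating front connecting $0$ and $1$. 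The main analytical obstacle is producing this ``big'' wave: I would adapt a Fife--McLeod type merging argument to the periodic setting by considering the Cauchy problem with an initial datum composed of $N$ compressed stacked bistable fronts and showing that, as $t \to +\infty$, the faster upper fronts catch up to the slower lower ones and the solution converges to a pulsating wave from $0$ to $1$. Uniqueness of the resulting terrace then follows from \cite{gm,bh2}.

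Part~(ii) is the most involved. The plan is to take $2(N-1)$ intermediate stable platforms partitioned into two disjoint, interleaved chains $(r_k)_{k=1}^{N-1}$ and $(s_k)_{k=1}^{N-1}$ in $(0,1)$, and build $f$ by the stacking construction with $M = 2N-1$. Using Theorem~\ref{arb-speed-1D} at each layer, prescribe the directional bistable speeds so that the rightward terrace ordering is satisfied along the $(r_k)$-chain but fails whenever any $s_k$ would appear as a platform, and symmetrically for the leftward direction. Then, applying the reasoning of part~(iii) within each such gap, the rightward minimal terrace jumps over every $s_k$ via a merged pulsating wave and uses the $r_k$'s as platforms, while the leftward terrace does the converse, producing disjoint intermediate platform sets. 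The main obstacle, inherited from part~(iii), is the rigorous production of these merged ``skipping'' pulsating waves at each level and the verification via \cite{gm,bh2} that the resulting decomposition is indeed the unique terrace in each direction.
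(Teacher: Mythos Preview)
Your stacking construction and choices of speed orderings are exactly those of the paper, and part~(i) is essentially identical. However, there are two points where the paper's argument diverges from yours and is considerably simpler.

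\textbf{The merged waves come for free.} In parts~(ii) and~(iii) you identify the construction of the ``big'' merged pulsating waves as the main analytical obstacle and propose a Fife--McLeod type large-time argument. The paper avoids this entirely. Once the stacked reaction is multistable in the sense of Definition~\ref{def:multistable}, Lemma~\ref{decom-terrace} (from~\cite{gm}) guarantees \emph{a priori} that a propagating terrace exists in each direction, with platforms contained in $\{p_k\}$. One then only needs to identify \emph{which} $p_k$ appear, and this is done via the comparison principle of Lemma~\ref{steep-speed-bis}: if $U$ is any pulsating wave connecting some $q_k$ to $p$, its speed is bounded above or below by the bistable speed across the adjacent layer. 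For instance, in part~(iii), if the rightward terrace had an interior platform $p_k$, then $c'_{R,1} \geq c_{R,k} > c_{R,k+1} \geq c'_{R,2}$, contradicting the terrace ordering; hence the terrace is a single front, whose existence was already granted by Lemma~\ref{decom-terrace}. No direct construction of the merged wave is needed. The same mechanism handles the skipping in part~(ii), where the paper proceeds by induction (starting from $I=3$ for $N=2$ and adding two layers at each step) rather than setting up the full interleaved picture at once.

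\textbf{The common period.} You fix a period $L$ in advance and assert that Theorem~\ref{arb-speed-1D} delivers an $L$-periodic reaction realizing the prescribed speeds. But the proof of Theorem~\ref{arb-speed-1D} involves a final rescaling $x \mapsto \nu x$ with $\nu$ depending on the target speeds, so the period is \emph{not} freely prescribed. The paper handles this by first obtaining reactions $f_k$ with possibly distinct periods $L_k$, then using a continuity-in-period result (Lemma~\ref{continuity-L}, from~\cite{dhz1}) to perturb each $L_k$ to a nearby rational number without destroying the required strict speed inequalities; the stacked reaction is then periodic with the least common multiple. You should flag this step.
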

Statement~$(i)$ is a straightforward extension of Theorem~\ref{arb-speed-1D} to the multistable case. Yet the other two statements show some of the much stronger asymmetries which may arise in the shape of propagation of solutions of spatially periodic reaction-diffusion equations. On the one hand, statement $(ii)$ shows that the leftward and rightward terraces may share no platform in common (aside of course from the extremal steady states 0 and 1), regardless of their size. On the other hand, statement $(iii)$ shows that even if the rightward terrace consists in a single front, the leftward terrace may involve an arbitrarily large number of platforms. It is of course possible to mix these two situations to construct even more involved asymmetries.

Regarding the higher dimension case, we also refer to~\cite{gr} where propagating terraces with different shapes in two orthogonal directions were exhibited in dimension 2. Those terraces were found by perturbing a homogeneous reaction term by a periodic function in one direction and keeping the orthogonal direction homogeneous. However, our analysis suggests that such asymmetry may occur even in directions which are not orthogonal, which recalling the Freidlin-Gartner formula~\eqref{eq:fg} above may be more significant in the large-time asymptotics of solutions of the Cauchy problem. This will be addressed in a future work~\cite{giletti-rossi-wip}.

\section{Preliminaries: Properties of pulsating waves}\label{sec:prelim}

In this section, we consider an arbitrary dimension $d \geq 1$ and show some properties of pulsating waves of the spatially periodic bistable equation \eqref{eq:main}. These properties will be used in proving Theorems \ref{arb-speed-1D} and \ref{theo-hd} in later sections.  

We assume that \eqref{eq:main} is of the bistable type in the sense of Definition~\ref{bistable}, where the period vector $L = (L_1, \cdots , L_d)$ is fixed throughout this section. Moreover, we will assume that $f$ is spatially homogeneous on a small neighborhood of $u=0$ and $u=1$, that is, there exists $\delta\in (0,1/2)$ such that 
\begin{equation}\label{homo-neighbor}
f(x,u)=f(u) \,\hbox{ for all }\, x\in\R^d,\, u\in [-\delta,\delta] \cup [1-\delta,1+\delta].
\end{equation}
Notice that these assumptions in particular imply that there exist
$\delta_0 \in (0,\delta)$ and $\mu>0$ such that 
\begin{equation}\label{mu0-delta0} 
f'(u) \leq -\mu  \,  \hbox{ for all }\, u\in [-\delta_0,\delta_0]\cup [1-\delta_0,1+\delta_0].
 \end{equation}
Therefore, the function $u\mapsto f(x,u)$ is decreasing in $[-\delta_0,\delta_0]$ and in $[1-\delta_0,1+\delta_0]$. Then 
by Theorem \ref{th:tw}, for each direction $e \in \mathbb{S}^{d-1}$, there exists a unique speed $c^*(e)$ such that 
\eqref{eq:main} has a pulsating wave $U(t,x)$ with speed $c^*(e)$ in the direction $e$, and $U(t,x)$ is strictly monotone in $t$ provided that $c^*(e)\neq 0$.  

In our discussion below, the direction $e \in \mathbb{S}^{d-1}$ is fixed. Thus by pulsating wave we always mean one which moves in the direction $e$, and for convenience we denote~$c^*:=c^*(e)$.

\subsection{Monotonicity of pulsating wave speeds}

This subsection is concerned with the monotonicity of the wave speed $c^*$ with respect to the nonlinearity $f$. More precisely, we have the following lemma.

\begin{lem}\label{speed-tau-monotone-hd} 
Assume that $f_1$ and $f_2$ are bistable nonlinearities in the sense of Definition~\ref{bistable} with the same period vector~$L$, and that both satisfy \eqref{homo-neighbor}. Let $U_1$ ({\it resp.} $U_2$) be a pulsating wave of \eqref{eq:main} with $f=f_1$ ({\it resp.} $f=f_2$). 

If $f_1 \leq f_2$ in $\R^{d+1}$, then $c_1^* \leq c_2^*$, where $c_1^*$ and $c_2^*$ are the respective speeds of $U_1$ and $U_2$. 
\end{lem}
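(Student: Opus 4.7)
The plan is to argue by contradiction: assume $c_1^* > c_2^*$, and derive a contradiction using the parabolic comparison principle. The starting observation is that, because $f_1 \le f_2$, the pulsating wave $U_1$ is a subsolution of the equation $\partial_t u = \Delta u + f_2(x, u)$ governing $U_2$:
$$\partial_t U_1 - \Delta U_1 - f_2(x, U_1) = f_1(x, U_1) - f_2(x, U_1) \le 0.$$
The strategy is then to dominate $U_1$ globally by a supersolution built from $U_2$, and exhibit a violation of this domination at large times forced by $c_1^* > c_2^*$.

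I would look for a supersolution of the classical Fife--McLeod form
$$w(t,x) := U_2\bigl(t + \tau(t),\,x\bigr) + \eps\,e^{-\mu t}, \quad \tau(t) := \tau_0 + \frac{A\eps}{\mu}\bigl(1 - e^{-\mu t}\bigr),$$
for a small $\eps > 0$, a large shift $\tau_0$, a constant $A > 0$ to be chosen, and $\mu$ from \eqref{mu0-delta0}. A direct computation gives
$$\partial_t w - \Delta w - f_2(x, w) = A\eps e^{-\mu t}\,\partial_t U_2 - \mu\eps e^{-\mu t} + \bigl[f_2(x, U_2) - f_2(x, U_2 + \eps e^{-\mu t})\bigr].$$
Assuming first $c_2^* > 0$ (so that $\partial_t U_2 > 0$ by Theorem~\ref{th:tw}), I would split according to whether $U_2(t+\tau(t),x)$ lies in one of the stable neighborhoods $[-\delta_0, \delta_0] \cup [1 - \delta_0, 1 + \delta_0]$ or in the intermediate range $[\delta_0, 1-\delta_0]$. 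In the stable neighborhoods, \eqref{mu0-delta0} guarantees that the bracketed term is at least $\mu\eps e^{-\mu t}$, which absorbs the negative middle term. On the transition range, which is compact in the wave variable, $\partial_t U_2$ admits a positive lower bound $\kappa$ thanks to the $L$-periodicity of the profile and its strict monotonicity in $t$, while the bracketed term is controlled by $K\eps e^{-\mu t}$ via the Lipschitz norm of $f_2$; choosing $A \ge (K + \mu)/\kappa$ then makes $w$ a supersolution on $[0, \infty) \times \R^d$.

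The initial ordering $w(0, \cdot) \geq U_1(0, \cdot)$ is secured by taking $\tau_0$ large: the transition layer of $U_2(\tau_0, \cdot)$ sits near $\{x \cdot e = c_2^* \tau_0\}$, which can be pushed arbitrarily far to the right, while $U_1(0, x)$ is close to $0$ outside some half-space to the left; the added constant $\eps$ absorbs the remaining region where both waves are simultaneously close to $1$. The comparison principle then yields $U_1(t, x) \le w(t, x)$ for all $t \ge 0$, $x \in \R^d$. Letting $t \to +\infty$, $\tau(t)$ tends to a finite limit $\tau_\infty := \tau_0 + A\eps/\mu$ and the correction vanishes, hence $U_1(t, x) \le U_2(t + \tau_\infty, x) + o(1)$. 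If $c_1^* > c_2^*$, then for $t$ sufficiently large one can find $x \in \R^d$ with $c_2^*(t + \tau_\infty) + R < x \cdot e < c_1^* t - R$; at such points, $U_1(t, x)$ is arbitrarily close to $1$ while $U_2(t + \tau_\infty, x)$ is arbitrarily close to $0$, yielding the sought contradiction.

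The main technical obstacle is the supersolution estimate in the transition regime, and specifically the uniform lower bound $\inf \partial_t U_2 \geq \kappa > 0$ on the set where $U_2 \in [\delta_0, 1 - \delta_0]$. This uses the strict monotonicity in time together with $L$-periodicity: although the relevant set is unbounded in $\R^d$, it is invariant modulo the action of $L\Z^d$ combined with the time shifts in~\eqref{formula-period}, which brings it back to a compact situation. The cases $c_2^* \le 0$ are handled by symmetric variants (for $c_2^* < 0$, a time-reversed shift in $w$; for $c_2^* = 0$, a stationary version $w = U_2 + \eps e^{-\mu t}$ where the contradiction comes from the motion of $U_1$ instead).
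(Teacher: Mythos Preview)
Your overall strategy coincides with the paper's: argue by contradiction, build a Fife--McLeod barrier from one of the two waves, and use the comparison principle together with the speed gap to reach an impossibility. The paper phrases it slightly differently---it treats $U_2$ as a supersolution of the $f_1$-equation and builds a \emph{subsolution} from $U_1$ (via Lemma~\ref{sub-super-nonzero}) rather than treating $U_1$ as a subsolution of the $f_2$-equation and building a supersolution from $U_2$---but these are mirror images of the same idea, and your treatment of the main case $c_2^*>0$ is correct.

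There is, however, a genuine gap in your handling of $c_2^*=0$. The proposed barrier $w=U_2+\eps e^{-\mu t}$ is \emph{not} a supersolution of the $f_2$-equation: computing
$$\partial_t w-\Delta w-f_2(x,w)=-\mu\eps e^{-\mu t}+\bigl[f_2(x,U_2)-f_2(x,U_2+\eps e^{-\mu t})\bigr],$$
the bracket is only bounded below by $-K\eps e^{-\mu t}$ on the transition set $\{U_2\in[\delta_0,1-\delta_0]\}$, and there is no $\partial_t U_2$ term available to absorb this error since $U_2$ is stationary. So the comparison $U_1\leq w$ cannot be established, and the contradiction you describe (``from the motion of $U_1$'') never gets off the ground.

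The fix is to change your case split. The paper splits according to which of $c_1^*$, $c_2^*$ is nonzero (at least one must be, since $c_1^*>c_2^*$) and always builds the Fife--McLeod barrier from the wave with nonzero speed, where strict time-monotonicity furnishes the needed drift term. Concretely, when $c_2^*=0$ one has $c_1^*>0$; then swap roles: view $U_2$ as a supersolution of the $f_1$-equation (since $f_1\leq f_2$), build a subsolution $U_1(t-\tau(t),x+kL)-\eps e^{-\mu t}$ from~$U_1$, and derive the contradiction from $U_2\geq U_1(t-s_0\eps,\cdot)$ at large times. This is exactly symmetric to what you did for $c_2^*>0$, just with the roles of the two waves exchanged.
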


To prove Lemma \ref{speed-tau-monotone-hd}, we need the following comparison result. 

\begin{lem}\label{sub-super-nonzero}
Let $U(t,x)$ be a pulsating wave of \eqref{eq:main} with speed $c^* \neq 0$ and let $\delta_0>0$, $\mu>0$ be such that \eqref{mu0-delta0} holds. Then there exists $s_0\in \R$, which has the sign of $c^*$, such that if for some $(k_1, \cdots, k_d) \in \mathbb{Z}^d$ and some $\varepsilon \in (0,\delta_0/2]$, 
$$u_0(x)\geq  U(0,x+ kL)-\varepsilon \,\,\hbox{ for } \,\, x\in\R^d,$$
$$({\it resp.} \quad u_0(x)\leq  U(0,x+ kL)+\varepsilon \,\,\hbox{ for } \,\, x\in\R^d),$$
then 
$$ u(t,x;u_0) \geq  U(t-s_0\varepsilon, x+kL)-\varepsilon e^{-\mu t}\,\, \hbox{ for }\,\, t\geq 0,\, x\in\R^d,$$
$$({\it resp.} \quad u(t,x;u_0) \leq  U(t+s_0\varepsilon,x+kL)+\varepsilon e^{-\mu t}\,\, \hbox{ for }\,\,t\geq 0,\, x\in\R^d,)$$
where $u(t,x;u_0)$ is the solution of~\eqref{eq:main} with initial function $u_0$ and $kL = (k_1 L_1, \cdots, k_d L_d)$.
\end{lem}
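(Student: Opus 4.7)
The plan is to construct explicit super- and sub-solutions of the form
$$v^\pm (t,x) = U\bigl(t \pm \sigma(t), x + kL\bigr) \pm \varepsilon e^{-\mu t},$$
with $\sigma(t) := |s_0|\,\varepsilon (1 - e^{-\mu t})$ a bounded non-negative time shift, and then apply the parabolic comparison principle. I will focus on the supersolution case with $c^{*} > 0$; the other cases are symmetric (notably, when $c^{*} < 0$ the wave is strictly decreasing in $t$, and one takes $s_0 < 0$ so that $\mathrm{sign}(s_0) = \mathrm{sign}(c^*)$).

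First, I would record two preliminary facts. The initial inequality is automatic: since $c^{*} > 0$ implies $U(\cdot,x)$ is nondecreasing in $t$, we have $v^{+}(0,x) = U(0,x+kL) + \varepsilon \geq u_0(x)$. Next, using that $U(t,x) = \Phi(x \cdot e - c^{*}t, x)$ with $\Phi$ periodic in $x$ and strictly monotone in $\xi$, a compactness argument on one period combined with the strong maximum principle applied to the equation satisfied by $\partial_t U$ yields a constant $\alpha > 0$ such that
$$\partial_t U(t,x) \geq \alpha \quad \text{whenever } U(t,x) \in [\delta_0/2,\, 1 - \delta_0/2].$$
This uniform positivity in the intermediate layer is the place where the hypothesis $c^* \neq 0$ is crucial, and it is the main technical obstacle of the argument.

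Now set $V(t,x) := U(t + \sigma(t), x + kL)$ and compute, using periodicity of $f$ in $x$,
$$\partial_t v^+ - \Delta v^+ - f(x,v^+) = \sigma'(t)\,\partial_t U(t+\sigma(t), x+kL) - \mu \varepsilon e^{-\mu t} - \bigl[f(x, V + \varepsilon e^{-\mu t}) - f(x,V)\bigr].$$
Writing the bracket as $f_u(x,\xi)\,\varepsilon e^{-\mu t}$ via the mean value theorem, I split into two cases. If $V \in [0,\delta_0/2] \cup [1 - \delta_0/2,\,1]$, then since $\varepsilon \leq \delta_0/2$ the intermediate value $\xi$ lies in a region where $f_u \leq -\mu$ by \eqref{mu0-delta0}, so $-\mu - f_u(x,\xi) \geq 0$; combined with $\sigma' \geq 0$ and $\partial_t U \geq 0$, the right-hand side is nonnegative. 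If instead $V \in (\delta_0/2,\, 1 - \delta_0/2)$, I use the lower bound $\partial_t U \geq \alpha$ and the global bound $|f_u| \leq K := \|\partial_u f\|_\infty$ to require
$$\sigma'(t)\,\alpha \geq (\mu + K)\,\varepsilon e^{-\mu t},$$
which with $\sigma'(t) = |s_0|\varepsilon \mu e^{-\mu t}$ holds as soon as $|s_0| \geq (\mu + K)/(\mu \alpha)$. Fixing such an $s_0$ (with the sign of $c^*$) makes $v^+$ a genuine supersolution on $\{t \geq 0\}$.

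Finally, the parabolic comparison principle, valid because $f$ is $C^1$ and the perturbation is bounded, gives $u(t,x;u_0) \leq v^+(t,x)$. Since $0 \leq \sigma(t) \leq s_0 \varepsilon$ and $U$ is nondecreasing in $t$, we bound $v^+(t,x) \leq U(t + s_0\varepsilon, x+kL) + \varepsilon e^{-\mu t}$, which is the claimed inequality. The subsolution statement is obtained by the mirror construction $v^{-}(t,x) = U(t - \sigma(t), x + kL) - \varepsilon e^{-\mu t}$, where the same three-step verification (initial inequality via monotonicity of $U$, splitting into the two cases around $V$, application of the comparison principle) applies verbatim.
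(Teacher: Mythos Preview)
Your proof is correct and follows essentially the same Fife--McLeod sub/supersolution approach as the paper: you build $v^\pm(t,x)=U(t\pm\sigma(t),x+kL)\pm\varepsilon e^{-\mu t}$ with $\sigma(t)=|s_0|\varepsilon(1-e^{-\mu t})$, split according to whether $V$ lies in the boundary layers or the intermediate zone, and use the strict positivity of $\partial_t U$ on $\{\delta_0/2\le U\le 1-\delta_0/2\}$ to absorb the reaction error there. The paper carries out the subsolution case explicitly (with your $\alpha$, $K$, $\sigma$ renamed $\rho$, $C$, $\eta$) and arrives at the same choice $s_0=(C+\mu)/(\mu\rho)$, so the two arguments are essentially identical up to notation and which of the two symmetric cases is written out in full.
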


\begin{proof}
We use the Fife-McLeod type super- and subsolution method to prove Lemma \ref{sub-super-nonzero}.
We only give the construction of a subsolution, as the analysis for a supersolution is identical.  Without loss of generality, we assume that $c^*>0$ (the case where $c^*<0$ can be treated analogously).

Set
$$v(t,x):=U(t-\eta(t),x+kL)-q(t)\ \ \hbox{ for } \ \  t>0,\, x\in\R^d,$$
where $\eta$ and $q$ are $C^1([0,\infty))$ functions such that 
$$\eta(0)=0,\ \  \eta'(t)>0\, \hbox{ for all }\, t\geq 0;  \quad q(0)=\varepsilon,\ \  0<q(t)\leq q(0) \,\hbox{ for all }\, t\geq 0.$$ 
By choosing some appropriate functions $\eta(t)$ and $q(t)$ later, we will show that $v(t,x)$ is a subsolution of \eqref{eq:main}. To do so, for all $(t,x)\in(0,\infty)\times\R^d$,  we define
$$N(t,x):=\partial_tv(t,x)-\Delta v(t,x) -f(x,v(t,x)).$$ 
Since $U(t,x)$ is an entire solution of \eqref{eq:main} and thanks to the spatial periodicity, a straightforward calculation gives that for all $(t,x)\in(0,\infty)\times\R^d$,
$$N(t,x)=-q'(t)-\eta'(t)\partial_tU(t-\eta(t),x+kL)+f(x,U(t-\eta(t),x+kL))-f(x,v(t,x)).$$

Now we choose suitable functions $q(t)$ and $\eta(t)$ such that $N(t,x)\leq 0$ for all $(t,x)\in(0,\infty)\times\R^d$. 
Noticing that $U$ is a pulsating wave with positive speed, we have that $\partial_t U>0$ (see Theorem~\ref{th:tw}). 
Then, on the one hand, for $(\varepsilon,t,x)\in (0,\delta_0/2]\times(0,\infty)\times\R^d$ with $U(t-\eta(t),x+kL))\in [0,\delta_0/2]\cup [1-\delta_0/2,1]$, it follows from \eqref{homo-neighbor} and \eqref{mu0-delta0} that 
$$f(x,U(t-\eta(t),x+kL))-f(x,v(t,x))) =f(U(t-\eta(t),x+kL))-f(v(t,x)))   \leq -\mu q(t),$$
and hence,  
$$ N(t,x)\leq - \eta'(t)\partial_tU(t-\eta(t),x+kL)-q'(t)-\mu q(t) \leq -q'(t)-\mu q(t)$$
(the last inequality follows from the fact that $\partial_tU>0$ and $\eta'(t)>0$ for all $t\geq 0$). 

On the other hand, due to the monotonicity of $U$ in its first variable and the characterization of pulsating waves, one can check that
$$\rho:=\min_{\delta_0/2\le U(t,x)\le1-\delta_0/2}\partial_t U(t,x)>0.$$
It follows that, if $(t,x)\in(0,\infty)\times\R^d$ with $U(t-\eta(t),x+kL)\in [\delta_0/2, 1-\delta_0/2]$, then 
$$N(t,x) \leq -\rho\,\eta'(t)-q'(t)+Cq(t), $$
where $C=\max\{| \partial_uf(x,u)|: x\in [0,L_1]\times\cdots\times[0,L_d], u \in [0,1]\}$.

Let us then choose $q(t)$ and $\eta(t)$ such that
$$ q(0)=\varepsilon,\quad q'(t)=-\mu q(t) \,\,\hbox{ for all }\,\, t\ge0,$$
and 
 $$ \eta(0)=0,\quad \eta'(t)=\frac{C+\mu}{\rho}q(t)\,\, \hbox{ for all }\, \,t\ge0.$$ 
Namely, 
$$q(t)=\varepsilon e^{-\mu t} \, \hbox{ for } \, t\ge0,\quad\hbox{and} \quad  \eta(t)=\frac{\varepsilon(C+\mu)(1-e^{-\mu t})}{\mu\rho} \,\hbox{ for } \,  t\ge0.$$
It is easily checked from the above that $N(t,x)\leq 0$ for all $(t,x)\in (0,\infty)\times \R^d$.

Finally, by the comparison principle, we have that $u(t,x;u_0) \geq v(t,x) $ for  $t\geq 0$, $x\in\R^d$. Taking $s_0=(C+\mu)/(\mu\rho)$ and by the monotonicity of $U$, we easily derive the desired result. The proof of Lemma \ref{sub-super-nonzero} is thus complete.   
\end{proof}

\begin{proof}[Proof of Lemma \ref{speed-tau-monotone-hd}]
Assume by contradiction that $c_1^* > c_2^*$. Then either $c_1^*\neq 0$ or $c_2^*\neq 0$. 

Let us first derive a contradiction in the case where $c_1^*\neq 0$. Let $\delta_0>0$ be such that~\eqref{mu0-delta0} holds. Since $U_1(t,x)$ and $U_2(t,x)$ are pulsating waves connecting $0$ and $1$ in the same direction, for any
$\varepsilon\in (0,\delta_0/2]$, one can find some $(k_1, \cdots, k_d) \in \Z^d$ such that 
$$U_2(0,x) \geq U_1(0,x+kL)-\varepsilon\,\,\hbox{ for }\,\,x\in\R^d,$$
where $k L = (k_1 L_1 ,\cdots ,k_d L_d)$. Clearly, $U_2(t,x)$ is the solution of equation \eqref{eq:main} with $f=f_2$ and initial function $U_2(0,x)$. Let $u(t,x)$ be the solution of \eqref{eq:main} with $f=f_1$ and initial function $U_1(0,x+kL)-\varepsilon$. 
On the one hand, since $f_1\leq f_2$ in $\R^{d+1}$, it follows from the comparison principle that 
$$U_2(t,x) \geq  u(t,x)\,\,\hbox{ for all }\,\, t\geq 0,\, x\in\R^d.$$
On the other hand, by applying Lemma \ref{sub-super-nonzero} to equation \eqref{eq:main} with $f=f_1$, we obtain 
$$u(t,x) \geq  U_1(t-s_0\varepsilon, x+kL)-\varepsilon e^{-\mu t}\,\, \hbox{ for all }\,\, t\geq 0,\, x\in\R^d,$$
where $s_0\in\R$ is a constant which has the sign of $c_1^*$.  Combining the above, we get 
$$U_2(t,x) \geq U_1(t-s_0\varepsilon, x+kL)-\varepsilon e^{-\mu t} \,\,\hbox{ for all }\,\, t\geq 0,\, x\in\R^d.$$ 
We now choose a vector $k^*=(k^*_1,\cdots,k^*_d)\in\Z^d$ such that $k^*L\cdot e / c_1^* >0$, where $k^*L=(k^*_1L_1,\cdots,k^*_dL_d)$. 
Notice that $U_1$ satisfies \eqref{formula-period} with $c=c_1^*$.  
Taking $t=nk^*L\cdot e/c_1^*$ and $x=nk^*L$ for $n\in\N$ in the above inequality yields    
\begin{equation}\label{comp-U1-U2}
U_2\left(\frac{nk^*L\cdot e}{c_1^*},nk^*L\right) \geq U_1(-s_0\varepsilon, kL)-\varepsilon {\rm exp}
\left(\frac{-\mu nk^*L\cdot e}{c_1^*}\right).
\end{equation}
Recall that we have assumed that $c_1^* > c_2^*$. Therefore, 
$$n k^* L \cdot e -  c_2^* \frac{nk^*L\cdot e}{c_1^*} \to +\infty \,\,\hbox{ as }\,\, n\to +\infty, $$
and hence
$$U_2\left(\frac{nk^*L\cdot e}{c_1^*},nk^*L\right) \to 0 \,\,\hbox{ as }\,\, n\to +\infty.$$
Passing to the limit as $n \to +\infty$ in \eqref{comp-U1-U2}, we get $0 \geq U_1 (-s_0 \varepsilon, kL)$, which is a contradiction. 

In the case where $c_2^*\neq 0$, one reaches a similar contradiction by using Lemma \ref{sub-super-nonzero} to construct a supersolution of equation \eqref{eq:main} with $f=f_2$. Therefore, we conclude that  $c^*_1 \leq c^*_2 $, and the proof of Lemma~\ref{speed-tau-monotone-hd} is complete.
\end{proof}

%%%%%%%%%%%%%%%%%%%%%%%%%%%%%%%%%%%%%%%%%%%%%%%%%%%%%%%%%

\subsection{Exponential decay of pulsating waves}

In this subsection, we show the exponential decay of the pulsating waves of \eqref{eq:main} when they approach the stable limiting states $0$ and $1$. 

Recall that $U(t,x)$ is a pulsating wave of \eqref{eq:main} with speed $c^*$. 
By Definition~\ref{pulsating-tw}, we can write 
$$U(t,x):= \Phi(x\cdot e-c^*t,x) \,\, \hbox{ for }  \,\, t\in\R,\,x\in\R^d,$$
provided that $c^*\neq 0$. 
Notice that this change of variables is not available for the pulsating wave when $c^*= 0$, since it is stationary (see also Remark~\ref{rmk:c=0}). When $c^* = 0$, we still use the notation $U$ to refer to a pulsating wave, but we point out here that the profile function is no longer unique in such case (see \cite{dhz2} for a nonuniqueness example in spatial dimension~1).

The following lemma gives the exponential decay of nonstationary pulsating waves. 

\begin{lem}\label{exp-decay-hd-1} 
Let $\lambda_{\pm}$ be positive constants such that 
\begin{equation*}
\lambda_{+}=\frac{c^*+\sqrt{{c^*}^2-4f '(0)}}{2},\qquad  \lambda_{-}=\frac{- c^*+\sqrt{{c^*}^2-4f '(1)}}{2}.
\end{equation*}
If $c^*\neq 0$, then 
\begin{equation}\label{exponen-j-hd}
\frac{\partial_{\xi}\Phi(\xi,x)}{\Phi(\xi,x)}\to -\lambda_{+} \,\hbox{ as } \,\xi\to + \infty,\qquad \frac{\partial_{\xi}\Phi(\xi,x)}{1-\Phi(\xi,x)}=-\lambda_{-}\, \hbox{ as }\, \xi\to-\infty,
\end{equation}
where all the convergences hold uniformly in $x\in\R^d$.
\end{lem}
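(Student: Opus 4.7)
My plan is to reduce the problem to a linearized equation on the half-space $\{\xi>\xi_0\}$ where $f$ is spatially homogeneous, and then apply a Liouville-type classification to pin down the limit exponential rate. First, I would write $U(t,x)=\Phi(\xi,x)$ with $\xi:=x\cdot e-c^*t$; substituting into \eqref{eq:main} shows that $\Phi$ satisfies the (degenerate) elliptic equation
$$
\partial_{\xi\xi}\Phi+2e\cdot\nabla_x\partial_\xi\Phi+\Delta_x\Phi+c^*\partial_\xi\Phi+f(x,\Phi)=0 \quad \text{on } \R\times\R^d,
$$
with $\Phi$ $L$-periodic in $x$ and having limits $1$ (resp.\ $0$) as $\xi\to-\infty$ (resp.\ $+\infty$). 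I will focus on the limit $\xi\to+\infty$; the case $\xi\to-\infty$ is entirely analogous after the substitution $\Phi=1-\phi$ (leading to $f'(1)$ in place of $f'(0)$). Thanks to \eqref{homo-neighbor} and the uniform convergence $\Phi(\xi,\cdot)\to 0$, there exists $\xi_0$ such that $f(x,\Phi(\xi,x))=f(\Phi(\xi,x))$ on $[\xi_0,+\infty)\times\R^d$, so the equation becomes spatially homogeneous there. Writing $f(s)=f'(0)s+g(s)$ with $g(s)=O(s^2)$ and inserting the $x$-constant ansatz $e^{-\lambda\xi}$ into the linearized equation yields the characteristic equation $\lambda^2-c^*\lambda+f'(0)=0$, whose positive root (since $f'(0)<0$) is precisely $\lambda_+$.

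Next, I would prove matching two-sided exponential bounds on $\Phi$. The idea is to build sub- and super-solutions of the form $Ae^{-\lambda_+\xi}\pm Be^{-\mu\xi}$ with $\mu\in(\lambda_+,2\lambda_+]$ and appropriate positive constants $A,B$: the correction $\pm Be^{-\mu\xi}$ absorbs the quadratic remainder $g$, while $A$ is tuned so that the profile dominates (resp.\ is dominated by) $\Phi$ at $\xi=\xi_1$. Together with the maximum principle for the degenerate elliptic operator and the uniform lower bound $\inf_x\Phi(\xi_1,x)>0$ (from $L$-periodicity in $x$), this would yield constants $A_1,A_2>0$ such that $A_1e^{-\lambda_+\xi}\le\Phi(\xi,x)\le A_2e^{-\lambda_+\xi}$ for $\xi\geq\xi_1$, $x\in\R^d$. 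In particular, the $L$-periodic-in-$x$ rescaled profile
$$
\Psi(\xi,x):=e^{\lambda_+\xi}\Phi(\xi,x)
$$
is then bounded above and below by positive constants on $\{\xi\geq\xi_1\}\times\R^d$.

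Finally, I would pass to the limit $\xi\to+\infty$, switching to the parabolic viewpoint for cleanliness. Using $\lambda_+^2-c^*\lambda_++f'(0)=0$, the function $V(t,x):=e^{\lambda_+(x\cdot e-c^*t)}U(t,x)$ satisfies, in the space-time region where $U$ is small,
$$
\partial_tV=\Delta_xV-2\lambda_+e\cdot\nabla_xV+e^{\lambda_+(x\cdot e-c^*t)}g(e^{-\lambda_+(x\cdot e-c^*t)}V),
$$
with the last term $O(e^{-\lambda_+(x\cdot e-c^*t)}V^2)$ vanishing uniformly as $x\cdot e-c^*t\to+\infty$. By parabolic Schauder regularity and the uniform bounds on $V=\Psi$, any sequence $(t_n,x_n)$ with $x_n\cdot e-c^*t_n\to+\infty$ produces translates $V(\cdot+t_n,\cdot+x_n)$ which converge along a subsequence to a bounded positive solution $V_\infty$ of the homogeneous linear equation $\partial_tV_\infty=\Delta_xV_\infty-2\lambda_+e\cdot\nabla_xV_\infty$ on all of $\R\times\R^d$. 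The change of variables $\widetilde V_\infty(t,x):=V_\infty(t,x+2\lambda_+et)$ turns this into the heat equation, and the classical parabolic Liouville theorem for bounded positive ancient (in fact eternal) solutions of the heat equation on $\R\times\R^d$ forces $\widetilde V_\infty\equiv\kappa$ for some positive constant $\kappa$. Hence $V_\infty\equiv\kappa$, and since the limit is independent of the subsequence, $\Psi(\xi,x)\to\kappa$ uniformly in $x$ as $\xi\to+\infty$. Elliptic (or parabolic) regularity upgrades this to $C^1$ convergence, yielding $\partial_\xi\Psi/\Psi\to 0$, equivalently $\partial_\xi\Phi/\Phi\to-\lambda_+$ uniformly in $x\in\R^d$; the analogous statement at $\xi\to-\infty$ follows identically. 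The hard part will be Step 2, namely constructing sub/super-solutions compatible with both the degenerate elliptic structure and the possibly indefinite sign of $g$ near $0$ and $1$; the Liouville step in Step 3 is in comparison fairly clean once the parabolic reformulation is in hand.
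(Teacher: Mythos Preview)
Your strategy is genuinely different from the paper's and can be made to work, but Step~3 contains a gap as written. The parabolic Liouville theorem tells you that every subsequential limit $V_\infty$ of the translates is a positive constant, but it does \emph{not} tell you that this constant is the same along all subsequences; the sentence ``since the limit is independent of the subsequence, $\Psi(\xi,x)\to\kappa$'' is therefore unjustified. Fortunately the lemma only asks for $\partial_\xi\Phi/\Phi\to-\lambda_+$, i.e.\ $\partial_\xi\Psi/\Psi\to 0$, and this \emph{does} follow from your argument: each subsequential limit is constant, hence has vanishing derivatives, so by parabolic $C^1$ estimates together with the uniform positive lower bound on $\Psi$ from Step~2, $\partial_\xi\Psi/\Psi\to 0$ along every sequence with $\xi_n\to+\infty$. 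Note also that Step~2 needs a preliminary crude exponential upper bound on $\Phi$ (with some rate $\alpha\in(0,\lambda_+)$, obtained for instance from $f(s)\le \tfrac12 f'(0)s$ near $0$) before $g(\Phi)=O(\Phi^2)$ becomes small enough to be absorbed by the correction $Be^{-\mu\xi}$; your sketch implicitly assumes this bootstrapping step.

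The paper's proof is shorter and bypasses Step~2 entirely. It first uses the parabolic Harnack inequality to show that $\partial_\xi\Phi/\Phi$ is globally bounded, then sets $\lambda:=\limsup_{\xi\to+\infty}\sup_x \partial_\xi\Phi/\Phi$ and, along a maximizing sequence $(\xi_n,x_n)$, rescales by $V_n(t,x):=U(t+t_n,x)/U(t_n,x_n)$ rather than by $e^{-\lambda_+\xi}$. The limit $V_\infty$ solves the linearized equation $\partial_t V_\infty=\Delta V_\infty+f'(0)V_\infty$, and the key step is that $W:=\partial_t V_\infty/V_\infty$ satisfies a linear parabolic equation and attains its global minimum $-c^*\lambda$ at an interior point; the strong maximum principle forces $W\equiv-c^*\lambda$, whence $V_\infty=e^{\lambda(x\cdot e-c^*t)}\psi(x)$ with $\psi$ periodic, and integrating the resulting equation for $\psi$ over a period cell identifies $\lambda=-\lambda_+$. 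This avoids both the barrier construction and the Liouville theorem, at the cost of a Harnack input. Your route trades Harnack for explicit sub/super-solutions, and has the side benefit of producing the two-sided bound $A_1e^{-\lambda_+\xi}\le\Phi\le A_2e^{-\lambda_+\xi}$, which the paper only records as a remark.
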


\begin{proof}
We only show the first convergence stated in  \eqref{exponen-j-hd}, since the proof of the second one is almost identical. We mention that the proof follows the main lines of that of \cite[Proposition 2.2]{h}. For the sake of completeness, we include the details below. 

Notice that $U$ satisfies \eqref{formula-period} with $c=c^*$. By the standard parabolic estimates and the parabolic Harnack inequality, one can conclude that 
\begin{equation}\label{estaime-U-hd}
\sup_{(t,x)\in\R\times\R^d}\left(\frac{|\partial_t U(t,x)|}{U(t,x)} +\frac{| \nabla U (t,x)|}{U(t,x)}\right) \leq C\,\hbox{ for all }  \, (t,x)\in\R\times\R^d ,
\end{equation}
for some constant $C>0$. 
Since $\partial_\xi \Phi (\xi,x)=- \partial_t U ((x\cdot e-\xi)/c^*,x)/c^*$, it then follows that $\partial_\xi \Phi /\Phi$ is globally bounded in $\R\times\R^d$. 
This together with the fact that $\partial_\xi \Phi (\xi,x) < 0$ for $(\xi,x)\in\R\times\R^d$ (due to the monotonicity of $U$ in its first variable, see Theorem \ref{th:tw}) implies that
$$\lambda:=\limsup_{\xi\to + \infty}\sup_{x\in\R^d}\frac{\partial_\xi \Phi (\xi,x)}{\Phi(\xi,x)} \in (-\infty,0].$$
Since $\Phi(\xi,x)$ is $L$-periodic in $x$, one finds a sequence $(\xi_n,x_n)\in \R\times[0,L_1]\times\cdots\times[0,L_d]$ such that $\xi_n\to + \infty$ as $n\to +\infty$ and  
$$\frac{\partial_\xi \Phi (\xi_n,x_n) }{\Phi(\xi_n,x_n) }\to \lambda \,\hbox{ as } \, n\to +\infty.  $$
Up to extraction of some subsequence, we may assume that $x_n\to x_*$ as $n\to +\infty$ for some~$x_*\in [0,L_1]\times\cdots\times[0,L_d]$. 
For each $n\in\N$, set
$$t_n=\frac{x_n\cdot e-\xi_n}{c^*} \quad\hbox{and}\quad V_n(t,x)=\frac{U(t+t_n,x)}{U(t_n,x_n)}\, \, \hbox{ for }\, (t,x)\in\R\times\R^d.$$
Because of \eqref{estaime-U-hd}, the function $V_n$ is locally bounded (that is, $\sup_{n\in\N} \|V_n\|_{L^{\infty}(K)}<\infty$ for any compact subset $K$ of $\R\times\R^d$). Moreover, $V_n$ is positive and satisfies  
\begin{equation*}
\partial_t V_n(t,x)=\Delta V_n(t,x) +\frac{f(x,U(t+t_n,x))}{U(t+t_n,x)}V_n(t,x) \ \,\hbox{ for } \,(t,x)\in\R\times\R^d.
\end{equation*}
By the $C^1$-regularity and the periodicity of $f$, the function $f(x,U(t+t_n,x))/U(t+t_n,x)$ is uniformly bounded. 
Then by standard parabolic estimates, up to extraction of some subsequence, there exists a nonnegative function $V_{\infty}\in C^{1,2}_{loc}(\R\times\R^d)$ such that $V_n \to V_{\infty}$ in $C^{1,2}_{loc}(\R\times\R^d)$. 
Furthermore, since $\xi_n\to +\infty$ and $t_n=(x_n \cdot  e -\xi_n)/c^*$, by Definition~\ref{pulsating-tw}, we have 
$U(t+t_n,x)\to 0$ as $n\to +\infty$ locally uniformly. Then by the assumption~\eqref{homo-neighbor}, 
$$\frac{f(x,U(t+t_n,x))}{U(t+t_n,x)} \to f'(0)\, \hbox{ as }\, n\to +\infty\, \hbox{ locally uniformly for } (t,x)\in\R\times\R^d.$$
Consequently, $V_{\infty}(t,x)$ is a nonnegative solution of 
\begin{equation}\label{eq-Vinfty-hd}
\partial_t V_{\infty}(t,x)=\Delta V_{\infty}(t,x) +f '(0)V_{\infty}(t,x) \, \ \hbox{ for }  \,(t,x)\in\R\times\R^d.
\end{equation}
Notice that $V_{\infty}(0,x_*)=1$. Thus, by the strong maximum principle,  $V_{\infty}(t,x)>0$ for all $(t,x)\in\R\times\R^d$. 

On the other hand, for each $n\in\N$, we have
$$\frac{\partial_t V_n(t,x)}{V_n(t,x)}=\frac{\partial_t U(t+t_n,x)}{U(t+t_n,x)}=-c^*\frac{\partial_{\xi} \Phi(x\cdot e-c^*(t+t_n),x)}{\Phi(x\cdot e-c^*(t+t_n),x)} \ \, \hbox{ for } \, (t,x)\in\R\times\R^d.$$
Combining this with the definition of $\lambda$, we obtain  
$$W(t,x):=\frac{\partial_t V_{\infty}(t,x)}{V_{\infty}(t,x)}\geq -c^*\lambda \,\hbox{ for } \,(t,x)\in\R\times\R^d.$$
By the choice of $(\xi_n,x_n)$ and the definition of $t_n$, we also have $W(0,x_*)=-c^*\lambda$. Furthermore, it is straightforward to check that $W(t,x)$ is a classical solution of the following linear parabolic equation 
$$\partial_t W(t,x)=\Delta W(t,x)+ 2\frac{\nabla V_{\infty}(t,x)}{V_{\infty}(t,x)} \nabla W(t,x) \, \ \hbox{ for }  \,(t,x)\in\R\times\R^d,$$
which reaches its minimum $-c^* \lambda$ at $(0,x_*)$. It then follows from the strong maximum principle that $W \equiv -c^*\lambda$ in $\R\times\R^d$, i.e. that $\partial_tV_{\infty}=-c^* \lambda  V_{\infty}$. Since $V_{\infty}$ satisfies \eqref{eq-Vinfty-hd} and the property \eqref{formula-period} with $c=c^*$, one may check that 
$$V_{\infty}(t,x)=e^{\lambda (x\cdot e-c^*t)}\psi(x)\, \hbox{ for } \,(t,x)\in\R\times\R^d, $$
where $\psi(x)$ is a positive $L$-periodic solution of 
$$\Delta \psi +2\lambda \nabla \psi \cdot  e +(\lambda^2+c\lambda+ f '(0))\psi=0 \, \ \hbox{ for }\, x\in\R^d.$$
Integrating the above equation over $[0,L_1]\times\cdots\times[0,L_d]$, we obtain that $\lambda$ is a root of $\lambda^2+c^*\lambda+ f'(0)=0$. Since $f ' (0) < 0$ and $\lambda$ is nonpositive, it then follows that 
$\lambda=(-c^*-\sqrt{(c^*)^2-4 f '(0)})/2$,   that is, $\lambda=-\lambda_{+}$. 

We have now proved that $\limsup_{\xi\to +\infty}\sup_{x\in\R^d} \partial_\xi \Phi/\Phi=-\lambda_{+}$. 
With a similar argument, one can get that $\liminf_{\xi\to +\infty}\inf_{x\in\R^d}\partial_\xi \Phi /\Phi=-\lambda_{+}$.
Hence, $\lim_{\xi\to +\infty}\partial_\xi  \Phi/\Phi=-\lambda_{+}$. This ends the proof of Lemma \ref{exp-decay-hd-1}.
\end{proof}

\begin{rmk}\label{r-exp-decay-hd}
From Lemma \ref{exp-decay-hd-1}, one can conclude that the nonstationary pulsating waves approach the stable limiting states $0$ and $1$ exponentially fast. For example, for any small constant $0<\epsilon< \lambda_{+}$, one easily checks that there exists $\xi_1>0$ sufficiently large such that 
$$ -(\lambda_{+}+\epsilon)(\xi-\xi_1) \leq \ln \Phi(\xi,x)- \ln\Phi(\xi_1,x) \leq -(\lambda_{+}-\epsilon)(\xi-\xi_1)$$
for all $\xi\geq \xi_1$ and $x\in\R^d$. 
Noticing that $\Phi(\xi,x)$ is periodic in $x$, we can find two positive constants $C_2\geq C_1$ such that
$$C_1 e^{-(\lambda_{+}+\epsilon)\xi} \leq   \Phi(\xi,x) \leq  C_2 e^{-(\lambda_{+}-\epsilon)\xi} \,\,\hbox{ for all }\,\,\xi\geq \xi_1,\,x\in\R^d. $$
Actually, one could further prove that $\lambda_{+}$ is the exponential decay rate of $ \Phi(\xi,x)$ as $\xi\to +\infty$, that is, there exist some positive constant $C$ and some positive $L$-periodic function $\psi\in C^2(\R^d)$ such that
$\Phi(\xi,x)\sim Ce^{ -\lambda_{+}\xi}\psi(x)$ as $\xi\to +\infty$. 
Since this is not needed in showing the main theorems of the present paper, we do not prove it here.
\end{rmk}

Now we turn to the exponential decay of stationary pulsating waves.

\begin{lem}\label{exp-decay-hd-2}
Any stationary (i.e. with $c^* =0$) wave $U$ in direction $e$ satisfies that there exist constants $0<C_{1,\pm}<C_{2,\pm}$ and $\xi_{\pm}>0$ such that 
\begin{equation*}
C_{1,+} e^{-\sqrt{-f'(0)} x\cdot e} \leq  U(x) \leq  C_{2,+} e^{-\sqrt{-f'(0)} x\cdot e} \,\,\hbox{ for all }\,\, x\cdot e \geq \xi_+,
\end{equation*} 
and 
\begin{equation*}
C_{1,-} e^{\sqrt{-f'(1)} x\cdot e} \leq  1 - U(x) \leq  C_{2,-} e^{\sqrt{-f'(1)} x\cdot e} \,\,\hbox{ for all }\,\, x\cdot e \leq  -\xi_-. 
\end{equation*}
\end{lem}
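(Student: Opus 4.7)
The plan is to establish the bounds at $x\cdot e\to+\infty$ first; the bounds at $-\infty$ then follow by the substitution $\widetilde U(x):=1-U(x)$, which is a stationary wave in direction $-e$ for the reaction $\widetilde f(x,v):=-f(x,1-v)$. One checks that $\widetilde f$ inherits the bistable type and assumption \eqref{homo-neighbor} from $f$, with $\widetilde f'(0)=f'(1)$. Throughout the $+\infty$ analysis, I set $\lambda:=\sqrt{-f'(0)}$, which is positive by \eqref{mu0-delta0}, and work on the half-space $\Omega_M:=\{x\cdot e>M\}$ where $M$ is large enough that $U\leq \delta_0$ there (by the assumed uniform decay of $U$); by \eqref{homo-neighbor}, $U$ then solves the autonomous elliptic equation $\Delta U+f(U)=0$ on $\Omega_M$.

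First I would derive rough exponential bounds with rates $\lambda\pm\epsilon$ by a classical sub/supersolution comparison. By $C^1$ regularity of $f$ at $u=0$, for each small $\epsilon>0$ there exists $\delta_\epsilon\in(0,\delta_0]$ with $(f'(0)-\epsilon)u\leq f(u)\leq (f'(0)+\epsilon)u$ on $[0,\delta_\epsilon]$. Choosing $M_\epsilon\geq M$ so that $U\leq\delta_\epsilon$ on $\Omega_{M_\epsilon}$ and setting $\lambda_\epsilon:=\sqrt{-f'(0)-\epsilon}$, the function $\bar U(x):=\delta_\epsilon e^{-\lambda_\epsilon(x\cdot e-M_\epsilon)}$ satisfies
\begin{equation*}
\Delta\bar U+f(\bar U)\leq \lambda_\epsilon^2\bar U+(f'(0)+\epsilon)\bar U=(\lambda_\epsilon^2+f'(0)+\epsilon)\bar U=0
\end{equation*}
on $\Omega_{M_\epsilon}$ and dominates $U$ on the boundary. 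Since both tend to $0$ at infinity, standard comparison yields $U\leq\bar U$; a symmetric subsolution construction at rate $\sqrt{-f'(0)+\epsilon}$ gives the matching lower bound.

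Next, to sharpen the exponents to exactly $\lambda$, I would adapt the blowup argument from the proof of Lemma \ref{exp-decay-hd-1}. Set $\lambda^*:=\limsup_{x\cdot e\to+\infty}(-\partial_e U(x)/U(x))$, finite by Harnack and elliptic regularity, pick $y_n$ with $y_n\cdot e\to+\infty$ realizing the limsup, and rescale $V_n(x):=U(x+y_n)/U(y_n)$. Harnack gives local boundedness, and passing to a $C^2_{loc}$-limit one finds $V_\infty\geq 0$ with $V_\infty(0)=1$ solving $\Delta V_\infty+f'(0)V_\infty=0$ (using $U(x+y_n)\to 0$ locally uniformly and \eqref{homo-neighbor}). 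The ratio $W:=\partial_e V_\infty/V_\infty$ then satisfies a linear elliptic equation without zeroth-order term and attains its minimum $-\lambda^*$ at $x=0$, hence $W\equiv-\lambda^*$ by the strong maximum principle. This forces $V_\infty(x)=\psi(x-(x\cdot e)e)\,e^{-\lambda^* x\cdot e}$ for some nonnegative $\psi$ with $\psi(0)=1$, which upon substitution solves $\Delta_\perp\psi+(\lambda^{*2}+f'(0))\psi=0$ on the hyperplane orthogonal to $e$. Since $\psi$ is nonnegative with at most exponential growth (by iterated Harnack), a Liouville-type argument forces $\lambda^{*2}+f'(0)=0$, i.e.\ $\lambda^*=\lambda$; the analogous liminf argument yields $\partial_e U(x)/U(x)\to-\lambda$ uniformly.

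Finally, to pass from the asymptotic slope to the sharp bounds claimed, I would bootstrap: setting $v(x):=U(x)e^{\lambda x\cdot e}$, a direct calculation gives $\Delta v-2\lambda\,\partial_e v=-((f(U)-f'(0)U)/U)\,v$, whose zeroth-order coefficient tends to $0$ at infinity (by the rough bound on $U$ combined with $(f(u)-f'(0)u)/u\to 0$ as $u\to 0$). A Phragm\'en--Lindel\"of-type maximum principle on $\Omega_M$, using that $v$ has subexponential growth (by the rough bounds) and is bounded above and below by positive constants on $\partial\Omega_M$, then yields $0<C_{1,+}\leq v(x)\leq C_{2,+}$ for $x\cdot e$ large, which is the desired estimate. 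The hard part is this last bootstrapping step: one must carefully exclude non-integrable oscillations of $v$ on an unbounded half-space, despite the vanishing zeroth-order coefficient being sign-indefinite.
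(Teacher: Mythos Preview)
Your route is much heavier than the paper's and Step~2 has a genuine gap. The paper bypasses all three of your steps with a single comparison: it picks a homogeneous balanced bistable $f_0\in C^1$ that coincides with $f$ on $[-\delta_0,\delta_0]\cup[1-\delta_0,1+\delta_0]$ and uses the one-dimensional stationary wave $V_0$ of $V_0''+f_0(V_0)=0$, which already has the sharp decay $V_0(\xi)\asymp e^{-\lambda\xi}$. Since $f\equiv f_0$ where $U\le\delta_0$ by \eqref{homo-neighbor}, the planar functions $x\mapsto V_0(x\cdot e+\xi)$ are \emph{exact solutions} of $\Delta u+f_0(u)=0$ on your half-space $\Omega_M$. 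A short sliding argument (using that $0<\inf_{\{x\cdot e=\xi_+\}}U\le\sup_{\{x\cdot e=\xi_+\}}U<1$, the strict monotonicity of $f_0$ near $0$, and a compactness trick to handle the unbounded hyperplanes) produces shifts $\xi_1,\xi_2$ with $V_0(x\cdot e+\xi_2)\le U(x)\le V_0(x\cdot e+\xi_1)$ on $\Omega_M$, and the sharp two-sided bounds follow at once from those of~$V_0$. No rough bounds, no blowup, no Phragm\'en--Lindel\"of.

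The flaw in your Step~2 is that the Liouville input is only one-sided. After the blowup you get $\psi>0$ on $\R^{d-1}$ solving $\Delta_\perp\psi=(\lambda^2-\lambda^{*2})\psi$. The standard obstruction (no positive solution of $\Delta\psi+c\psi=0$ on $\R^{d-1}$ when $c>0$) forces only $\lambda^2-\lambda^{*2}\ge 0$, i.e.\ $\lambda^*\le\lambda$; for $\lambda^*<\lambda$ there are many positive solutions, e.g.\ $\cosh\big(\sqrt{\lambda^2-\lambda^{*2}}\,y_1\big)$, all with exactly the exponential growth that iterated Harnack allows, so your growth bound excludes nothing. The reason the analogous step succeeds in Lemma~\ref{exp-decay-hd-1} is that there $\psi$ inherits $L$-periodicity from the pulsating profile, so integrating over the period cell pins down the algebraic relation for~$\lambda$; in the stationary case no such periodicity is available. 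Your Step~1 bounds do not rescue the argument either, since along the blowup sequence $V_n(x_\perp)=U(x_\perp+y_n)/U(y_n)\le (C_{2,\epsilon}/C_{1,\epsilon})\,e^{2\epsilon\,y_n\cdot e}\to\infty$, so they give no uniform control on~$\psi$. Step~3 is then moot, but in any case a Phragm\'en--Lindel\"of principle on an unbounded half-space with sign-indefinite zeroth-order term tending merely to~$0$ (no rate) is not a standard lemma, and you rightly flag it as the hard part.
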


\begin{proof}
We only prove the wanted bounds as $ \xi \to +\infty$, the estimates as $\xi\to-\infty$ being handled by a similar argument.

Let $\delta_0>0$ be a constant such that \eqref{mu0-delta0} holds. 
Recall that $f$ is independent of $x$ when $u\in [-\delta_0,\delta_0]\cup  [1-\delta_0, 1 + \delta_0]$.
We take a function $f_0\in C^1(\R)$ such that 
$$f_0=f\,\,\hbox{ in }\,\, [-\delta_0,\delta_0]\cup  [1-\delta_0, 1 + \delta_0],$$ 
$$f_0<0\,\,\hbox{ in }\,\, \left(0,1/2\right),\quad  f_0>0\,\,\hbox{ in }\,\, \left(1/2,1\right)\,\,\hbox{ and }\,\,\int_0^1f_0(u)du=0.$$
Namely, $f_0$ is a homogeneous balanced bistable nonlinearity with $u=0$ and $u=1$ being two linearly stable zeros. It is well known (see, e.g., \cite{fm1}) that the following equation 
$$\partial_t v =\partial_{\xi\xi} v +f_0(v) \,\,\hbox{ for }\,\, t\in\R,\,\xi\in\R ,$$ 
has a stationary traveling wave $V_0(\xi)$ connecting $0$ and $1$, and that $V_0(\xi)$ approaches $0$ and~$1$ exponentially fast with rates $\sqrt{-f_0'(0)}$ and $\sqrt{-f_0'(1)}$, respectively.

Take some $\xi_+$ such that 
$$U(x) \leq \delta_0\,\,\hbox{ for all }\,\, x \cdot  e \geq \xi_+.$$
In particular, the function $U$ solves
\begin{equation}\label{stationary-U}
\Delta U + f_0 ( U) = 0 \quad \mbox{ for } \ x \cdot  e \geq \xi_+.
\end{equation}
Moreover, it follows from the definition of a stationary pulsating wave (more precisely their asymptotics as $x \cdot e \to \pm \infty$) that for any $\xi \in \R$,
\begin{equation}\label{eq:exp-decay-later}
0 < \inf_{\{x \cdot  e =\xi \}  } U (x)  \leq \sup_{ \{x \cdot  e = \xi \}} U (x) < 1.
\end{equation}
Thus we can find $\xi_1\in\R$ such that
\begin{equation}\label{exp_stat_inf}
\inf_{ \{x \cdot  e = \xi_+ \}} \left( V_0 (x\cdot e + \xi_1) - U (x)  \right)>0.
\end{equation}
Now we claim that
\begin{equation}\label{cla_alternate_exp}
V_0 (x \cdot  e + \xi_1 ) \geq U (x) \ \mbox{ for all } x \cdot  e \geq \xi_+.
\end{equation}
Proceed by contradiction and assume that there exists a sequence $(x_n)_{n \in \mathbb{N}} \subset \R^d$ such that $x_n \cdot  e \geq \xi_+$, and
$$ U (x_n) - V_0 (x_n \cdot  e + \xi_1) \to \sup_{ \{x \cdot  e \geq \xi_+\}} \left(  U (x) - V_0 (x \cdot  e + \xi_1) \right) >0.$$
Since both $U (x)$ and $V_0 (x \cdot  e)$ tend to 0 as $x \cdot  e \to +\infty$, we must have that the sequence $(x_n \cdot  e)_{n \in \mathbb{N}}$ is bounded. Thus up to extraction of a subsequence, we can assume that $x_n \cdot  e \to \xi_\infty \geq \xi_+$ as $n \to +\infty$.

Now we write $x_n = x'_n + x''_n$ with $x'_n = (x_n \cdot  e)  e$ and $x_n'' \perp  e$, and introduce
$$U_n (x) := U (x + x''_n) , \quad V_n(x):= V_0 ( (x + x''_n) \cdot  e + \xi_1). $$
Up to extraction of another subsequence, we have that these two sequences converge respectively to $U_{\infty}$ and $V_{\infty}$ which both solve \eqref{stationary-U}.
Moreover, by construction we have that, for any $x\in\R^d$ such that $x \cdot  e \geq \xi_+$,
$$(U_{\infty}  - V_{\infty} ) (x) \leq (U_{\infty}  - V_{\infty} ) (\xi_\infty  e )  \in (0 , +\infty).$$
Due to \eqref{exp_stat_inf}, we must have $\xi_\infty > \xi_+$. Notice also that, due to our choice of $\xi_+$, we have that $U_{\infty} (\xi_\infty  e) \leq \delta_0$, and hence $V_{\infty} (\xi_\infty  e) \leq \delta_0$. Thus, substracting the equations satisfied by $U_{\infty}$ and $V_{\infty}$ and evaluating at $x=\xi_\infty  e$, we find
$$0 \geq \Delta (U_{\infty}  - V_{\infty} )  (\xi_\infty  e) =  f_0 ( V_{\infty}(\xi_\infty  e) ) - f_0 (U_{\infty}(\xi_\infty  e) ),$$
which is impossible, since by \eqref{mu0-delta0} and by the choice of $f_0$, we have $f_0'<0$ in $[0,\delta_0]$.   
Thus, we have proved~\eqref{cla_alternate_exp} and from the exponential behaviour of $V_0 (\xi)$, we get the wanted upper estimate of $U_0(x)$ as $x\cdot e \to +\infty$. 

The lower estimate as $x\cdot e \to +\infty$ follows from a similar argument. Using~\eqref{eq:exp-decay-later} again, we find $\xi_2\in\R$ such that 
$$\sup_{ \{x \cdot  e = \xi_+ \}} \left( V_0 (x \cdot  e + \xi_2) - U (x)  \right) <0. $$
Then, proceeding exactly as above, we can show that
$$V_0 (x \cdot  e + \xi_2) \leq U (x) \, \mbox{ for all }\, x \cdot  e \geq \xi_+ ,$$
and the wanted estimate follows. We omit the details for the estimates as $x\cdot e\to-\infty$ whose proof is almost identical.
\end{proof}

The above lemma immediately implies that the stationary wave $U(x)$ approaches the limiting states $0$ and $1$ exponentially fast with rates $\sqrt{-f'(0)}$ and $\sqrt{-f'(1)}$, respectively. 
%%%%%%%%%%%%%%%%%%%%%%%%%%%%%%%%%%%%%%%%%%%%%%%%%%

\subsection{Continuity of pulsating wave speeds}

In this subsection, we show the continuity of the wave speed under perturbation on the nonlinearity~$f$.  
Recall that we have assumed that equation \eqref{eq:main} is of the bistable type in the sense of Definition \ref{bistable}, that $f$ satisfies \eqref{homo-neighbor}, and that $U$ is a pulsating wave of \eqref{eq:main} with speed $c^*$ in the direction $e$. To indicate the dependence on $f$, we will write $c^*(f)$ instead of~$c^*$ below.       

The main result of this subsection is stated as follows. 

\begin{lem}\label{speed-tau-continue-hd} 
Let $(f_n)_{n\in\N}\subset C^1(\R^{d+1})$ be a sequence of functions such that each $f_n$ is $L$-periodic in its first variable and is of the bistable type in the sense of Definition \ref{bistable}, and
\begin{equation}\label{fn-converge-f}
 f_n\to f \,\,\hbox{ as }\,\, n\to +\infty \, \, \hbox{ in }  \,\, C^1(\R^{d+1}). 
 \end{equation}
For each $n\in\N$, let $U_n(t,x)$ be a pulsating wave of the equation 
$$\partial_t u  = \Delta u + f_n(x,u) \,\,\hbox{ for } \,\, t \in \R,\, \ x \in \R^d,$$
in the direction $e$ with speed $c^*(f_n)$. Then the following statements hold true:
\begin{itemize}
\item[$(i)$] if $c^*(f)=0$, then $c^*(f_n) \to c^*(f)$ as $n\to +\infty$;
\item[$(ii)$] if $c^*(f)\neq 0$ and there exists some $\sigma>0$ such that $|c^*(f_n)|\geq \sigma$ for all $n\in\N$, then
$c^*(f_n) \to c^*(f)$ as $n\to +\infty$.   
\end{itemize}
\end{lem}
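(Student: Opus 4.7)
The plan is a compactness-and-uniqueness argument: every subsequential limit of $c^*(f_n)$ will produce, via a normalized limit of the waves $U_n$, an entire solution of the $f$-equation which, once identified as a pulsating wave, must have speed $c^*(f)$ by the uniqueness statement in Theorem~\ref{th:tw}.

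First I would show that $\sup_n|c^*(f_n)| < \infty$. The idea is to construct, using the uniform $C^1$ bound on the $f_n$ and the uniform strict monotonicity of $\partial_u f_n(x,\cdot)$ on small neighborhoods of $0$ and $1$ (inherited from~$f$ via $C^1$ convergence), a single homogeneous bistable nonlinearity $\bar f(u)$ which dominates $f_n$ pointwise for all large $n$ and has only $0$ and $1$ as zeros in $[0,1]$, each linearly stable. Lemma~\ref{speed-tau-monotone-hd} then gives $c^*(f_n) \leq c^*(\bar f)$, and a symmetric construction produces a matching lower bound.

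Arguing by contradiction, suppose that along a subsequence (still denoted $f_n$) one has $c_n := c^*(f_n) \to c_\infty$ with $c_\infty \neq c^*(f)$. In both parts of the lemma this forces $c_\infty \neq 0$: in $(i)$ because $c^*(f) = 0$, and in $(ii)$ because $|c_n| \geq \sigma > 0$ passes to the limit. By the strict monotonicity of $U_n(\cdot, 0)$ together with its limits $0$ and $1$, one may time-shift each $U_n$ so that $U_n(0,0) = 1/2$. Parabolic Schauder estimates, applied with the uniform $C^1$ bound on $f_n$, yield uniform $C^{2+\alpha, 1+\alpha/2}_{\mathrm{loc}}$ bounds on $U_n$; extracting a further subsequence, $U_n \to U_\infty$ in $C^{2,1}_{\mathrm{loc}}(\R \times \R^d)$, where $U_\infty$ solves $\partial_t U_\infty = \Delta U_\infty + f(x, U_\infty)$, is monotone in $t$, satisfies $U_\infty(0,0) = 1/2$, and (thanks to $c_\infty \neq 0$) inherits the pulsating-wave periodicity
\[
U_\infty\!\left(t + \tfrac{kL \cdot e}{c_\infty},\, x + kL\right) = U_\infty(t,x) \qquad \forall\, (t,x,k) \in \R \times \R^d \times \Z^d.
\]

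The delicate step, and what I expect to be the main obstacle, is showing that $U_\infty$ is genuinely a pulsating wave of $f$, i.e. that its monotone limits $U_\infty(\pm\infty, \cdot)$ are exactly $0$ and~$1$ rather than intermediate $L$-periodic steady states. These limits are $L$-periodic steady states of $f$; bistability rules out intermediate \emph{stable} ones, but intermediate unstable steady states must be excluded by a separate argument. I would do so by combining Lemma~\ref{exp-decay-hd-1} with a uniformity argument: since $c_n$ stays in a compact subset of $\R \setminus \{0\}$ and $f_n'(0), f_n'(1) \to f'(0), f'(1)$, the exponential decay rates of Lemma~\ref{exp-decay-hd-1} for each $U_n$ remain uniformly bounded away from $0$ and $\infty$; together with the normalization $U_n(0,0) = 1/2$, which via parabolic Harnack applied to the positive quantity $\partial_t U_n$ confines the transition zone of each $U_n$ to a neighborhood of the origin uniformly in $n$, one obtains uniform-in-$n$ exponential tail estimates. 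These pass to the limit and pin $U_\infty(\pm\infty,\cdot)$ to $\{0,1\}$; the sign of $c_\infty$ and the monotonicity of $U_\infty$ in $t$ then identify the two limits. Consequently $U_\infty$ is a pulsating wave of $f$ with nonzero speed $c_\infty$, and Theorem~\ref{th:tw} forces $c_\infty = c^*(f)$, contradicting the choice of the subsequence.
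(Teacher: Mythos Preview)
Your overall architecture --- bound the speeds, pass to a subsequential limit $c_\infty$, extract a limiting entire solution $U_\infty$ satisfying the pulsating periodicity relation with speed $c_\infty$, and invoke the uniqueness of the wave speed --- matches the paper's. The boundedness argument via a sandwiching homogeneous bistable nonlinearity and Lemma~\ref{speed-tau-monotone-hd} is a clean alternative to the paper's direct contradiction (Lemma~\ref{bound-cn}).

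The gap is in the step you yourself flag as delicate: showing that $U_\infty(\pm\infty,\cdot)\in\{0,1\}$. Your plan is to prove \emph{uniform-in-$n$} exponential tail estimates for the $U_n$, but Lemma~\ref{exp-decay-hd-1} is an asymptotic statement for a single wave and gives no control on where the decay kicks in; the constants in Remark~\ref{r-exp-decay-hd} may in principle degenerate along the sequence. The Harnack argument you sketch for $\partial_t U_n$ does not close this: Harnack compares values of $\partial_t U_n$ on compact sets, but the transition zone is an a priori non-compact slab, and the normalization $U_n(0,0)=1/2$ gives no lower bound on $\partial_t U_n(0,0)$ --- if $\partial_t U_n(0,0)\to 0$, Harnack would only force $\partial_t U_\infty\equiv 0$ locally, and a periodic stationary limit with $U_\infty(0,0)=1/2$ is precisely the intermediate-state scenario you are trying to exclude. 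The paper sidesteps this entirely: it \emph{allows} the limit $U_\infty$ to connect $0$ to a possibly intermediate periodic state $u_+$, and then rules out $0<u_+<1$ \emph{a posteriori} by comparing the exponential decay rate of $U_\infty$ (governed by $c_\infty$) with that of the genuine wave $U$ of $f$ (governed by $c^*(f)$), using Lemmas~\ref{exp-decay-hd-1} and~\ref{exp-decay-hd-2}. Since the wave with larger speed decays strictly faster at $+\infty$, one can slide it below the other and apply the comparison principle to reach a contradiction. This avoids any uniformity-in-$n$ issue because the comparison is between two fixed solutions of the \emph{limit} equation.
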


The proof of Lemma \ref{speed-tau-continue-hd} relies on the uniqueness of the wave speed of bistable pulsating waves (see Theorem~\ref{th:tw} above) and the exponential decay of the profiles (see Lemmas~\ref{exp-decay-hd-1} and~\ref{exp-decay-hd-2} above). It shares some similarities with the arguments in~\cite{ag}, where the continuity with respect to the direction was adressed in the ignition case. Notice that in statement $(ii)$, we assume that the sequence of speeds $(c^*(f_n))_{n\in\N}$ is bounded away from $0$ by a positive constant $\sigma$. It should be pointed out that this condition may be relaxed in the sense that $c^*(f_n)\neq 0 $ for all $n\in\N$. Since this general result is not needed in showing our main theorems, we do not include it here.

We also mention that the fact that $c^*(f_n) \to c^*(f)$ as $n\to +\infty$ does not ensure that the pulsating wave $U_n$ converges to $U$ as $n\to +\infty$.  Actually, if $c^*(f)\neq 0$, then one could prove it by our arguments below; yet if $c^*(f)=0$, the convergence may be not true due to the lack of uniqueness of the stationary wave.

The following two lemmas will be used in the proof of Lemma \ref{speed-tau-continue-hd}.

\begin{lem}\label{non-exist-hd} 
Let $u$ be a stationary solution of \eqref{eq:main} and let $\delta_0$ be a positive constant such that \eqref{mu0-delta0} holds. If $0\leq u\leq \delta_0$, then $u\equiv 0$, 
and similarly if $1-\delta_0 \leq u\leq 1$, then $u\equiv 1$. 
\end{lem}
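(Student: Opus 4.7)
The plan is to exploit the fact that on the range $u \in [0,\delta_0]$ the equation is spatially homogeneous, by \eqref{homo-neighbor} (recall $\delta_0 < \delta$), so under the hypothesis $0 \leq u \leq \delta_0$ the stationary problem collapses to the translation-invariant elliptic equation $\Delta u + f(u) = 0$ on all of $\R^d$, with $f(0)=0$ and the pointwise bound $f(s) \leq -\mu s$ on $[0,\delta_0]$ obtained by integrating \eqref{mu0-delta0} from $0$.

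I would then set $M := \sup_{\R^d} u \in [0,\delta_0]$ and aim to show $M = 0$. The main obstacle is that the supremum need not be attained on $\R^d$, so one cannot directly invoke the maximum principle at an actual maximum point of $u$ itself. I would get around this by picking a maximizing sequence $(x_n) \subset \R^d$ with $u(x_n) \to M$ and considering the translates $u_n(x) := u(x+x_n)$. Because the reduced equation is translation invariant and $u$ is bounded, standard interior Schauder estimates give uniform $C^{2,\alpha}_{\mathrm{loc}}$ bounds on the $u_n$; extracting a subsequence yields a limit $u_\infty \in C^2(\R^d)$ still satisfying $\Delta u_\infty + f(u_\infty)=0$ and $0 \leq u_\infty \leq M$, which crucially attains its supremum $M$ at the origin.

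The conclusion is then one line of maximum principle: since $u_\infty$ has an interior maximum at $0$, one has $\Delta u_\infty(0) \leq 0$, hence $f(M) = -\Delta u_\infty(0) \geq 0$; combined with the bound $f(M) \leq -\mu M$ this forces $M = 0$, and therefore $u \equiv 0$.

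For the companion statement $1 - \delta_0 \leq u \leq 1 \Rightarrow u \equiv 1$, I would reduce to the case just proved via the substitution $w := 1 - u$, which solves $\Delta w + \widetilde f(w) = 0$ with $\widetilde f(w) := -f(1-w)$. In the range $w \in [0,\delta_0]$ the assumptions \eqref{homo-neighbor} and \eqref{mu0-delta0} transfer verbatim to $\widetilde f$ (with $\widetilde f(0)=0$ and $\widetilde f' \leq -\mu$), so the previous argument applies and yields $w \equiv 0$, i.e.\ $u \equiv 1$.
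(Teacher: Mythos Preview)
Your argument is correct but follows a different route from the paper. The paper's proof is parabolic: it views the stationary $u$ as a time-independent solution of $\partial_t v = \Delta v + f(v)$, constructs the spatially constant supersolution $h(t)$ solving $h' = -\mu h$, $h(0)=\delta_0$ (so $h' \geq f(h)$ thanks to \eqref{mu0-delta0}), and applies the parabolic comparison principle to get $u(x)\le h(t)\to 0$. Your argument is purely elliptic: you shift a maximizing sequence so that the limit $u_\infty$ achieves its supremum $M$ at the origin, then evaluate the equation there to force $f(M)\ge 0$, which together with $f(M)\le -\mu M$ yields $M=0$. The paper's version is shorter because it avoids the compactness step entirely, while your approach has the mild advantage of never leaving the elliptic setting (and of not even using the full parabolic comparison principle, only the second-derivative test at a maximum). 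Either way the second assertion reduces to the first via $w=1-u$, exactly as you note.
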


\begin{proof}
We only prove the first assertion, since the second one is similar. Let $u(x)$ be a stationary solution of \eqref{eq:main} such that $0\leq u\leq \delta_0$. Notice that $f(x,u)\equiv f(u)$ for $u\in [0,\delta_0]$. We have $\Delta u+ f(u)=0$ for $x\in\R^d$. On the other hand, let $h(t)$ be the solution of the ODE 
\begin{equation}\label{ode-h}
 \frac{dh}{dt}=-\mu t   \,\,\hbox{ for }\,\, t>0, \quad  h(0)=\delta_0,  
 \end{equation}
where $\mu$ is the positive constant provided by \eqref{mu0-delta0}. Then  a simple comparison argument implies that $u(x) \leq h(t)$ for all $t\geq 0$, $x\in\R^d$.  Since $h(t)\to 0$ as $t\to +\infty$, we immediately obtain $u\equiv 0$. 
\end{proof}

\begin{lem}\label{bound-cn}
The sequence of wave speeds $(c^*(f_n))_{n\in\N}$ is bounded.
\end{lem}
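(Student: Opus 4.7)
The strategy is to bound $c^*(f_n)$ from above and below by the speeds of two fixed, $x$-independent bistable reaction terms that sandwich the entire sequence $(f_n)_n$, and then to invoke the monotonicity statement in Lemma~\ref{speed-tau-monotone-hd}.

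First, since $f_n \to f$ in $C^1(\R^{d+1})$ and each $f_n$ is $L$-periodic in $x$, the sequence $(f_n)_n$ is uniformly bounded in $C^1$ on $\R^d \times [-\delta,1+\delta]$ for some small $\delta>0$. In particular, there is a constant $M>0$ such that $\|f_n\|_{L^\infty(\R^d\times[0,1])}\leq M$ for all $n$, and the slopes $\partial_u f_n(x,0)$ and $\partial_u f_n(x,1)$ converge uniformly in $x$ to the constants $f'(0)<0$ and $f'(1)<0$ respectively (using~\eqref{homo-neighbor} on $f$). Using these facts, I would exhibit two purely $u$-dependent $C^1$ bistable nonlinearities $g_\pm:\R\to\R$ satisfying
$$ g_-(u) \leq f_n(x,u) \leq g_+(u) \quad \text{for all } n\in\N,\ x\in\R^d,\ u\in[0,1],$$
with $g_\pm(0)=g_\pm(1)=0$, $g_\pm'(0)<0$, $g_\pm'(1)<0$, and admitting a unique zero in $(0,1)$. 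Concretely, I would take $g_+$ to coincide with a mild linear decrease of slope $f'(0)/2$ near $u=0$ (which dominates $f_n(x,u) \sim f'(0)u$ for $u>0$ small, since both are negative), a symmetric linear piece near $u=1$, and a large positive ``hump'' in between exceeding $M$; the construction of $g_-$ is analogous, with signs reversed. These $g_\pm$ are homogeneous, hence trivially satisfy~\eqref{homo-neighbor}, and are of the bistable type in the sense of Definition~\ref{bistable}.

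Lemma~\ref{speed-tau-monotone-hd} then yields
$$ c^*(g_-) \leq c^*(f_n) \leq c^*(g_+) \quad \text{for all } n,$$
so $(c^*(f_n))_n$ lies in the bounded interval $[c^*(g_-),c^*(g_+)]$, which gives the claim.

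The only subtlety is that Lemma~\ref{speed-tau-monotone-hd} as stated requires both comparison functions to satisfy~\eqref{homo-neighbor}, so implicitly we would also need $f_n$ to be spatially homogeneous near $0$ and $1$ (an assumption that is natural in the context where this lemma will be applied, since the $f_n$ can be arranged to share the same neighborhood structure as $f$). Should one wish to avoid this, I would sidestep Lemma~\ref{speed-tau-monotone-hd} and run the comparison directly: the planar traveling wave of the homogeneous bistable equation with reaction $g_+$, suitably shifted in the direction $e$, is a classical supersolution of the $f_n$-equation since $g_+\geq f_n$; iterating the pulsating identity~\eqref{formula-period} along the lattice $L\Z^d$ and passing to the limit then shows that $c^*(f_n)$ cannot exceed the homogeneous speed of $g_+$, for otherwise the pulsating wave $U_n$ would overtake the supersolution in direction $e$ in large time, contradicting the parabolic comparison principle. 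The symmetric argument with $g_-$ furnishes the matching lower bound. The main obstacle is really at this step, namely ensuring that $g_\pm$ are bistable \emph{globally} and not just locally near their endpoints, but the explicit construction above takes care of that.
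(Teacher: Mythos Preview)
Your argument is correct but takes a genuinely different route from the paper. The paper proceeds by contradiction and compactness: assuming (say) $c^*(f_n)\to+\infty$, it normalizes $U_n$ so that $\max_{[0,L]}U_n(t_n,\cdot)=\delta_0$, extracts a limiting entire solution $U_\infty$ of the $f$-equation, and observes that the pulsating identity~\eqref{formula-period} degenerates as $c^*(f_n)\to\infty$ to force $U_\infty$ to be $L$-periodic in~$x$; then $U_\infty(0,\cdot)\le\delta_0$ globally, so comparison with the ODE $h'=-\mu h$ drives $U_\infty$ to~$0$, contradicting its time-monotonicity. Your approach instead sandwiches the $f_n$ between two fixed homogeneous bistable reactions $g_\pm$ and bounds $c^*(f_n)$ by their (planar) wave speeds via comparison.

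Both are valid. Your method yields explicit bounds and is conceptually direct, but the construction of $g_\pm$ needs a little more care than you indicate: you must dominate $f_n$ not just on $[0,1]$ but on a slightly larger interval (since the Fife--McLeod super/subsolutions overshoot by~$\varepsilon$), and you should note that the required pointwise inequalities near $u=0,1$ hold only for $n$ large, which suffices. You are also right that Lemma~\ref{speed-tau-monotone-hd} as stated does not apply directly because the $f_n$ are not assumed to satisfy~\eqref{homo-neighbor}; your workaround of running the comparison with the homogeneous barrier wave directly is the correct fix (and one can arrange $c^*(g_+)>0>c^*(g_-)$ so that the barrier waves are monotone and the argument goes through). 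The paper's softer compactness argument avoids these constructions entirely and, more importantly, the same limiting-solution machinery is reused verbatim in the proof of Lemma~\ref{speed-tau-continue-hd}, so it integrates more naturally with what follows.
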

\begin{proof}
Proceed by contradiction and assume without loss of generality that, up to extraction of some subsequence, $0<c^*(f_n)\to+\infty$ as $n\to+\infty$ (as we will sketch below, the case where $0>c^*(f_n)\to-\infty$ as $n\to+\infty$ can be treated similarly). Then by Theorem \ref{th:tw}, for each $n\in\N$, $U_n(t,x)$ is increasing in $t\in\R$, and hence, $U_n(t,\cdot) \to 0$ as $t\to-\infty$ and $U_n(t,\cdot) \to 1$ as $t\to +\infty$ locally uniformly in $\R^d$. By continuity, there exists a unique $t_n\in\R$ such that  
\begin{equation}\label{normal-UR-1-hd}
\max_{x\in[0,L_1]\times\cdots\times[0,L_d]}U_n(t_n,x)=\delta_0, 
\end{equation}
where $\delta_0>0$ is the constant such that \eqref{mu0-delta0} holds. By \eqref{fn-converge-f} and standard parabolic estimates, possibly up to a subsequence, the functions 
$$(t,x)\mapsto U_n(t+t_n,x)  $$
converge in $C_{loc}^{1,2}(\R^{d+1})$ as $n\to +\infty$ to an entire solution $0\leq U_{\infty}(t,x)\leq 1$ of \eqref{eq:main} such that $U_{\infty}(t,x)$ is nondecreasing in $t\in\R$, and $\max_{[0,L_1]\times\cdots\times[0,L_d]}U_{\infty}(0,\cdot)=\delta_0$. Furthermore, since for any $k\in \Z^d$, 
\begin{equation}\label{period-Un}
U_n\left(t+\frac{kL \cdot e}{ c^*(f_n)}, \,x+kL\right) = U_n(t,x) \  \,\hbox{ for }\, t\in\R,\,x\in\R^d,  
\end{equation}
passing to the limit as $n\to +\infty$, we obtain that for any $k\in \Z^d$, 
$$ U_{\infty}\left(t, x+kL\right)=U_{\infty}(t,x)  \ \,\hbox{ for } \,  t\in\R,\,x\in\R^d,$$
that is, $U_{\infty}$ is $L$-periodic in $x$. This implies in particular that $\max_{x\in\R^d} U_{\infty}(0,x)=\delta_0$. 
Then a simple comparison argument implies that $U_{\infty}(t,x) \leq h(t)$ for all $t\geq 0$, $x\in\R^d$, where $h(t)$ is the solution of the ODE \eqref{ode-h}. Thus, $U(t,x) \to 0$ as $t\to +\infty$. This is a contradiction with the fact $U_{\infty}(t,x)$ is nondecreasing in $t\in\R$.  
 
In the case where $0>c^*(f_n)\to-\infty$ as $n\to+\infty$, one derives a similar contradiction by changing the normalization condition  \eqref{normal-UR-1-hd} into 
\begin{equation}\label{normal-UR-2-hd}
\min_{x\in[0,L_1]\times\cdots\times[0,L_d]}U_n(s_n,x)=1-\delta_0
\end{equation}
with $s_n\in\R$. Therefore, we have proved that $(c^*(f_n))_{n\in\N}$ is bounded.    
\end{proof}

We are now ready to prove Lemma \ref{speed-tau-continue-hd}.

\begin{proof}[Proof of Lemma \ref{speed-tau-continue-hd}]
By Lemma \ref{bound-cn}, there exists $c_{\infty} \in\R$ such that, up to extraction of some subsequence, $c^*(f_n) \to  c_\infty$ as  $n\to +\infty$. It suffices to show that 
$$c_\infty =  c^*(f).$$

{\it Proof of statement $(i)$.}  Since $c^*(f)=0$, $U=U(x)$ is a stationary pulsating wave of~\eqref{eq:main}.
Assume by contradiction that $c_\infty \neq 0$. Without loss of generality, we assume that $c_{\infty}> 0$ and $c^*(f_n)>0$ for all large $n\in\N$; then similarly as in the proof of Lemma \ref{bound-cn}, for each large $n\in\N$, there is a unique $t_n$ such that the normalization condition \eqref{normal-UR-1-hd} holds (the case where $c_{\infty}<0$ and $c^*(f_n)<0$ for all large $n\in\N$ can be treated analogously by using the normalization condition \eqref{normal-UR-2-hd}).   
By \eqref{fn-converge-f} and standard parabolic estimates, up to extraction of a subsequence, the functions  $U_n(t+t_n,x)$ 
converge in $C_{loc}^{1,2}(\R^{d+1})$ as $n\to +\infty$ to an entire solution $0\leq U_{\infty}(t,x)\leq 1$ of \eqref{eq:main}. Clearly,  $U_{\infty}(t,x)$ is nondecreasing in $t\in\R$, and $\max_{[0,L_1]\times\cdots\times[0,L_d]}U_{\infty}(0,\cdot)=\delta_0$. Then the strong maximum principle implies that $0<U_{\infty}(t,x)<1$ for all $t\in\R$, $x\in\R^d$. Due to $c_\infty >0$, passing to the limit as $n\to +\infty$ in~\eqref{period-Un} yields that for any $k\in \Z^d$, 
\begin{equation}\label{limit-pulsating-hd}
U_{\infty}\left(t+\frac{kL \cdot  e}{c_\infty},\,x+kL\right)=U_{\infty}(t,x)  \ \,\hbox{ for } \,  t\in\R,\,x\in\R^d.
\end{equation}
Furthermore, by the monotonicity of $U_{\infty}$ in $t$ and standard parabolic estimates, passing to the limit as $t\to\pm\infty$ in \eqref{limit-pulsating-hd}, one finds two $L$-periodic steady states $u_{\pm}(x)$ of \eqref{eq:main} such that 
\begin{equation}\label{limit-tilde-u-hd}
 U_{\infty}(t,x) \to  u_{\pm}(x)  \,\hbox{ as } \,  t\to\pm\infty\,  \hbox{ locally uniformly for }\, x\in\R^d.  
 \end{equation}
It is easily seen that $0\leq u_{-}(x) \leq  u_{+}(x) \leq 1$ for $x\in\R^d$, and that 
$$\max_{[0,L_1]\times\cdots\times[0,L_d]}u_{-}(\cdot)  \leq \max_{[0,L_1]\times\cdots\times[0,L_d]}U_{\infty}(0,\cdot)=\delta_0 \leq \max_{[0,L_1]\times\cdots\times[0,L_d]}u_{+}(\cdot).$$ 
Then Lemma \ref{non-exist-hd} immediately implies that $u_{-}\equiv 0$. 

Moreover, by the strong maximum principle, either $u_{+}\equiv 1$ or $0<u_{+}< 1$. We will derive a contradiction in each of these two cases. The following change of variable will be useful in the proof below:
\begin{equation}\label{Uinfty-form}
\Psi (\xi, x):=U_\infty \left(\frac{x\cdot e-\xi}{c_\infty},x\right) \,\,\hbox{ for }\,\, (\xi,x)\in\R\times\R^d. 
\end{equation}

If $u_{+}\equiv 1$, then in view of \eqref{Uinfty-form}, one can check that, in the direction $ e$, $U_{\infty}(t,x)$ is a pulsating wave of \eqref{eq:main} connecting $0$ and $1$ in the sense of Definition~\ref{pulsating-tw}, and $c_\infty>0$ is its wave speed. Yet, remember that $U$ is a stationary wave of~\eqref{eq:main} connecting $0$ and $1$ in the same direction. This is impossible, due to the uniqueness of the wave speed of pulsating waves in a given direction (see Theorem~\ref{th:tw}).

Next, we find a contradiction in the case where $0<u_{+}< 1$, that is, $U_{\infty}(t,x)$ is a pulsating wave of \eqref{eq:main} connecting $0$ and $u_{+}$ with positive speed $c_\infty$ in the direction $e$.  

Let us first show that 
there exists some $\xi_*\in [0,\infty)$ such that 
\begin{equation}\label{Utau-Phi}
U(x) \geq  \Psi(x\cdot e+\xi_*,x) \,\,\hbox{ for all }\,\, x\in\R^d,   
\end{equation}
where $\Psi$ is the function defined in \eqref{Uinfty-form}.
Since $U$ is a stationary wave in the direction~$ e$, it follows from Lemma~\ref{exp-decay-hd-2} that there exist some $M_1>0$ and $C_1>0$ such that 
$$U(x) \geq  C_1 e^{-\lambda x\cdot e} \,\,\hbox{ for all }\,\, x\cdot e\geq M_1,$$ 
where $\lambda=\sqrt{-f '(0)}$. On the other hand, proceeding as in the proof of Lemma \ref{exp-decay-hd-1}, one can conclude that 
\begin{equation*}
\frac{\partial_\xi \Psi (\xi,x)}{\Psi(\xi,x)} \to -\tilde{\lambda}\, \hbox{ as }\, \xi\to + \infty \,\hbox{ uniformly in }\, x\in\R^d,
\end{equation*}
where $\tilde{\lambda}=(c_\infty +\sqrt{c_\infty^2-4 f '(0)})/2$. Since $c_\infty>0$, it is easily checked that $\tilde{\lambda} > \lambda$. Let $\epsilon>0$ be a small constant such that $\lambda <\tilde{\lambda}-\epsilon$. Then, from the discussion in Remark~\ref{r-exp-decay-hd}, one finds some $M_2>0$ and $C_2>0$ such that
$$ \Psi(x\cdot e,x) \leq  C_2 e^{-(\tilde{\lambda}-\epsilon)x\cdot e} \,\,\hbox{ for all }\,\, x\cdot e\geq M_2.$$
This means that $\Psi(x\cdot e,x)$ decays faster than $U(x)$ as $x\cdot e\to +\infty$. Thus, there exists some $M_3>0$ such that 
$$U(x) \geq   \Psi(x\cdot e,x)  \,\, \hbox{ for all }\,\, x\cdot e\geq M_3.$$

Now, suppose by contradiction that \eqref{Utau-Phi} is not true for any $\xi_* \geq 0$. Then in view of the above property and the fact that $\Psi$ is decreasing in its first variable, one can find sequences $(\xi_n)_{n\in\N}\subset [0,\infty)$ and $(x_n)_{n\in\N}\subset\R^d$ such that $\xi_n\to +\infty$ as $n\to +\infty$, and that for each $n\in\N$, $x_n \cdot e < M_3$ and 
\begin{equation*}
U(x_n) <  \Psi(x_n\cdot e+\xi_n,x_n).   
\end{equation*}
However, this is impossible. On the one hand, if the sequence $(x_n \cdot e)_{n\in\N}$ is bounded, then we have $\liminf_{n\to +\infty}U(x_n)>0$ (this follows from the definition of a stationary wave, recall also~\eqref{eq:exp-decay-later}), while $\Psi(x_n\cdot e+\xi_n,x_n)\to 0$ as $n\to +\infty$. On the other hand, if $(x_n \cdot e)_{n\in\N}$ is unbounded, then $x_n \cdot e\to-\infty$ as $n\to +\infty$, whence
$U(x_n) \to 1$ as $n\to +\infty$, while 
$0< \Psi(x_n\cdot e+\xi_n,x_n)<u_+(x_n)$ for all $n\in\N$ (recall that $u_+$ is a periodic function strictly between $0$ and $1$). Therefore, we can conclude that there exists some $\xi_*\in [0,\infty)$ such that~\eqref{Utau-Phi} holds.

In view of \eqref{Uinfty-form}, \eqref{Utau-Phi} implies that 
$$ U(x) \geq  U_{\infty}(-\xi_*/c_{\infty},x)  \,\,\hbox{ for all }\, \,x\in\R^d.$$ 
Then, applying the comparison principle to equation \eqref{eq:main}, we obtain 
$$U(x) \geq U_{\infty}(t-\xi_*/c_{\infty},x)  \,\,\hbox{ for all }\, \, t\geq 0,\,x\in\R^d.$$ 
Passing to the limit as $t \to +\infty$, we get that $U\geq u_+$ in $\R^d$, which clearly contradicts the fact that $U$ is a stationary wave connecting 1 and 0. This means that our assumption $c_{\infty}>0$ at the beginning was false. Therefore, we have proved that $c_\infty=c^*(f)=0$. \smallskip

{\it Proof of statement $(ii)$.} Without loss of generality, we may assume that $c^*(f)>0$ (the case where $c^*(f)<0$ can be treated similarly). In this case, $U$ is a pulsating wave of \eqref{eq:main} connecting $0$ and $1$ with positive speed. 
By the sign property of the wave speed (see \eqref{sign-property}), it follows that 
$$\int_0^1\int_{(0,L_1)\times \cdots\times(0,L_d)} f(x,u)dxdu >0.$$ 
Due to \eqref{fn-converge-f}, we have
$$\int_0^1\int_{(0,L_1)\times \cdots\times(0,L_d)} f_n(x,u)dxdu >0\,\,\hbox{ for all  large }\, n\in\N.$$ 
This together with the assumption $|c^*(f_n)|> \sigma$ for all $n\in\N$ and the sign property of the wave speed implies that $c^*(f_n)$ must be positive for all $n\in\N$, and hence, $c_\infty \geq  \sigma$.

Assume by contradiction that $c_\infty \neq c^*(f)$. Then either $c_\infty> c^*(f)>0$ or $0<c_\infty< c^*(f)$.   
Let us first derive a contradiction in the former case. The argument is actually quite similar to that in the proof of statement $(i)$; therefore we only give its outline.

Since $c^*(f_n)>0$ for all $n\in\N$, one can find a sequence $(t_n)_{n\in\N}\subset \R$ satisfying~\eqref{normal-UR-1-hd}. Then, up to extraction of some subsequence, the functions $(t,x)\mapsto U_n(t+t_n,x) $ converge in $C_{loc}^{1,2}(\R\times\R^d)$ to an entire solution $0< U_{\infty}(t,x)< 1$ of~\eqref{eq:main} such that $U_{\infty}(t,x)$ is nondecreasing in $t\in\R$, satisfies~\eqref{eq:main}, and $\max_{[0,L_1]\times\cdots\times[0,L_d]}U_{\infty}(0,\cdot)=\delta_0$.

Furthermore, $U_{\infty}$ connects two periodic steady states $u_{\pm}(x)$ as in \eqref{limit-tilde-u-hd}. Then $u_{-}\equiv 0$, and either $u_{+}\equiv 1$ or $0<u_{+}< 1$. Namely, either $U_{\infty}(t,x)$ is a pulsating wave of~\eqref{eq:main} connecting $0$ and $1$ with speed $c_{\infty}$ or it is a pulsating wave connecting~$0$ and~$u_{+}$ with speed~$c_{\infty}$. The former is impossible, since it contradicts the uniqueness of the speed of bistable pulsating waves in a given direction. The latter also leads to a contradiction, since by similar arguments to those used in the proof of statement $(i)$ (notice from Lemma \ref{exp-decay-hd-1} that the decay of a pulsating wave going to 0 becomes faster when the speed increases), one can find some $\xi_*\in [0,\infty)$ such that 
 $$U_{\infty}(t-\xi_*/c_{\infty},x)\leq  U(t,x) \ \,\hbox{ for }\, t\geq 0,\, x\in\R^d .$$
This indeed contradicts our assumption that $ c_\infty >c^*(f)$. 

If $c_\infty < c^*(f)$, one reaches a similar contradiction by changing the normalization condition  \eqref{normal-UR-1-hd} into  \eqref{normal-UR-2-hd}. Thus, we have proved that $c_\infty =c^*(f)$ if $c^*(f)>0$. In the case where $c^*(f)<0$, the proof is almost identical; therefore we omit the details. 
\end{proof}

%%%%%%%%%%%%%%%%%%%%%%%%%%%%%%%%%%%%%%%%%%%%%%%%%%%%%%%%%

\section{Admissible speeds in the one-dimensional case}\label{sec:1d}

In this section, we fix $d=1$ and prove Theorem \ref{arb-speed-1D}. The proof consists of two steps. First we 
construct a spatially periodic nonlinearity $f$ such that the corresponding equation is bistable in the sense of Definition \ref{bistable} and admits a pulsating wave with positive speed to the left, but a  pulsating wave with zero speed to the right. Secondly, by perturbing the above nonlinearity $f$ and using a rescaling argument, we prove that any pair of two speeds $c_L $, $c_R $ with $c_L c_R\geq 0$ is admissible, respectively, in the leftward and rightward directions.

\subsection{Simultaneous zero and positive speeds in opposite directions}

This subsection is devoted to the first step of the proof of Theorem~\ref{arb-speed-1D}. We first introduce $f_0$ of the balanced Allen-Cahn type
\begin{equation}\label{eq:balanced}
f_0 (u) = u(1-u)(u-1/2).
\end{equation}
It is well known (see, e.g., \cite{aw,fm1}) that the following equation 
\begin{equation}\label{Allen-Cahn}
\partial_t u=\partial_{xx}u+ f_0(u), \quad  t\in\R,\,x\in\R ,
\end{equation}
has a stationary traveling wave $u(t,x)=U_0(x)$ decreasing in $x$ and connecting $0$ and $1$ in the rightward direction, that is, $\lim_{x\to -\infty}U_0(x)=1$ and $\lim_{x\to -\infty}U_0(x)=0$, and that $U_0$ is the unique (up to shifts) rightward traveling wave.
Since equation \eqref{Allen-Cahn} is invariant under the spatial reflection $x\to -x$, it is easily seen that $U_0(-x)$ is the unique (up to shifts) leftward traveling wave connecting $0$ and $1$. We normalize $U_0$ so that  $U_0 (0) = 1/2$. 

Let $E$ be a subset of $\R \times [0,1]$ defined as follows:
\begin{equation*}%\label{define-E}
E=\left\{(x,u):\, U_0 \left(x\right) \leq u < U_0 \left(x-1\right) \right\}. 
\end{equation*}
Since $U_0(x)$ is decreasing in $x\in\R$, it is clear that for any $(x,u)\in \R\times (0,1)$, there exists a unique $m\in\Z$ such that $(x-m,u)\in E$.

Let $\delta_0 \in (0,1/4)$ be a constant such that 
\begin{equation}\label{choose-delta0}
f_0'(u) \leq \frac{\max\{f_0'(0),\,f_0'(1)\}}{2} < 0\,\,\hbox{ for all }\,\, u\in [-\delta_0,\delta_0]\cup [1-\delta_0,1+\delta_0]. 
\end{equation}
Now we take a smooth function $\chi: E \to [0,1]$ satisfying
$$ \chi(x,u)=0 \,\hbox{ for } \,x \in\R,\,u\in [0,\delta_0]\cup[1-\delta_0,1], $$
$$\chi=0 \,\hbox{ on }  \partial E,$$
and
$$\chi>0 \,\hbox{ in }\, \left\{(x,u)\in E \backslash \partial E: 2\delta_0\leq u\leq 1-2\delta_0 \right\}.$$
Next, we extend $\chi$ to any $(x,u)\in \R\times [0,1]$ in the following way
$$ \chi(x,u):= \chi(x-m,u) $$
where $m$ is the unique integer such that $(x-m,u)\in E$. Without loss of generality, we can also assume that 
$$\partial_{x} \chi(x,u)=0\,\,\hbox{ if } \,\, (x-m,u)\in \partial E$$
and that 
$$ \chi(x,u)=0 \,\,\hbox{ for all } \,\,x \in\R,\,u\in (-\infty,0)\cup (1,+\infty).$$
Then the resulting function 
$$\chi: \R\times \R \to [0,1] $$ 
is of class $C^1$ and it is $1$-periodic with respect to $x$.

We then define 
$$f_{\sigma}(x,u) = f_0 (u)+ \sigma\chi (x,u)\,\, \hbox{ for }\,\, x\in\R,\, u\in\R,$$
where $\sigma$ is a positive constant to be determined later. We now consider the corresponding spatially periodic equation
\begin{equation}\label{eq-sigma}
\partial_t u = \partial_{xx} u + f_{\sigma}(x,u)\,\, \hbox{ for }\,\, t\in\R,\, x\in\R,
\end{equation}
and show the following:
\begin{prop}\label{prop-zero-pos}
There exists $\sigma_*>0$ such that for any $0<\sigma<\sigma_*$, the following statements hold true:
\begin{itemize}
\item[$(i)$] equation~\eqref{eq-sigma} is of the spatially periodic bistable type in the sense of Definition~\ref{bistable};
\item[$(ii)$] equation~\eqref{eq-sigma} has a leftward pulsating wave with speed $c^*_L >0$ and a rightward pulsating wave with speed $c^*_R = 0$.
\end{itemize}
\end{prop}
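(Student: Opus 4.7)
My plan is to prove the proposition in three stages: checking the bistable structure of $f_\sigma$ for small $\sigma$, exhibiting $U_0$ as a stationary rightward pulsating wave (from which $c_R^* = 0$ follows by uniqueness), and finally combining the monotonicity of $c^*$ with respect to $f$, the sign property \eqref{sign-property}, and a sliding argument to obtain $c_L^* > 0$.

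The bistability is a perturbation from $f_0$. Since $f_\sigma \equiv f_0$ on $\R \times ([0,\delta_0] \cup [1-\delta_0,1])$ by construction of $\chi$, the principal eigenvalues at the $1$-periodic steady states $0$ and $1$ are just $f_0'(0)$ and $f_0'(1)$, both negative, so these two states remain linearly stable. A phase-plane analysis of $y'' + f_0(y) = 0$ shows that the only $1$-periodic orbits contained in $[0,1]$ are the constants $0,\,1/2,\,1$, because orbits around the center $(1/2,0)$ have period at least $2\pi/\sqrt{f_0'(1/2)} = 4\pi > 1$. For small $\sigma$, a compactness-and-continuity argument will then show that any $1$-periodic steady state of~\eqref{eq-sigma} strictly between $0$ and $1$ is a small perturbation of the constant $1/2$, and remains linearly unstable since the principal eigenvalue stays close to $f_0'(1/2) > 0$. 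For the rightward direction, the crucial observation is that for every $x \in \R$ the pair $(x, U_0(x))$ lies on the lower boundary $u = U_0(x)$ of $E$, hence on $\partial E$ where $\chi$ vanishes. Consequently $\chi(x, U_0(x)) \equiv 0$, so $U_0'' + f_\sigma(x, U_0) = U_0'' + f_0(U_0) = 0$, making $U_0$ a stationary pulsating wave with the correct limits; Theorem~\ref{th:tw} then forces $c_R^* = 0$.

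The third step requires the most care. Lemma~\ref{speed-tau-monotone-hd} applied to $f_0 \leq f_\sigma$ gives $c_L^* \geq c_L^*(f_0) = 0$, and since $\int_0^1\!\int_0^1 f_\sigma = \sigma\int_0^1\!\int_0^1 \chi > 0$, the sign property reduces the task to showing $c_L^* \neq 0$. I argue by contradiction: assuming $c_L^* = 0$, there exists a stationary leftward wave $V$ with $V(-\infty)=0$ and $V(+\infty)=1$. I compare $V$ with the family of reflected translates $\tilde V_s(x) := U_0(-x-s)$, each solving $\tilde V_s'' + f_0(\tilde V_s) = 0$ and therefore satisfying
\[
\tilde V_s''(x) + f_\sigma(x, \tilde V_s(x)) = \sigma \chi(x, U_0(-x-s)) \geq 0.
\]
A direct examination of where the curve $x \mapsto (x, U_0(-x-s))$ meets $\partial E$ modulo integer translations shows this happens only on the discrete set $x \in (\Z - s)/2$, so $\chi(\cdot, U_0(-\cdot-s)) > 0$ on a dense open subset of $\R$; in particular $\tilde V_s$ is a strict subsolution that cannot be a solution. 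Using the exponential decay of $V$ from Lemma~\ref{exp-decay-hd-2} and the matching classical decay of $U_0$, I can choose $s$ negative enough so that $\tilde V_s \leq V$ pointwise, then slide $s$ upward to the supremum $s_*$ of admissible shifts. The strong maximum principle applied to $w := V - \tilde V_{s_*}$ (which satisfies a linear inequality $w'' + c(x) w \leq 0$ with bounded $c$) will force $w \equiv 0$, i.e., $V \equiv \tilde V_{s_*}$; subtracting the two equations then yields $\sigma \chi(\cdot, U_0(-\cdot-s_*)) \equiv 0$ on $\R$, contradicting its density of positivity.

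The main obstacle will be the sliding step, because $V$ and every $\tilde V_s$ share the same leading exponential decay rates $\sqrt{-f_0'(0)}$ and $\sqrt{-f_0'(1)}$ at $-\infty$ and $+\infty$. As a result, the critical shift $s_*$ need not be realized by a finite-point contact: the comparison at one of the infinities may degenerate through matching of the leading coefficients. Resolving that case will likely require a next-order expansion of $V$ and $\tilde V_{s_*}$ near that infinity, in the spirit of Berestycki-Hamel-type sliding arguments on unbounded cylinders, in order to still extract a genuine contact point and invoke the strong maximum principle.
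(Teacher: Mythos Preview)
Your treatment of statement~$(i)$ and of $c_R^*=0$ is essentially the same as the paper's: the paper also observes that $(x,U_0(x))\in\partial E$ forces $\chi(x,U_0(x))\equiv 0$, so that $U_0$ is a stationary rightward wave, and then invokes uniqueness of the speed. Your bistability argument via phase-plane analysis of $f_0$ is correct in dimension~$1$ (the minimal period $4\pi$ computation is right), though the paper instead runs a contradiction/compactness argument on the principal eigenvalue that does not use the ODE structure and therefore generalizes to $d\geq 2$.

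The genuine gap is in your argument for $c_L^*>0$. You correctly identify the obstacle: at the critical shift $s_*$, the functions $V$ and $\tilde V_{s_*}$ share the same exponential decay rates $\sqrt{-f_0'(0)}$ and $\sqrt{-f_0'(1)}$ at $\mp\infty$, so the contact may occur only at infinity and the elliptic strong maximum principle cannot be applied. Your proposed fix via ``next-order expansion'' is vague and not standard for this situation; it is unclear how to make it work, since a priori you have no control on the subleading behaviour of the stationary wave~$V$.

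The paper sidesteps this difficulty entirely by replacing your purely elliptic sliding with a \emph{parabolic} Fife--McLeod argument. It first constructs a drifting subsolution of the form $U_0(-x+\eta(t))-q(t)$ with $\eta$ increasing and $q$ exponentially decaying, yielding: if $u_0(x)\geq U_0(-x+x_0)-\varepsilon$ then the solution satisfies $u(t,x;u_0)\geq U_0(-x+x_0+K\varepsilon)-\varepsilon e^{-\mu t}$ for a fixed constant $K$. This converts a small $L^\infty$ gap~$\varepsilon$ into a \emph{definite shift}~$K\varepsilon$. One then shows $U_L(x)>U_0(-x+\xi_*)$ strictly (parabolic strong maximum principle, using that $U_0(-\cdot+\xi_*)$ is a strict subsolution), and finally exploits the smallness of $|U_0'|$ in the tails: choosing $C$ large so that $\hat\delta:=\sup_{|x|\geq C-1}|U_0'(-x+\xi_*)|\leq 1/(2K)$, one can slide $\xi_*$ down by a small $\hat\varepsilon$ on the compact set $[-C,C]$ (by strict inequality), while on $\{|x|>C\}$ this creates an error at most $\hat\varepsilon\hat\delta$. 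Feeding this into the drifting subsolution returns a net improvement of $\hat\varepsilon - K\hat\varepsilon\hat\delta\geq\hat\varepsilon/2$ in the shift, contradicting the definition of~$\xi_*$. This mechanism is precisely what your elliptic sliding lacks: it manufactures the extra room needed when the decay rates match.
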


The key point of the proof of Proposition~\ref{prop-zero-pos} is that  $U_0(x)$ is a stationary front of \eqref{eq-sigma}, which blocks the propagation in the right direction, while $U_0 (-x + x_0)$ is a strict subsolution for any shift $x_0 \in \R$ (this is due to the fact that, by our choice of $\chi$, any shift of $U_0 (-x)$ has to intersect the support of $\chi$), which forces propagation with positive speed in the left direction. It should be pointed out that the above properties may hold under more general conditions on the nonlinearity $f$ and our construction is in no way unique. In particular, we mention that the possibility of blocking propagation in only one of two opposite directions was also explored in the context of periodic domains with holes~\cite{dr}.

We now give the proof of Proposition \ref{prop-zero-pos}. We start by checking that \eqref{eq-sigma} is of the bistable type with period $L=1$. It is straightforward to check that the steady states $u\equiv 0$ and $u\equiv 1$ are linearly stable, since $\chi$ is null on a neighborhood of $u = 0$ and $u=1$. Then, the Dancer-Hess connecting orbit theorem implies that there exists at least one $1$-periodic steady state~$\bar{u}$ such that $0<\bar{u}<1$ in $\R$. The following lemma shows that any such $\bar{u}$ is linearly unstable provided that $\sigma$ is small enough.

\begin{lem}\label{inter-unstable}
There exists $\sigma_* >0$ such that, for any $0 < \sigma <\sigma_*$ and any $\bar{u}$ a $1$-periodic steady state of \eqref{eq-sigma} with $0<\bar{u}<1$, then $\lambda_1(\sigma,\bar{u})>0 $ where $\lambda_1 (\sigma, \bar{u})$ denotes the principal eigenvalue of 
\begin{equation*}%\label{p-eigen}
\psi''+\partial_uf_{\sigma}(x,\bar{u}(x))\psi=\lambda\psi\hbox{ in }\R,\quad \psi>0\hbox{ in }\R,\quad \psi\hbox{ is }1\hbox{-periodic}.
\end{equation*}
\end{lem}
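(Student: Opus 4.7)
The plan is a contradiction/compactness argument. Suppose the conclusion fails: then there exist sequences $\sigma_n \to 0^+$ and $1$-periodic steady states $\bar{u}_n$ of \eqref{eq-sigma} with $\sigma = \sigma_n$, $0 < \bar{u}_n < 1$, and $\lambda_1(\sigma_n, \bar{u}_n) \leq 0$. First, by standard elliptic estimates together with the uniform bound $0 \leq \bar{u}_n \leq 1$, I would extract a subsequence converging in $C^2_{loc}(\R)$, and hence uniformly on $\R$ by 1-periodicity, to a 1-periodic $C^2$ solution $\bar{u}_\infty$ of the limit autonomous ODE $u'' + f_0(u) = 0$ with $0 \leq \bar{u}_\infty \leq 1$.

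The heart of the argument is then to show that every such $\bar{u}_\infty$ other than the extremal constants $0$ and $1$ is linearly unstable in the homogeneous limit; that is, the principal eigenvalue $\lambda_1^\infty$ of the linearization $\psi'' + f_0'(\bar{u}_\infty(x))\psi = \lambda \psi$ on 1-periodic functions is strictly positive. If $\bar{u}_\infty \equiv c$ is constant, then $f_0(c) = 0$ forces $c \in \{0, 1/2, 1\}$, and $\lambda_1^\infty = f_0'(c)$ equals $f_0'(1/2) = 1/4 > 0$ when $c = 1/2$, while being strictly negative for $c \in \{0, 1\}$. If $\bar{u}_\infty$ is nonconstant, then differentiating the ODE shows that $v := \bar{u}_\infty'$ is a nontrivial 1-periodic solution of the linearized equation, hence an eigenfunction with eigenvalue $0$; since $\bar{u}_\infty$ attains both a maximum and a minimum, it must strictly increase on some interval and strictly decrease on another, so $v$ genuinely changes sign. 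Because the operator $\psi \mapsto \psi'' + f_0'(\bar{u}_\infty)\psi$ is self-adjoint on 1-periodic functions, a sign-changing eigenfunction at eigenvalue $0$ forces the largest (principal) eigenvalue $\lambda_1^\infty$ to be strictly greater than $0$.

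Next I would rule out $\bar{u}_\infty \equiv 0$ and $\bar{u}_\infty \equiv 1$ directly via Lemma \ref{non-exist-hd}. If $\bar{u}_\infty \equiv 0$, uniform convergence yields $0 < \bar{u}_n \leq \delta_0$ on $\R$ for $n$ large. Since $\chi \equiv 0$ on $\R \times [0, \delta_0]$, one has $f_{\sigma_n}(x, u) = f_0(u)$ there, and the bound \eqref{mu0-delta0} holds for $f_{\sigma_n}$ on $[0, \delta_0]$ by \eqref{choose-delta0}; the lemma applied to the stationary solution $\bar{u}_n$ of \eqref{eq-sigma} then forces $\bar{u}_n \equiv 0$, contradicting $\bar{u}_n > 0$. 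The case $\bar{u}_\infty \equiv 1$ is symmetric.

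In the remaining cases where $\lambda_1^\infty > 0$, I would conclude by the continuity of the principal eigenvalue of self-adjoint periodic Schr\"odinger-type operators under uniform convergence of the zero-order coefficient: since $\bar{u}_n \to \bar{u}_\infty$ uniformly and $f_{\sigma_n} \to f_0$ in $C^1(\R^2)$, the potentials $\partial_u f_{\sigma_n}(x, \bar{u}_n(x))$ converge uniformly to $f_0'(\bar{u}_\infty(x))$, and hence $\lambda_1(\sigma_n, \bar{u}_n) \to \lambda_1^\infty > 0$, contradicting $\lambda_1(\sigma_n, \bar{u}_n) \leq 0$ for large $n$. The step I expect to be most delicate is the nonconstant-limit case: one must genuinely verify that $\bar{u}_\infty'$ is sign-changing rather than merely vanishing on a subinterval, which is where the combination of 1-periodicity and nonconstancy of $\bar{u}_\infty$ is crucially used.
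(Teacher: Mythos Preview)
Your proposal is correct and follows essentially the same contradiction/compactness strategy as the paper: pass to a limit steady state $\bar{u}_\infty$ of the homogeneous equation, rule out the extremal constants, and then contradict $\tilde\lambda_1\le 0$ via $f_0'(1/2)>0$ in the constant case and the sign-changing eigenfunction $\bar{u}_\infty'$ in the nonconstant case. The only cosmetic differences are that the paper extracts the eigenfunctions $\psi_n$ and passes to the limit directly (rather than invoking abstract continuity of the principal eigenvalue), and it excludes $\bar{u}_\infty\equiv 0,1$ by integrating $\bar{u}_n''+f_0(\bar{u}_n)=0$ over a period instead of citing Lemma~\ref{non-exist-hd}.
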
  
For later use (see Lemma \ref{exist-pf-hd}), we prove Lemma~\ref{inter-unstable} by arguments that also apply in the multi-dimensional case.
 \begin{proof}[Proof of Lemma \ref{inter-unstable}]
 Assume by contradiction that there are some sequences $(\sigma_n)_{n\in\N}$ in $(0, \infty)$, $(\bar{u}_n)_{n\in\N}$ and~$(\psi_n)_{n\in\N}$ in $C^2(\R)$ such that $\sigma_n\to 0$ as $n\to +\infty$ and, for each $n\in\N$, the functions $\bar{u}_n$ and $\psi_n$ respectively satisfy
\begin{equation*}
\bar{u}_n''+f_{\sigma_n}(x,\bar{u}_n)=0\ \hbox{ in }\R,\ \ \bar{u}_n\hbox{ is }1 \hbox{-periodic, }\ 0<\bar{u}_n <1\hbox{ in }\R,
\end{equation*}
\begin{equation*}
\psi_n''+\partial_uf_{\sigma_n}(x,\bar{u}_n)\psi_n=\lambda_1(\sigma_n,\bar{u}_n)\psi_n\ \hbox{ in }\R,\ \ \psi_n\hbox{ is }1 \hbox{-periodic},\ \ \psi_n>0\hbox{ in }\R,
\end{equation*}
with $\lambda_1(\sigma_n,\bar{u}_n)\leq 0$. Since
\begin{equation*}
\min_{x\in\R,\,u\in [0,1]} \partial_uf_{\sigma_n}(x,u) \leq \lambda_1({\sigma_n},\bar{u}_n) \leq \max_{x\in\R,\,u\in [0,1]}\partial_uf_{\sigma_n}(x,u),
\end{equation*}
the sequence $\big(\lambda_1(\sigma_n,\bar{u}_n)\big)_{n\in\N}$ is then bounded. Up to extraction of some subsequence, there is a real number $\tilde{\lambda}_1\leq 0$ such that $\lambda_1(\sigma_n,\bar{u}_n)\to\tilde{\lambda}_1$ as $n\to +\infty$. 

By standard elliptic estimates, there is a $C^2(\R)$ function $ u_{\infty}$ such that, up to extraction of some subsequence, $\bar{u}_n\to u_{\infty}$ in $C^2(\R)$ as $n\to +\infty$, and the function $u_{\infty}$ satisfies 
$$u_{\infty}''+ f_0 (u_{\infty})=0 \ \hbox{ in }\R,\ \ u_{\infty}\hbox{ is }1 \hbox{-periodic, }\ 0\leq u_{\infty} \leq 1\hbox{ in }\R.$$ 
Similarly, by normalizing $\psi_n$ in such a way that $\max_{x\in\R} \psi_n(x)=1$, one finds a nonnegative $C^2(\R)$ function $\psi_{\infty}$ such that, possibly up to a further subsequence, $\psi_n\to\psi_{\infty}$ in $C^2(\R)$ as $n\to +\infty$, and the function $\psi_{\infty}$ satisfies 
\begin{equation}\label{eq-psi-infty}
\psi_{\infty}''+ f_0 ' (u_{\infty})\psi_{\infty}=\tilde{\lambda}_1\psi_\infty\ \hbox{ in }\R,\ \ \psi_{\infty}\hbox{ is }1 \hbox{-periodic},\ \  \max_{x\in\R}\psi_{\infty}(x) =1.
\end{equation}
By the strong maximum principle, we see that the function $\psi_\infty$ is also positive.

Next, we observe that $u_{\infty}  \not\equiv  0$ and $u_{\infty}  \not\equiv  1$. 
Otherwise, for sufficiently large $n$, there would hold $\bar{u}_n \in (0,\delta_0] \cup [1-\delta_0,1)$,
where $\delta_0\in (0,1/4)$ is the constant given in \eqref{choose-delta0}. Since each $\chi$ is a null on $(0,\delta_0] \cup [1-\delta_0,1)$, by the definition of $f_{\sigma}$, $\bar{u}_n$ would satisfy 
$$\bar{u}''_{n}+ f_0 (\bar{u}_{n})=0 \ \hbox{ in }\R,\ \ \bar{u}_{n}\hbox{ is }1 \hbox{-periodic.}$$ 
Integrating the above equation over $[0,1]$, one obtains 
$\int_0^1 f_0(\bar{u}_n(x))dx=0$,
which is a contradiction with the fact that $f_0<0$ on $(0,\delta_0]$ and $f_0>0$ on $[1-\delta_0,1)$.
Therefore, $u_{\infty} \not\equiv 0$ and $u_{\infty} \not\equiv 1$.

As a consequence, two cases may happen: either $u_{\infty}$ is a constant strictly between $0$ and $1$, i.e. $u_{\infty}\equiv \frac{1}{2}$, or $u_{\infty}$ is a non-constant periodic solution. However, any such steady state of a homogeneous bistable equation is linearly unstable, which contradicts the fact that $\tilde{\lambda}_1 \geq 0$. For the sake of completeness, we provide the details below.

On the one hand, if $u_{\infty}\equiv \frac{1}{2}$, then we have $\tilde{\lambda}_1= f_0 '(u_{\infty})>0$, which indeed contradicts the assumption that $\tilde{\lambda}_1\leq 0$. On the other hand, if~$u_{\infty}$ is a non-constant periodic solution, then $u'_{\infty}$ is a sign-changing periodic solution of 
$$(u'_{\infty})''+  f_0 ' (u_{\infty})u'_{\infty} =0 \ \hbox{ in }\R.$$
This means that $0$ is an eigenvalue of the linear operator $\mathcal{L}\varphi:= \varphi'' +f_0 ' (u_{\infty})\varphi$ in the space of periodic functions. Moreover, it follows from \eqref{eq-psi-infty} that $\tilde{\lambda}_1$ is an eigenvalue of the operator with positive eigenfunction. By the Krein-Rutmann theory, $\tilde{\lambda}_1$ is the principal eigenvalue which is maximal and simple. This implies that $\tilde{\lambda}_1>0$, which is again a contradiction. The proof of Lemma \ref{inter-unstable} is thus complete.
 \end{proof}

Let $\sigma_*$ be the positive constant provided by the above lemma. By Theorem \ref{th:tw}, for every $0<\sigma<\sigma_*$, equation \eqref{eq-sigma} has a leftward pulsating wave $U_L$ with speed $c^*_L$ and rightward pulsating wave $U_R$ with speed $c^*_R$. Furthermore, since 
$$\int_{0}^1\int_{0}^1 f_{\sigma}(x,u)dxdu>0,$$
it follows from the sign property of wave speeds (see \eqref{sign-property}) that both $c^*_L$ and $c^*_R$ are nonnegative. The next lemma deals with the signs of these two speeds and completes the proof of Proposition \ref{prop-zero-pos}.

\begin{lem}\label{sign-rightleft}
The rightward pulsating wave speed $c^*_R$ must be zero, i.e. there is no non-stationary pulsating wave in the right direction, and the leftward pulsating wave $U_L$ has speed $c^*_L >0$.
\end{lem}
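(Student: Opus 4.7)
The plan breaks into the two assertions of the lemma.

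For the claim $c^*_R = 0$, I would simply exhibit $U_0(x)$ as a stationary rightward pulsating wave for $f_\sigma$. By construction of $\chi$, the graph $\{(x,U_0(x))\}$ lies on $\partial E$, so $\chi(x,U_0(x)) \equiv 0$. Consequently,
\[
U_0''(x) + f_\sigma(x, U_0(x)) = U_0''(x) + f_0(U_0(x)) = 0,
\]
and together with $U_0(-\infty)=1$, $U_0(+\infty)=0$ this shows that $U_0$ is a stationary rightward pulsating wave in the sense of Definition \ref{pulsating-tw}. The uniqueness part of Theorem \ref{th:tw} then forces $c^*_R = 0$.

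For $c^*_L > 0$, the sign property \eqref{sign-property} already gives $c^*_L \ge 0$. Arguing by contradiction, I would assume $c^*_L = 0$ and let $V$ denote the resulting stationary leftward wave, with $V(-\infty)=0$ and $V(+\infty)=1$. The key input is that, for every $x_0 \in \R$, the reflected profile $w_{x_0}(x) := U_0(-x + x_0)$ is a strict stationary subsolution of \eqref{eq-sigma}: using $U_0'' = -f_0(U_0)$ one computes $w_{x_0}''(x) + f_\sigma(x, w_{x_0}(x)) = \sigma\,\chi(x, w_{x_0}(x)) \ge 0$, with strict inequality on a nontrivial open set because, as $x$ varies, the graph of $w_{x_0}$ necessarily enters the region where $\chi>0$ (the interior of the periodically extended $E$, at heights in $[2\delta_0, 1-2\delta_0]$). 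Since $f_\sigma \equiv f_0$ near $u=0$ and $u=1$, Lemmas \ref{exp-decay-hd-1} and \ref{exp-decay-hd-2} imply that both $V$ and $w_{x_0}$ decay with the same rates $\sqrt{-f_0'(0)}$ at $-\infty$ and $\sqrt{-f_0'(1)}$ at $+\infty$. For $x_0$ large enough positive, the exponential prefactors of $w_{x_0}$ at $\pm\infty$ become much smaller than those of $V$, while $w_{x_0}$ tends to $0$ uniformly on compact sets; thus $w_{x_0} < V$ everywhere. Define $x_0^* := \inf\{\,x_0 \in \R : w_{x_0} \le V \text{ on } \R\,\}$, which is finite.

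At $x_0 = x_0^*$, continuity yields $w_{x_0^*} \le V$, and by the definition of $x_0^*$ combined with the asymptotic comparisons, the infimum of $\phi := V - w_{x_0^*}$ is attained at some finite $x_* \in \R$. Subtracting the equations satisfied by $V$ and $w_{x_0^*}$ gives
\[
\phi''(x) + \partial_u f_\sigma(x,\eta(x))\,\phi(x) = -\sigma\,\chi(x, w_{x_0^*}(x)) \le 0
\]
for some $\eta$ between $w_{x_0^*}$ and $V$. Since $\phi \ge 0$ and $\phi(x_*) = 0$, the strong maximum principle forces $\phi \equiv 0$. Then $V \equiv w_{x_0^*}$ implies $\chi(\cdot,V) \equiv 0$ on $\R$, contradicting the strict subsolution property established above. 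The main obstacle is the step ensuring that the infimum of $V - w_{x_0^*}$ is attained at a finite point: since the exponential rates of $V$ and $w_{x_0}$ at $\pm\infty$ are identical, at the critical shift $x_0^*$ one must rule out that the infimum is only approached in the limit. I would handle this either by a refined second-order asymptotic comparison, or more robustly by perturbing the subsolution (for instance, replacing $w_{x_0}$ by $w_{x_0} - \varepsilon\,\psi(x)$ for an exponentially decaying cutoff $\psi$) to force a finite touching point and then passing to the limit $\varepsilon \to 0^+$.
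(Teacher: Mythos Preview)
Your argument for $c^*_R=0$ is correct and identical to the paper's.

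For $c^*_L>0$ you take an elliptic sliding route, whereas the paper uses a parabolic (Fife--McLeod perturbation) argument. The obstacle you flag is real and is exactly why the paper does not follow your route: since $V$ and $w_{x_0}$ share the same exponential decay rates at both ends, at the critical shift $x_0^*$ nothing forces $V-w_{x_0^*}$ to have a finite minimizer, and Lemma~\ref{exp-decay-hd-2} only gives two-sided bounds with possibly different constants, so a direct comparison of prefactors does not rule out touching at infinity. Your proposed fixes (second-order asymptotics or a $-\varepsilon\psi$ perturbation) are not carried out, and it is not clear how to make the perturbed function a subsolution while simultaneously breaking the tie at infinity.

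The paper proceeds differently. First, the strict inequality $U_L>U_0(-\cdot+\xi_*)$ is obtained by time evolution rather than the elliptic strong maximum principle: since $U_0(-\cdot+\xi_*)$ is a strict subsolution, the solution starting from it increases in time and is bounded above by $U_L$. Second, rather than seeking a finite touching point, the paper uses this strict inequality only on a large compact interval $[-C,C]$ (where it buys a small shift $\hat\varepsilon$), controls the error outside via the smallness of $|U_0'|$ there (giving an $O(\hat\varepsilon\hat\delta)$ loss), and absorbs this loss through a Fife--McLeod type stability estimate for the parabolic problem (their Step~1): if $u_0\ge U_0(-\cdot+x_0)-\varepsilon$ then the flow satisfies $u(t,\cdot)\ge U_0(-\cdot+x_0+K\varepsilon)-\varepsilon e^{-\mu t}$. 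Choosing $C$ so that $K\hat\delta\le 1/2$ yields a net improvement of the shift by $\hat\varepsilon/2$, contradicting the definition of $\xi_*$. This dynamical trade-off makes the matching decay rates irrelevant, which is the idea your approach is missing.
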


\begin{proof}
By our construction of $f_{\sigma}$, it is straightforward to check that $U_0(x)$ is a rightward stationary wave of \eqref{eq-sigma}. Then $c^*_R = 0$ from the uniqueness of speeds of bistable pulsating waves (see Theorem \ref{th:tw}).

Next, suppose by contradiction that $c^*_L$ is not positive. Then $c^*_L=0$, that is, there exists $U_L=U_L(x)$ a stationary wave. We will derive a contradiction by several steps.\smallskip

{\it Step 1:  Construction of a subsolution.} Let $\mu=- \max \{f_0'(0),\,f_0'(1)\}/2$ and $\delta_0>0$ be a positive constant such that \eqref{choose-delta0} holds.  Then we have 
\begin{equation}\label{eq-mu} 
f_0 '(u) \leq -\mu  \ \  \hbox{ for all }\ \   u\in [-\delta_0,\delta_0]\cup [1-\delta_0,1+\delta_0].
\end{equation}
In this step, we show that there exists a positive constant $K$ such that if for some $\varepsilon \in (0,\delta_0/2]$ and $x_0\in\R$, 
$$u_0(x)\geq  U_0(-x+x_0)-\varepsilon \ \ \hbox{ for } \ \ x\in\R, $$
then $u(t,x;u_0)$ the corresponding solution of~\eqref{eq-sigma} satisfies
\begin{equation*} 
u(t,x;u_0) \geq  U_0(-x+x_0+K\varepsilon)-\varepsilon e^{-\mu t}\ \ \hbox{ for }\ \ t\geq 0,\, \,x\in\R.  
\end{equation*}
 The proof follows from similar arguments to those used in the proof of Lemma \ref{sub-super-nonzero}. For completeness and also for convenience of later applications, we include the details below.   
 
Set
$$v(t,x):=U_0(-x+\eta(t))-q(t)\ \ \hbox{ for } \ \  t>0,\, \,x\in\R,$$
where $\eta$ and $q$ are $C^1([0,\infty))$ functions  to be determined later such that 
$$\eta(0)=x_0,\ \  \eta'(t)>0 \,\hbox{ for all }\, t\geq 0;  \quad q(0)=\varepsilon,\ \  0<q(t)\leq q(0) \,\hbox{ for all }\, t\geq 0.$$ 
For all $(t,x)\in(0,\infty)\times\R$, define
$$N(t,x):=\partial_t v(t,x)-\partial_{xx}v(t,x) -f_{\sigma}(x,v(t,x)).$$ 
Since $U_0$ is a solution of \eqref{Allen-Cahn}, a straightforward calculation gives that for $(t,x)\in(0,\infty)\times\R$,
$$ N(t,x)=-q'(t)+\eta'(t)U_0'(-x+\eta(t))+ f_0 (U_0(-x+\eta(t)))-f_{\sigma}(x,v(t,x)).$$
Notice that $U'_0(x)<0$ for $x\in\R$. Then, for $(\varepsilon,t,x)\in (0,\delta_0/2]\times(0,\infty)\times\R$ with $U_0(-x+\eta(t))\in [ 0,\delta_0/2]\cup [1-\delta_0/2,1]$, it follows from the definition of $f_{\sigma}$ and \eqref{eq-mu} that 
$$ N(t,x)\leq -q'(t)+\eta'(t)U_0'(-x+\eta(t))-\mu q(t)\leq  -q'(t)-\mu q(t).$$
On the other hand, due to the monotonicity of $U_0$, there is a constant 
$$\rho:=-\max_{\delta_0/2\le U_0(y)\le1-\delta_0/2}U_0'(y)>0,$$  
such that if $(t,x)\in(0,\infty)\times\R$ with $U_0(-x+\eta(t))\in [\delta_0/2, 1-\delta_0/2]$, then 
\begin{equation*}
\left.\begin{array}{ll} 
\medskip N(t,x)\!\!\!& \leq -\rho\,\eta'(t)-q'(t)+f_0 (U_0 (-x+\eta(t)))-f_{\sigma}(x,v(t,x))  \\ 
\medskip & \leq -\rho\,\eta'(t)-q'(t)+f_0 (U_0 (-x+\eta(t)))-f_0 (U_0 (-x+\eta(t))-q(t))  \\ 
& \leq  -\rho\,\eta'(t)-q'(t)+Mq(t), 
\end{array}\right. 
\end{equation*}
where $M=\max_{u \in [0,1]}| f_0 ' (u)|$, and the second inequality follows from the fact that $\chi$ is nonnegative.

Let us then choose $q(t)$ and $\eta(t)$ such that
\begin{equation*}
q(0)=\varepsilon,\quad q'(t)=-\mu q(t)\,\hbox{ for all } \, t\ge0,
\end{equation*}
and
$$ \eta(0)=x_0,\quad \eta'(t)=\frac{M+\mu}{\rho}q(t)\, \hbox{ for all } \, t\ge0. $$
It is then clear that $N(t,x)\leq 0$ for all $(t,x)\in (0,\infty)\times \R$. 
Taking a positive constant $K$ such that $K\geq (M+\mu)/(\mu\rho)$, 
one can easily derive the desired result by using the comparison principle. This ends the proof of {\it Step 1}. \smallskip

{\it Step 2: Comparison of $U_L(\cdot)$ and a shift of $U_0(-\cdot)$.} It is clear that for any  $\varepsilon\in (0,\delta_0/2]$, there exists some $x_0\in\R$ such that $U_L(x) \geq U_0(-x+x_0)-\varepsilon$ for $x\in\R$.   
Then by {\it Step 1}, we obtain
$$U_L(x) \geq U_0(-x+x_0+K\varepsilon)-\varepsilon e^{-\mu t}\, \, \hbox{ for }\, \, t\geq 0,\,\, x\in\R. $$
Passing to the limit as $t\to +\infty$, we have $U_L(x) \geq U_0(-x+x_0+K\varepsilon)$ for $x\in\R$.  Now we can define 
\begin{equation}\label{define-xi*}
\xi_*:=\inf\{\xi\in\R:\, U_L(x) \geq U_0(-x+\xi)  \,\hbox{ for } \,x\in\R  \}. 
\end{equation}
Clearly, $\xi_*$ is a real number and $U_L(x) \geq U_0(-x+\xi_*)$ for $x\in\R$. 

Next, we show that 
\begin{equation}\label{UL-U0}
U_L(x)> U_0(-x+\xi_*)\,\hbox{ for all }\, x\in\R. 
\end{equation}
Notice that $U_0 (-x + \xi_*)$ is a subsolution of \eqref{eq-sigma}, but it is not a solution, since it intersects the support of $\chi$ where $f_{\sigma}(x,u) > f_0 (u)$. It then follows from the strong maximum principle that the solution $u(t,x)$ of \eqref{eq-sigma} starting from $u(0,x):=U_0 (-x + \xi_*) $ is increasing in $t\in\R$. This in particular implies that 
$$u(t,x) > U_0 (-x + \xi_*)\,\,\hbox{ for all }\,\, t>0,\,x\in\R.$$ 
On the other hand, since $U_L(x)$ is a stationary solution, the comparison principle implies
$$u(t,x) \leq  U_L(x) \,\,\hbox{ for all } \,\,t>0,\,x\in\R.    $$
Combining the above, we immediately obtain \eqref{UL-U0}. \smallskip

{\it Step 3: Completion of the proof.} Take a large positive constant $C$ such that 
$$\hat{\delta}:= \sup_{|x|\geq C-1} | U'_0(-x+\xi_*)  | \leq \frac{1}{2K},$$
where $K$ is the constant obtained in {\it Step 1}. Without loss of generality, we may assume that 
$K\geq 1$, and hence we have $\hat{\delta}\leq 1/2$ here.  
Because of \eqref{UL-U0}, we can find some small constant $\hat{\varepsilon}\in (0,\delta_0/2)$ such that 
$$U_L(x) > U_0(-x+\xi_*-\hat{\varepsilon}) \ \ \hbox{ for }\ \  x\in [-C,  C].$$
On the other hand, for all $x\in (-\infty,-C)\cup (C,\infty)$, by the definition of $\hat{\delta}$,  we have 
\begin{equation*}
\left.\begin{array}{ll}
\medskip U_L(x) - U_0(-x+\xi_*-\hat{\varepsilon}) \!\!\!\!& =\, U_L(x)-U_0(-x+\xi_*)+U_0(-x+\xi_*)  -U_0(-x+\xi_*-\hat{\varepsilon}) \\
 & \geq   -\max_{|x|\geq C}|U_0(-x+\xi_*)  -U_0(-x+\xi_*-\hat{\varepsilon})| \geq -\hat{\varepsilon}\hat{\delta}. 
\end{array}\right. 
\end{equation*}
 Combining the above, we obtain 
$$U_L(x) \geq  U_0(-x+\xi_*-\hat{\varepsilon})-\hat{\varepsilon}\hat{\delta}  \ \ \hbox{ for all }\ \  x\in \R.$$
Notice that $\hat{\varepsilon} \hat{\delta}\in (0,\delta_0/2)$. Then by {\it Step 1},  
$$U_L(x) \geq  U_0(-x+\xi_*-\hat{\varepsilon}+K\hat{\varepsilon}\hat{\delta})-\hat{\varepsilon}\hat{\delta} e^{-\mu t}  \  \ \hbox{ for }\  \  t\geq 0,\, \, x\in \R.  $$
Passing to the limit as $t\to +\infty$, we obtain 
$$U_L(x) \geq  U_0\left(-x+\xi_*-\frac{1}{2}\hat{\varepsilon}\right) \ \ \hbox{ for }  \  \ x\in\R,  $$
by the definition of $\hat{\delta}$ and the monotonicity of $U_0$. This contradicts the definition of $\xi_*$ in~\eqref{define-xi*}, and hence, $U_L$ cannot be stationary. Consequently, $c^*_L>0$.  The proof of Lemma~\ref{sign-rightleft} is complete. 
\end{proof}

\subsection{Arbitrary asymmetrical speeds}

We are now in a position to prove that any pair of two speeds $c_L \geq 0$ and $c_R \geq 0$ is admissible. As we mentioned above, this result is optimal because, by the sign property of pulsating wave speeds (see  \eqref{sign-property}), both speeds cannot have opposite signs. Moreover, the case of $c_L \leq 0$ and $c_R \leq 0$ can be retrieved by a simple change of variables (replacing $u$ by $1-u$).

Let $\chi\in C^1(\R^2)$ be the function given at the beginning of the previous subsection.  We will find that, by considering a reaction term of the type
$$f_0 (u) + \sigma_1 \chi (x,u) + \sigma_2 \chi ( -x,u),$$
varying the parameters $\sigma_1$ and $\sigma_2$ and up to some rescaling, one can achieve any pair of nonnegative speeds. This will in particular prove Theorem \ref{arb-speed-1D}.

We start by checking that the following equation
\begin{equation}\label{eq-sigma1-2}
\partial_t u = \partial_{xx}  u +  f_0 (u) + \sigma_1 \chi (x,u) + \sigma_2 \chi (-x,u) \,\, \hbox{ for }\, \, t\in\R,\,x\in\R,
\end{equation} 
is of the bistable type, provided that $\sigma_1$ and $\sigma_2$ are small enough.
\begin{lem}\label{speed-sigma-0-new}
There exists $\sigma_*>0$ such that for every $0<\sigma_1 ,\sigma_2 \leq \sigma_*$, the equation~\eqref{eq-sigma1-2} is spatially periodic and bistable in the sense of Definition~\ref{bistable} with period~1; in particular, each intermediate $1$-periodic steady state $\bar{u}$ of \eqref{eq-sigma1-2} with $0<\bar{u}<1$ is linearly unstable.
\end{lem}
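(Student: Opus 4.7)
The plan is to adapt the argument of Lemma \ref{inter-unstable} almost verbatim, since the structure of equation \eqref{eq-sigma1-2} is essentially the same as that of \eqref{eq-sigma}, the only difference being that we now have two small perturbation parameters rather than one. The two perturbation functions $\chi(x,u)$ and $\chi(-x,u)$ both vanish in a neighborhood of $u=0$ and $u=1$, and each is $1$-periodic in $x$, so spatial periodicity with period $1$ is immediate and the linear stability of the steady states $u \equiv 0$ and $u \equiv 1$ follows at once from the fact that the linearized operator near these constants reduces to $\partial_{xx} + f_0'(0)$ or $\partial_{xx} + f_0'(1)$, whose principal periodic eigenvalue is negative.

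Next I would invoke the Dancer--Hess connecting orbit theorem to guarantee the existence of at least one $1$-periodic steady state strictly between $0$ and $1$, so the only nontrivial point in checking Definition~\ref{bistable} is the linear instability of every such intermediate state $\bar{u}$.

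For the instability, I would argue by contradiction exactly as in Lemma \ref{inter-unstable}: assume there exist sequences $\sigma_{1,n}, \sigma_{2,n} \to 0^+$, corresponding $1$-periodic steady states $0 < \bar{u}_n < 1$ of \eqref{eq-sigma1-2} with $\sigma_i$ replaced by $\sigma_{i,n}$, and principal eigenfunctions $\psi_n > 0$ with eigenvalues $\lambda_1(\sigma_{1,n}, \sigma_{2,n}, \bar{u}_n) \leq 0$. The eigenvalues are uniformly bounded because $\partial_u f_{\sigma_1,\sigma_2}(x,u)$ is uniformly bounded on $\R \times [0,1]$ for $(\sigma_1,\sigma_2)$ in a compact set. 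Normalizing $\max_{\R}\psi_n = 1$ and using standard elliptic estimates together with the $C^1$ convergence $f_0(u) + \sigma_{1,n}\chi(x,u) + \sigma_{2,n}\chi(-x,u) \to f_0(u)$, we extract subsequences converging to a $C^2$ periodic limit pair $(u_\infty, \psi_\infty)$ with $\psi_\infty > 0$ (by the strong maximum principle), satisfying
\begin{equation*}
u_\infty'' + f_0(u_\infty) = 0, \qquad \psi_\infty'' + f_0'(u_\infty)\psi_\infty = \tilde\lambda_1 \psi_\infty, \qquad \tilde\lambda_1 \leq 0,
\end{equation*}
with $u_\infty$ being $1$-periodic and $0 \leq u_\infty \leq 1$. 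The key point is that $u_\infty$ cannot be identically $0$ or identically $1$: if it were, then for $n$ large $\bar{u}_n$ would lie in the region where $f_{\sigma_{1,n},\sigma_{2,n}} \equiv f_0$ (by our choice of $\chi$ vanishing near $\{0,1\}$), and integrating $\bar{u}_n'' + f_0(\bar{u}_n) = 0$ over a period would yield $\int_0^1 f_0(\bar{u}_n) = 0$, contradicting the sign of $f_0$ near $0$ and near $1$.

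Thus $u_\infty$ is either the constant $1/2$ or a nonconstant $1$-periodic solution of the homogeneous balanced bistable ODE. In the first case $\tilde\lambda_1 = f_0'(1/2) > 0$, a contradiction. In the second case $u_\infty'$ is a sign-changing periodic solution of the linearized equation, so $0$ is a non-principal periodic eigenvalue of $\partial_{xx} + f_0'(u_\infty)$, and by the Krein--Rutman theory $\tilde\lambda_1 > 0$, again a contradiction. This forces the existence of the desired $\sigma_* > 0$. I do not foresee any genuine obstacle here, since the proof is a direct transcription of Lemma~\ref{inter-unstable}; the only mild care needed is to verify that the bi-parameter convergence $(\sigma_{1,n}, \sigma_{2,n}) \to (0,0)$ still yields the homogeneous limiting equation, which is immediate from the $C^1$ convergence of the reaction term.
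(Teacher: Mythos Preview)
Your proposal is correct and follows exactly the approach indicated by the paper, which explicitly states that Lemma~\ref{speed-sigma-0-new} ``follows from the proof of Lemma~\ref{inter-unstable} by some minor modifications'' and omits the details. Your write-up supplies precisely those modifications (handling the two-parameter convergence $(\sigma_{1,n},\sigma_{2,n})\to(0,0)$ in place of a single $\sigma_n\to 0$), and the rest is verbatim the argument of Lemma~\ref{inter-unstable}.
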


This lemma follows from the proof of Lemma \ref{inter-unstable} by some minor modifications; therefore we omit the proof. Up to reducing $\sigma_*$ and without loss of generality, we can assume that it is the same positive constant as in Proposition~\ref{prop-zero-pos}.\\

In the discussion below, we consider the family of equations
\begin{equation}\label{eq-sigma1-2-bis}
\partial_t u = \partial_{xx} u +  f(x,u;\tau)\,\, \hbox{ for }\, \, t\in\R,\,x\in\R,
\end{equation} 
where 
$$f(x,u;\tau) := f_0 (u) + \sigma_* \chi (x,u) + \tau \sigma_* \chi (-x,u)  \,\, \hbox{ for }  \,\,   x\in\R,\, u\in \R,$$
which is increasing with respect to the parameter $\tau \in [0,1]$.

By Lemma~\ref{speed-sigma-0-new}, equation~\eqref{eq-sigma1-2-bis} is of the spatially periodic bistable type with period~$1$. Therefore, it follows from Theorem \ref{th:tw} that for every $\tau\in [0,1]$, equation~\eqref{eq-sigma1-2-bis}  admits a leftward pulsating wave $U_{\tau,L}$ with speed $c_{\tau,L}$ and a rightward pulsating wave $U_{\tau,R}$ with speed~$c_{\tau,R}$, where both $c_{\tau,L}$ and $c_{\tau,R}$ are nonnegative since $\int_{0}^1\int_{0}^1 f(x,u;\tau)dxdu>0$. 
Moreover, we have the following lemma. 

\begin{lem}\label{sign-speed-tau-new}
The following statements hold true:
\begin{itemize}
\item [$(i)$] $c_{\tau,L}>0$ for $\tau\in[0,1]$; 
\item [$(ii)$] $c_{\tau,R}>0$ for $\tau\in (0,1]$ and $c_{0,R}=0$;
\item [$(iii)$] $c_{1,L}=c_{1,R}$. 
\end{itemize}
Consequently, $U_{\tau,L}$ is unique up to time shifts for $\tau\in [0,1]$ and $U_{\tau,R}$ is unique up to time shifts for $\tau\in (0,1]$. 
\end{lem}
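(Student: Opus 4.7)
The three statements will be proved separately by three different techniques: (i) follows from the monotonicity of the wave speed with respect to~$f$, (iii) from a reflection symmetry at $\tau=1$, and (ii) from a strict-subsolution argument that mirrors Lemma~\ref{sign-rightleft} with left and right interchanged. The uniqueness consequences will then be immediate from Theorem~\ref{th:tw}, since $f(\cdot,u;\tau)=f_0(u)$ is strictly decreasing in $u$ on $[-\delta_0,\delta_0]\cup[1-\delta_0,1+\delta_0]$ and, on the stated ranges, the corresponding wave speeds will be nonzero.

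\emph{Statements (i) and (iii).} At $\tau=0$ the equation~\eqref{eq-sigma1-2-bis} is exactly equation~\eqref{eq-sigma} with $\sigma=\sigma_*$, and Proposition~\ref{prop-zero-pos} (applied at any $\sigma<\sigma_*$ and combined with Lemma~\ref{speed-tau-monotone-hd} to handle the boundary value $\sigma=\sigma_*$) yields $c_{0,L}>0$. Since $\tau\mapsto f(x,u;\tau)$ is pointwise nondecreasing on $[0,1]$, Lemma~\ref{speed-tau-monotone-hd} applied in direction $e=-1$ gives $c_{\tau,L}\geq c_{0,L}>0$ for every $\tau\in[0,1]$, which is~(i). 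For~(iii), the reaction $f(x,u;1)=f_0(u)+\sigma_*\bigl(\chi(x,u)+\chi(-x,u)\bigr)$ is even in~$x$, so the equation is invariant under $x\mapsto-x$; hence if $U_{1,L}(t,x)$ is a leftward pulsating wave with speed $c_{1,L}$, then $\tilde U(t,x):=U_{1,L}(t,-x)$ solves the same equation and takes the form of a rightward pulsating wave with speed $c_{1,L}$. The uniqueness of the rightward wave speed in Theorem~\ref{th:tw} then forces $c_{1,R}=c_{1,L}$.

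\emph{Statement (ii).} At $\tau=0$, the function $U_0(x)$ is still a rightward stationary wave of \eqref{eq-sigma1-2-bis} (because $\chi(x,U_0(x))\equiv 0$ on the lower boundary of $E$), so the uniqueness of the rightward wave speed gives $c_{0,R}=0$. For $\tau\in(0,1]$, the key observation is that $U_0$ becomes a \emph{strict} subsolution of the stationary version of~\eqref{eq-sigma1-2-bis}: plugging $U_0$ into the elliptic operator leaves $\tau\sigma_*\chi(-x,U_0(x))$, which is nonnegative and strictly positive on an open set (namely those $x$ with $U_0(x)\in[2\delta_0,1-2\delta_0]$ for which $(-x\bmod 1,U_0(x))$ lies in the interior of $E$ away from the upper boundary $\{u=U_0(\cdot-1)\}$). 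I then argue by contradiction: assume $c_{\tau,R}=0$ and let $V$ be a stationary rightward wave of~\eqref{eq-sigma1-2-bis}. Since $f(x,u;\tau)=f_0(u)$ near $u=0$ and $u=1$, Lemma~\ref{exp-decay-hd-2} guarantees that $V$ and $U_0$ decay exponentially at the same rates $\sqrt{-f_0'(0)}$ and $\sqrt{-f_0'(1)}$ at $+\infty$ and $-\infty$ respectively, so
$$\xi_*:=\sup\{\xi\in\R\,:\,V(x)\geq U_0(x-\xi)\text{ for all }x\in\R\}$$
is a finite real number. I then mirror the three-step argument of Lemma~\ref{sign-rightleft}, with the shift direction reversed: in \emph{Step~1}, the Fife--McLeod subsolution $v(t,x)=U_0(x-\xi_0+\eta(t))-q(t)$ with $\eta(0)=0$, $q(0)=\varepsilon$, $q'=-\mu q$, and $\eta'=(M+\mu)q/\rho$ (where $M=\max_{[0,1]}|f_0'|$ and $\rho=-\max_{\delta_0/2\leq U_0\leq 1-\delta_0/2}U_0'>0$) yields $u(t,x;u_0)\geq U_0\bigl(x-(\xi_0-K\varepsilon)\bigr)-\varepsilon e^{-\mu t}$ whenever $u_0\geq U_0(\cdot-\xi_0)-\varepsilon$, with $K=(M+\mu)/(\mu\rho)$; in \emph{Step~2}, since $V$ is a stationary solution and $U_0(\cdot-\xi_*)$ is a strict subsolution with $V\geq U_0(\cdot-\xi_*)$, the evolution starting from $U_0(\cdot-\xi_*)$ is strictly increasing in $t$ and trapped below $V$, so the strong maximum principle yields $V>U_0(\cdot-\xi_*)$ strictly on $\R$; in \emph{Step~3}, for a large cut-off $C$ with $\hat\delta:=\sup_{|y|\geq C-1}|U_0'(y-\xi_*)|\leq 1/(2K)$ and a small $\hat\varepsilon\in(0,\delta_0/2)$, one combines the strict inequality of Step~2 on $[-C,C]$ with the derivative bound outside $[-C,C]$ to get $V(x)\geq U_0\bigl(x-(\xi_*+\hat\varepsilon)\bigr)-\hat\varepsilon\hat\delta$ for all~$x$, and then plugs this into Step~1 with $\xi_0=\xi_*+\hat\varepsilon$ and $\varepsilon=\hat\varepsilon\hat\delta$ and sends $t\to+\infty$ to conclude $V(x)\geq U_0\bigl(x-(\xi_*+\hat\varepsilon/2)\bigr)$, contradicting the definition of~$\xi_*$.

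The hard part is Step~3 and its interplay with the exponential behavior of $V$ and $U_0$ at $\pm\infty$: one must choose $\hat\varepsilon$ small enough that the vertical tolerance $\hat\varepsilon\hat\delta$ does not swallow the strict gap $V-U_0(\cdot-\xi_*)$ on the compact set $[-C,C]$, while simultaneously using the smallness of $|U_0'|$ on the tails to absorb the horizontal shift by $\hat\varepsilon$; this is handled exactly as in Step~3 of Lemma~\ref{sign-rightleft}, and I do not anticipate any additional difficulties. Once (i) and (ii) are established, the uniqueness of $U_{\tau,L}$ for $\tau\in[0,1]$ and of $U_{\tau,R}$ for $\tau\in(0,1]$ up to time shifts is immediate from Theorem~\ref{th:tw}.
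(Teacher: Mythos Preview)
Your proof is correct. For~(ii) and~(iii) you take exactly the paper's route (a strict-subsolution argument mirroring the three-step Fife--McLeod sliding of Lemma~\ref{sign-rightleft}, and reflection symmetry, respectively). For~(i) you diverge: the paper observes that every shift of $U_0(-\cdot)$ is a strict subsolution of~\eqref{eq-sigma1-2-bis} for all $\tau\in[0,1]$ and re-runs the whole three-step argument, whereas you invoke Proposition~\ref{prop-zero-pos} to get $c_{0,L}>0$ and then conclude $c_{\tau,L}\geq c_{0,L}>0$ directly from the monotonicity Lemma~\ref{speed-tau-monotone-hd}. Your route is more economical given what is already available from Section~\ref{sec:prelim}; the paper's has the mild advantage of being self-contained within the strict-subsolution framework and not needing the monotonicity lemma at this stage. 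One small point to tighten in~(ii): your Step~2 requires that the \emph{shifted} profile $U_0(\cdot-\xi_*)$, not just $U_0$ itself, be a strict subsolution; this is indeed true (plugging it in leaves $\sigma_*\chi(x,U_0(x-\xi_*))+\tau\sigma_*\chi(-x,U_0(x-\xi_*))$, and the second term can vanish only at the discrete points $x=(\xi_*-m)/2$, $m\in\Z$, within the range $U_0(x-\xi_*)\in[2\delta_0,1-2\delta_0]$), and the paper accordingly states it as ``any shift of $U_0(\cdot)$ is a strict subsolution of~\eqref{eq-sigma1-2-bis} with $\tau\in(0,1]$''---you should phrase it that way too.
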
 

\begin{proof}
Notice that any shift of $U_0(\cdot)$ is a strict subsolution of \eqref{eq-sigma1-2-bis} with $\tau\in (0,1]$, and any shift of $U_0(-\cdot)$ is a strict subsolution of \eqref{eq-sigma1-2-bis} with $\tau\in [0,1]$. Then,  the positivity of $c_{\tau,L}$ for $\tau\in[0,1]$ and of $c_{\tau,R}$ for $\tau\in (0,1]$ follows from the proof of Lemma \ref{sign-rightleft} by some obvious modifications.

The fact that $c_{0,R}=0$ follows directly from Lemma \ref{sign-rightleft} (notice that $f(x,u;0)= f_{\sigma_*}$), and statement $(iii)$ is an easy consequence of the fact that $f(x,u;1)$ is symmetric with respect to $x$.  The proof of Lemma \ref{sign-speed-tau-new} is thus complete. 
\end{proof}

The next lemma is concerned with the monotonicity and continuity of the wave speeds with respect to the parameter~$\tau$.  

\begin{lem}\label{speed-tau-monotone-continue} 
The functions $\tau\mapsto c_{\tau,L}$ and $\tau\mapsto c_{\tau,R}$ are nondecreasing and continuous in $\tau\in [0,1]$.  
\end{lem}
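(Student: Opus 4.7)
The plan is to deduce both monotonicity and continuity from the general results established in Section~\ref{sec:prelim}, applied to the one-parameter family $f(x,u;\tau)$. The key observation is that $f(x,u;\tau)$ depends on $\tau$ through the nonnegative term $\tau \sigma_* \chi(-x,u)$, so the family is pointwise nondecreasing in $\tau$ and depends linearly (in particular smoothly) on $\tau$ in the $C^1$ norm, since $\chi$ is $C^1$ and has compact support in $u$ and is periodic in $x$.

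For the monotonicity statement, I would let $\tau_1 \leq \tau_2$ in $[0,1]$. Since $\chi \geq 0$, we have $f(\cdot,\cdot;\tau_1) \leq f(\cdot,\cdot;\tau_2)$ on $\R^2$. By Lemma~\ref{speed-sigma-0-new} both reaction terms are of the bistable type with the same period, and by the construction of $\chi$ they both satisfy the homogeneity condition~\eqref{homo-neighbor} on neighborhoods of $0$ and $1$. Thus Lemma~\ref{speed-tau-monotone-hd} applies and yields $c_{\tau_1,L} \leq c_{\tau_2,L}$ and $c_{\tau_1,R} \leq c_{\tau_2,R}$.

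For continuity, I would fix $\tau_0 \in [0,1]$ and an arbitrary sequence $\tau_n \to \tau_0$. Since $f(\cdot,\cdot;\tau_n) \to f(\cdot,\cdot;\tau_0)$ in $C^1(\R^2)$, I can invoke Lemma~\ref{speed-tau-continue-hd} provided its hypotheses are met. For the leftward direction, Lemma~\ref{sign-speed-tau-new}(i) combined with the already-proved monotonicity gives the uniform positive lower bound $c_{\tau_n,L} \geq c_{0,L} > 0$ for every $n$, so statement~(ii) of Lemma~\ref{speed-tau-continue-hd} applies and gives $c_{\tau_n,L} \to c_{\tau_0,L}$. For the rightward direction, I would split into two cases. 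If $\tau_0 > 0$, then $\tau_n \geq \tau_0/2$ for $n$ large, and by monotonicity $c_{\tau_n,R} \geq c_{\tau_0/2,R} > 0$ (the positivity coming from Lemma~\ref{sign-speed-tau-new}(ii)), so again Lemma~\ref{speed-tau-continue-hd}(ii) applies. If $\tau_0 = 0$, then $c_{0,R} = 0$ by Lemma~\ref{sign-speed-tau-new}(ii), so Lemma~\ref{speed-tau-continue-hd}(i) directly gives $c_{\tau_n,R} \to 0 = c_{0,R}$.

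Since both wave-speed functions are monotone, it would actually suffice to prove continuity along, say, increasing sequences, but the argument above is uniform and covers all cases at once. No real obstacle is expected here: the structural lemmas of Section~\ref{sec:prelim} do all the heavy lifting, and the only delicate point is that continuity of $\tau \mapsto c_{\tau,R}$ at $\tau_0 = 0$ requires the degenerate case (i) of Lemma~\ref{speed-tau-continue-hd} rather than case (ii), because the uniform positive lower bound on the speeds fails precisely at $\tau_0 = 0$. Handling this case separately, as described above, is the one place where the argument is not completely automatic.
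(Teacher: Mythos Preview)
Your proposal is correct and follows essentially the same route as the paper: monotonicity via Lemma~\ref{speed-tau-monotone-hd}, and continuity via Lemma~\ref{speed-tau-continue-hd}, with the case split for $c_{\tau,R}$ handled by Lemma~\ref{sign-speed-tau-new}. The only cosmetic difference is that the paper splits on whether $c_{\tau_\infty,R}=0$ while you split on whether $\tau_0=0$, but by Lemma~\ref{sign-speed-tau-new}(ii) these are equivalent.
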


\begin{proof}
Due to our construction of $f(x,u;\tau)$ and Lemma \ref{speed-sigma-0-new}, all the conclusions in Section~\ref{sec:prelim} hold for the equation \eqref{eq-sigma1-2-bis} with $\tau\in [0,1]$.  In particular, since $f(x,u;\tau)$ is increasing in $\tau\in [0,1]$, it follows from Lemma \ref{speed-tau-monotone-hd}  that the speeds $c_{\tau,L}$ and $c_{\tau,R}$ are nondecreasing in $\tau$. 

It remains to show that $c_{\tau,L}$ and $c_{\tau,R}$ are continuous with respect to $\tau$. 
We only prove the continuity of $c_{\tau,R}$, as the proof for $c_{\tau,L}$ is similar (actually, the proof of the continuity of $c_{\tau,L}$ is even simpler, since by Lemma~\ref{sign-speed-tau-new}~$(i)$, $c_{\tau,L}$ does not change sign with respect to $\tau\in[0,1]$). Let $(\tau_n)_{n\in\N} \subset [0,1]$ be an arbitrary sequence with $\tau_n \to \tau_\infty$ as $n\to + \infty$ for some $\tau_\infty \in [0,1]$. It suffices to show that 
\begin{equation}\label{cLn-cLinfty}
c_{\tau_{n},L} \to c_{\tau_{\infty},L} \,\,\hbox{ as } \,\, n\to +\infty. 
\end{equation}
Notice that $f(\cdot,\cdot;\tau_n)\to f(\cdot,\cdot;\tau_{\infty})$ as $n\to +\infty$ in $C^1(\R^{2})$. 
If $c_{\tau_{\infty},L}=0$, then \eqref{cLn-cLinfty} follows directly from statement $(i)$ of Lemma \ref{speed-tau-continue-hd}. On the other hand, if $c_{\tau_{\infty},L}\neq 0$, then we have $c_{\tau_{\infty},L}>0$ and by Lemma~\ref{sign-speed-tau-new}~$(ii)$, $\tau_{\infty}$ must be positive. This in particular implies that  $\tau_n>\tau_\infty/2$ for all large $n\in\N$. By using Lemmas \ref{speed-tau-monotone-hd} and~\ref{sign-speed-tau-new}~$(ii)$, we see that for all large $n\in\N$, $c_{\tau_{n},L}$ is bounded away from $0$ by a positive constant, and hence, Lemma \ref{speed-tau-continue-hd} $(ii)$ immediately implies \eqref{cLn-cLinfty}.  This ends the proof of Lemma~\ref{speed-tau-monotone-continue}.  
\end{proof}

We are now ready to prove Theorem \ref{arb-speed-1D} by a rescaling argument. 

\begin{proof}[Proof of Theorem \ref{arb-speed-1D}]
Let $c_L$ and $c_R$ be any two nonnegative numbers. Without loss of generality, we can restrict ourselves to the case where $c_L \geq c_R$ (the case where $c_L \leq c_R$ can be treated identically). We may also assume that $c_L>0$, as in the case where $c_L = c_R=0$, one can simply choose $f(x,u)= f_0 (u)$ the balanced Allen-Cahn nonlinearity, and the desired result is automatically proved.

Let us choose 
$$\gamma:= \frac{c_R}{c_L}.$$
Clearly, $0\leq \gamma \leq 1$.  By Lemma \ref{speed-tau-monotone-continue}, the set $\{(c_{\tau,L},c_{\tau,R}):\tau\in [0,1]\}$ of the wave speeds of \eqref{eq-sigma1-2-bis} is a continuous curve connecting $(c_{0,L}, 0)$ and $(c_{1,L},c_{1,R})$ in the quadrant region $\mathbb{R}^+ \times \mathbb{R}^+$. Since $c_{0,L} >0$ by Lemma~\ref{sign-speed-tau-new}~$(i)$, and since both $c_{\tau,L}$ and $c_{\tau,R}$ are nondecreasing in $\tau\in[0,1]$ by Lemma~\ref{speed-tau-monotone-continue}, this curve is away from the origin. Combining this with the fact that $c_{1,L}=c_{1,R}$ (see Lemma~\ref{sign-speed-tau-new}~$(iii)$), one infers that, for the above $\gamma\in [0,1]$, there exists some $\tau\in [0,1]$ such that equation \eqref{eq-sigma1-2-bis} has a rightward pulsating wave $U_{\tau,R}(t,x)$ with speed $c_{\tau,R}$ and a leftward pulsating wave $U_{\tau,L}(t,x)$ with speed $c_{\tau,L}$, and that 
$$c_{\tau,R} = \gamma c_{\tau,L} \geq 0.$$
Recalling that $c_{\tau,L}>0$, one finds some $\nu > 0$ such that $c_L = \nu c_{\tau,L}$. 
It is then easily checked from the above that  $c_R = \nu c_{\tau,R}$. 

Let 
$$f(x,u):=\nu^2 f(\nu x,u;\tau). $$  
Then $f(x,u)$ is $1/\nu$-periodic in $x$, and $u\equiv 1$, $u\equiv 0$ are linearly stable steady states of~\eqref{eq:main}. Moreover, by Lemma~\ref{speed-sigma-0-new}, it is clear that any $1/\nu$-periodic steady state $\bar{u}$ of \eqref{eq:main} with $0<\bar{u}<1$ is linearly unstable. This means that, with this choice of the reaction term, equation~\eqref{eq:main} is spatially periodic and bistable in the sense of Definition~\ref{bistable}. It is also easily seen that $U_{R}(t,x):=U_{\tau,R}(\nu^2t,\nu x)$ is a rightward pulsating wave of \eqref{eq:main} with speed $c_R$, and $U_{L}(t,x):=U_{\tau,L}(\nu^2t,\nu x)$ is a leftward pulsating wave with speed $c_L$. The proof of Theorem \ref{arb-speed-1D} is thus complete.
\end{proof}

%%%%%%%%%%%%%%%%%%%%%%%%%%%%%%%%%%%%%%%%%%%%%%%%%%%%%%%%
%%%%%%%%%%%%%%%%%%%%%%%%%%%%%%%%%%%%%%%%%%%%%%%%%%%%%%%%

\section{Admissible speeds in higher dimensions}\label{sec:dd}

We now turn to the higher dimension $d>1$ and prove Theorem \ref{theo-hd}, that is, the speeds can be chosen arbitrarily (as long as they do not change signs) in some arbitrarily large number of directions. 

Let us first define several subsets of the unit sphere $\mathbb{S}^{d-1}$. 
Denote by $(e_i)_{1\leq i\leq d}$ the standard basis of $\R^d$. We define
$$\mathcal{S} := \mathbb{S}^{d-1} \cap \Q^d,$$
which is the set of unit vectors with rational coordinates. Equivalently, $e \in \mathbb{S}^{d-1}$ belongs to~$\mathcal{S}$ if, for each $1\leq i\leq d$, 
$$ e \cdot e_i \in \Q . $$
Moreover, the following lemma holds true. 
\begin{lem}\label{dense}
The set $\mathcal{S}$ is dense in $\mathbb{S}^{d-1}$.
\end{lem}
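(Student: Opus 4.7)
The plan is to use stereographic projection from a rational pole to transfer the known density of $\Q^{d-1}$ in $\R^{d-1}$ over to the sphere. More precisely, I would fix the north pole $e_d = (0,\cdots,0,1)$, which already belongs to $\mathcal{S}$, and consider the map $\Pi : \mathbb{S}^{d-1} \setminus \{e_d\} \to \R^{d-1}$ defined by
$$\Pi(y_1,\cdots,y_d) = \left( \frac{y_1}{1-y_d}, \cdots, \frac{y_{d-1}}{1-y_d} \right),$$
whose inverse is given by the explicit rational formula
$$\Pi^{-1}(x) = \left( \frac{2x_1}{1+|x|^2}, \cdots, \frac{2x_{d-1}}{1+|x|^2}, \frac{|x|^2 - 1}{|x|^2 + 1} \right).$$

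The first step is to observe that $\Pi$ is a homeomorphism from $\mathbb{S}^{d-1} \setminus \{e_d\}$ onto $\R^{d-1}$, which is a standard fact. The crucial second step is then the purely algebraic remark that both $\Pi$ and $\Pi^{-1}$ send rational points to rational points: if $y \in \mathbb{S}^{d-1} \cap \Q^d$ and $y \neq e_d$, then $\Pi(y) \in \Q^{d-1}$, and conversely if $x \in \Q^{d-1}$, then $\Pi^{-1}(x) \in \mathbb{S}^{d-1} \cap \Q^d = \mathcal{S}$.

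The final step is to combine these facts: since $\Q^{d-1}$ is dense in $\R^{d-1}$ and $\Pi^{-1}$ is continuous, the image $\Pi^{-1}(\Q^{d-1}) \subset \mathcal{S}$ is dense in $\mathbb{S}^{d-1} \setminus \{e_d\}$. As $e_d$ itself lies in $\mathcal{S}$, we conclude that $\mathcal{S}$ is dense in all of $\mathbb{S}^{d-1}$. There is no real obstacle to this argument; the only point worth checking carefully is that $\Pi^{-1}(x)$ does indeed have rational entries when $x$ does, which is immediate from the formula, and that it has unit norm, which follows from the identity $(2x_i)^2 + (|x|^2-1)^2 = (|x|^2+1)^2 \cdot \text{(rational stuff)}$ but more transparently from the fact that $\Pi^{-1}$ by construction takes values on $\mathbb{S}^{d-1}$.
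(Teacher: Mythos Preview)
Your proof is correct and follows essentially the same approach as the paper: both use the stereographic projection (you from $e_d$, the paper from $(1,0,\cdots,0)$), observe that it restricts to a bijection between $\mathcal{S}\setminus\{\text{pole}\}$ and $\Q^{d-1}$, and then transfer the density of $\Q^{d-1}$ back to the sphere. Your write-up is in fact slightly more explicit, since you spell out the inverse formula and verify that it preserves rationality, a point the paper leaves implicit.
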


\begin{proof}
We first check that $\mathcal{S}$ is a dense subset of $\mathbb{S}^{d-1}$, which is a classical consequence of the stereographic projection. The stereographic projection is a one-to-one continuous mapping from $\mathbb{S}^{d-1} \setminus P$, where $P$ is some point on the sphere, say for instance $(1,0,\cdots,0)$, to a hyperplane. It is defined by
$$\Pi  (x_1,\cdots,x_n) = \left( \frac{x_2}{1-x_1} , \cdots , \frac{x_d}{1-x_1} \right).$$
In particular, it is also a one-to-one mapping from $\mathcal{S} \setminus P$ to $\Q^{d-1}$. It immediately follows that $\mathcal{S} \setminus P$ is dense in $\mathbb{S}^{d-1} \setminus P$. Equivalently, $\mathcal{S}$ is dense in $\mathbb{S}^{d-1}$. 
\end{proof}
Next, we define a more general set which may include some vectors in the unit sphere with irrational coordinates.   
For any vector $L = (L_1, \cdots , L_d)$ with $L_i>0$ for $1\leq i\leq d$, we define 
\begin{equation}\label{assume-hd-bisbis} 
\mathcal{S}_L := \{ \zeta \in \mathbb{S}^{d-1} \, : \ \exists\, M >0  , \  \forall \,1 \leq i \leq d, \ \   L_i \zeta \cdot e_i \in M \Z \}.
\end{equation}
Clearly, when $L= (1,\cdots,1)$, then $\mathcal{S} \subset \mathcal{S}_L$ and in particular $\mathcal{S}_L$ is dense in $\mathbb{S}^{d-1}$.

The purpose of this section is to prove that any finite set of speeds in directions belonging to $\mathcal{S}_L$ is admissible. More precisely:

\begin{theo}\label{main-prop-hd}
 Let $L= (L_1, \cdots , L_d)$ be any $d$-uplet of positive real numbers, $N\geq 2$ be any integer and  
$(\zeta_1,\zeta_2,\cdots,\zeta_N)$ be any $N$-uplet of different directions in $\mathcal{S}_L$. 
Then for any $N$-uplet $(c_1,\cdots,c_N)\in \R_+^N \cup \R_-^N$, there exists a spatially periodic bistable equation in the sense of Definition~\ref{bistable} such that for each $1\leq j\leq N$, 
$$c^*(\zeta_j)=c_j,$$
where $c^*(\zeta_j)$ is the pulsating wave speed in the direction $\zeta_j$. 
\end{theo}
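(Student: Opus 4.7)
The plan is to follow the two-stage strategy of the one-dimensional proof of Theorem~\ref{arb-speed-1D}: build a multi-parameter family of spatially periodic bistable reactions, use the continuity and monotonicity of the resulting wave-speed map to match the prescribed ratios $(c_1:\cdots:c_N)$, and finish with a parabolic rescaling to match the magnitudes.

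\textbf{Setup.} By the change of variable $u\mapsto 1-u$, we may assume $(c_1,\ldots,c_N)\in\R_+^N$. For each $\zeta_j\in\mathcal{S}_L$, fix $M_j>0$ such that $L_i\,\zeta_j\cdot e_i\in M_j\Z$ for every $1\leq i\leq d$, so that any $M_j$-periodic function of the scalar $x\cdot\zeta_j$ is automatically $L$-periodic on $\R^d$. Let $U_0^{(j)}$ be the balanced Allen--Cahn stationary profile rescaled to a length compatible with $M_j$. Mimicking the construction preceding Proposition~\ref{prop-zero-pos}, build a nonnegative $C^1$ bump $\tilde\chi_j(s,u)$ that is $M_j$-periodic in $s$, vanishes on the graph $\{u=U_0^{(j)}(s)\}$ and on small neighborhoods of $u=0$ and $u=1$, and is positive in a thin strip above that graph. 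Set $\chi_j(x,u):=\tilde\chi_j(x\cdot\zeta_j,u)$, which is $L$-periodic in $x$.

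\textbf{Speed map.} For a small $\sigma_*>0$ and $\tau=(\tau_1,\ldots,\tau_N)\in[0,1]^N$, consider the family of reactions $f_\tau(x,u):=f_0(u)+\sigma_*\sum_j\tau_j\chi_j(x,u)$. An adaptation of Lemma~\ref{inter-unstable}, using that $f_\tau\to f_0$ in $C^1(\R^{d+1})$ as $\sigma_*\to 0$ and that the homogeneous balanced Allen--Cahn equation admits no linearly stable nonconstant periodic steady state, shows that $f_\tau$ is bistable in the sense of Definition~\ref{bistable} for every $\tau$. Combining Lemmas~\ref{speed-tau-monotone-hd} and~\ref{speed-tau-continue-hd} yields that the speed vector $\tau\mapsto\bigl(c^*(\zeta_j;\tau)\bigr)_{j=1}^N$ is continuous in $\tau$, componentwise nondecreasing, and takes values in $\R_+^N$ by the sign property~\eqref{sign-property}. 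A direct transposition of Lemma~\ref{sign-rightleft} provides two complementary assertions: along the coordinate axis $\tau=t\,e_j$, the profile $U_0^{(j)}(x\cdot\zeta_j)$ remains a genuine stationary solution (the bump $\chi_j$ vanishes on its graph), whence $c^*(\zeta_j;te_j)=0$; whereas for $k\neq j$ and $t>0$, the graph of $U_0^{(k)}$ intersects the support of $\chi_j$ transversally in $(x,u)$-space (when $\zeta_k\neq\pm\zeta_j$ this follows from the linear independence of $\zeta_j,\zeta_k$, and in the antipodal case $\zeta_k=-\zeta_j$ from a one-dimensional argument identical to that in the proof of Theorem~\ref{arb-speed-1D}), which turns $U_0^{(k)}$ into a strict subsolution and forces $c^*(\zeta_k;te_j)>0$.

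\textbf{Surjectivity and rescaling.} The main difficulty is showing that this continuous monotone map covers enough of $\R_+^N$ to realize every prescribed ratio. To achieve this, I would enrich the family by pairing each $\chi_j$ with a companion bump $\chi_j^\sharp(x,u)=\tilde\chi_j^\sharp(x\cdot\zeta_j,u)$ supported in the complementary strip where $\chi_j$ vanishes, and augment $f_\tau$ with an additional set of $N$ parameters driving the $\chi_j^\sharp$'s. Adding $\chi_j^\sharp$ strictly unpins the stationary profile in direction $\zeta_j$, while its effect on $c^*(\zeta_k)$ for $k\neq j$ can be made quantitatively small compared with the dominant effect of the $j$-th parameter, using the exponential decay estimates of Lemmas~\ref{exp-decay-hd-1} and~\ref{exp-decay-hd-2} to control the interaction integrals. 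The hard part will be making this ``decoupling'' argument quantitative, since the companion bumps cannot have supports simultaneously disjoint from all the graphs $\{u=U_0^{(k)}(s)\}$; the separation of roles must therefore come from smallness rather than pointwise orthogonality. Once this is done, a Miranda-type intermediate value argument on the enlarged parameter cube (or equivalently an induction on $N$, peeling off one direction at a time) delivers, for any $(\rho_1,\ldots,\rho_N)\in[0,1]^N$, a parameter configuration realizing $c^*(\zeta_j)=\rho_j\,c_\diamond$ for some common reference value $c_\diamond>0$. A final parabolic rescaling $f(x,u):=\nu^2 f_\tau(\nu x,u)$ with $\nu>0$ chosen so that $\nu\,c_\diamond=\max_j c_j$, exactly as at the end of the proof of Theorem~\ref{arb-speed-1D}, produces a bistable reaction of period $L/\nu$ with $c^*(\zeta_j)=c_j$ for every $j$; the compatibility $\zeta_j\in\mathcal{S}_{L/\nu}$ is immediate from $\zeta_j\in\mathcal{S}_L$.
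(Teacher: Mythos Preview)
Your overall strategy---build a multi-parameter family, invoke continuity and monotonicity, apply a topological intermediate-value argument, then rescale---matches the paper's. The rescaling step is correct. However, your additive perturbation
\[
f_\tau(x,u)=f_0(u)+\sigma_*\sum_{j=1}^N \tau_j\,\chi_j(x,u)
\]
yields the \emph{wrong} parameter--speed correspondence, and this breaks the surjectivity step for $N\geq 3$.

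With your construction, $U_0^{(j)}(x\cdot\zeta_j)$ is a stationary solution if and only if every perturbation term vanishes on its graph; since $\chi_j$ vanishes there but $\chi_k$ ($k\neq j$) does not, you obtain
\[
c^*(\zeta_j;\tau)=0 \quad\Longleftrightarrow\quad \tau_k=0\text{ for all }k\neq j,
\]
i.e.\ the $j$-th speed vanishes only along the $j$-th coordinate \emph{axis}, not along the $j$-th \emph{face} of $[0,1]^N$. For $N\geq 3$ this is fatal: a target like $(c_1,0,\ldots,0)$ with $c_1>0$ would require $\tau$ to lie simultaneously on the $\tau_2$-axis, the $\tau_3$-axis, \ldots, hence $\tau=0$, which forces $c^*(\zeta_1)=0$ as well. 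So the image of your speed map misses every boundary point of $\R_+^N$ with a single nonzero coordinate, and neither Miranda's theorem nor a degree argument can recover a full cube in the image. Your proposed fix via companion bumps $\chi_j^\sharp$ does not address this structural issue: adding $\chi_j^\sharp$ unpins direction $\zeta_j$, but it will also (by the same transversality) unpin all other directions $\zeta_k$, so the decoupling you need cannot be achieved by smallness alone.

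The paper resolves this with a single, clean device: replace $\chi_j$ by the \emph{product} $\prod_{l\neq j}\chi_l$, i.e.\ set
\[
f_\sigma(x,u;\tau)=f_0(u)+\sigma\sum_{j=1}^N \tau_j\Bigl(\prod_{l\neq j}\chi_l(x\cdot\zeta_l,u)\Bigr).
\]
Now every term \emph{except} the $j$-th contains the factor $\chi_j$, and therefore vanishes on the graph of $U_0(x\cdot\zeta_j)$. Hence if $\tau_j=0$ then $U_0(x\cdot\zeta_j)$ is an exact stationary wave and $c^*(\zeta_j;\tau)=0$; if $\tau_j>0$ then the $j$-th term (which carries no $\chi_j$ factor) fails to vanish on that graph (Lemma~\ref{sub-strict-sub}), so $U_0(x\cdot\zeta_j)$ is a strict subsolution and $c^*(\zeta_j;\tau)>0$. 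This gives precisely the face-condition $g_j=0$ on $\{\tau_j=0\}$ and $g_j>0$ on $\{\tau_j>0\}$ required by the degree argument (Lemma~\ref{IMV}), which then produces a full cube $[0,\eta_*]^N$ in the image with no need for companion bumps or quantitative decoupling.
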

As mentioned above, in the special case $L=(1,\cdots,1)$, the set $\mathcal{S}_L$ includes $\mathcal{S}$ and thus Theorem~\ref{theo-hd} is an easy corollary of the above result. Still it should be pointed out that Theorem~\ref{main-prop-hd} is more general than Theorem \ref{theo-hd}. For example, in the case of $d=2$, for any two distinct directions $\zeta_j:=(\cos\theta_j,\sin\theta_j) \in \mathbb{S}^1$, $j=1,2$, one can check that by choosing $L_1=\cos(\theta_1-\theta_2)$, $L_2=\sin(\theta_1-\theta_2)$, any pair of speeds $(c_1,c_2)\in \R_+^2\cup \R_-^2$ is admissible in the directions  $(\zeta_1,\zeta_2)$ provided that $\tan \theta_j \in \Q$, $j=1,2$. Similarly, the speeds can be chosen independently in the eight directions $(\pm 1, 0)$, $(0,\pm 1)$ and $(\pm 1/ \sqrt{2} , \pm 1/\sqrt{2})$, though the latter four involve irrational coordinates.

The strategy of the proof of Theorem \ref{main-prop-hd} is similar in spirit to that of Theorem \ref{arb-speed-1D}. We will construct a spatially periodic bistable equation by perturbing the homogeneous balanced bistable equation. Let us first show a lemma which will be useful to ensure that our construction below defines a spatially periodic equation.
\begin{lem}\label{lem:period_nd}
Take $\zeta \in \mathcal{S}_L$ and $M >0$ such that $ L_i \zeta \cdot e_i \in  M \Z$ for all $1 \leq i \leq d$.
If $(z,u) \in \R^2 \mapsto g(z,u)$ is a $M$-periodic function in its $z$-variable, then $(x,u) \in \R^d \times \R \mapsto g (x \cdot \zeta, u)$ is $L$-periodic in its $x$-variable.
\end{lem}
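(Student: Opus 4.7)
The plan is to verify the periodicity directly by substituting $x + kL$ into the composite function and using the hypothesis $L_i \zeta \cdot e_i \in M\Z$ to reduce back to $x \cdot \zeta$ modulo $M$, then invoking the $M$-periodicity of $g$. There is essentially no obstacle here; this is a one-line computation.

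First, I would fix an arbitrary $k = (k_1, \ldots, k_d) \in \Z^d$ and compute
$$ (x + kL) \cdot \zeta \;=\; x \cdot \zeta + \sum_{i=1}^d k_i L_i (\zeta \cdot e_i). $$
By the defining property of $\mathcal{S}_L$, for each $1 \leq i \leq d$ there exists $n_i \in \Z$ such that $L_i \zeta \cdot e_i = n_i M$. Therefore
$$ (x + kL) \cdot \zeta \;=\; x \cdot \zeta + M \sum_{i=1}^d k_i n_i, $$
and the second term is an integer multiple of $M$.

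Then, since $g(\cdot, u)$ is $M$-periodic in its first variable, we immediately obtain
$$ g\bigl((x + kL) \cdot \zeta, u\bigr) \;=\; g(x \cdot \zeta, u), $$
for all $x \in \R^d$, $u \in \R$ and $k \in \Z^d$. This is exactly the $L$-periodicity of the map $x \mapsto g(x \cdot \zeta, u)$, which completes the proof.
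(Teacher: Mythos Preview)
Your proof is correct and takes essentially the same approach as the paper: both verify directly that $(x+kL)\cdot\zeta$ differs from $x\cdot\zeta$ by an integer multiple of $M$, then invoke the $M$-periodicity of $g$. The paper checks only the shift by $L_1 e_1$ and remarks that the other directions are similar, whereas you handle all $k\in\Z^d$ at once, but the argument is identical.
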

\begin{proof}
Notice that the existence of $M$ follows immediately from the definition of $\mathcal{S}_L$. Then, 
$$g( (x + L_1 e_1) \cdot \zeta  , u) =g (x \cdot \zeta + L_1  e_1 \cdot \zeta, u) = g (x\cdot \zeta, u),$$
where in the last equality we used the fact that $L_1  \zeta \cdot e_1 \in M \mathbb{Z}$. The periodicity in other directions can be checked similarly.
\end{proof}

%%%%%%%%%%%%%%%%%%%%%%%%%%%%%%%%%%%%%%%%%%%%%%%%%%%%%%%

\subsection{Construction of a bistable nonlinearity}
 
Let us now start the proof of Theorem~\ref{main-prop-hd}. From now on, we fix $L_1>0, \cdots, L_d >0$, $N \geq 2$ and let $(\zeta_1,\cdots \zeta_N)$ be distinct vectors in $\mathcal{S}_L$. In particular, due to~\eqref{assume-hd-bisbis}, there exist some positive real numbers~$M_1, \cdots, M_N$ such that 
\begin{equation*}\label{choice-M-bis}
L_i  \zeta_j \cdot e_i  \in M_j \Z \,\,\hbox{ for all }\, 1\leq i\leq d,\,  1\leq j\leq N. 
\end{equation*}
As in the one-dimensional case, the basis of our construction is the homogeneous balanced reaction $f_0$ given in \eqref{eq:balanced}. Recall that $U_0$ is the stationary traveling wave of \eqref{Allen-Cahn} normalized by $U_0(0) = 1/2$. Then for any $1\leq j\leq N$ and any shift $m\in\Z$, the function
$$x\mapsto U_0 \left( x\cdot\zeta_j+ m M_j \right)$$
gives a stationary wave of \eqref{Allen-Cahn} in the direction $\zeta_j$. 

For any integer $1\leq j\leq N$, let $E_{j}\subset\R \times [0,1]$ defined by
$$E_{j}=\left\{(z,u):\, U_0 \left( z \right) \leq u < U_0 \left(z - M_j\right) \right\}. $$
Proceeding similarly as in the definition of $\chi$ at the beginning of Section~\ref{sec:1d}, we can find a $C^1$ function $\chi_j:\R\times\R\to [0,1]$ such that it is $M_j$-periodic in its first variable, and that
$$ \chi_{j}(z,u)=0 \,\,\hbox{ for all } \,\,z \in\R,\,u\in (-\infty,\delta_0]\cup[1-\delta_0,\infty), $$
\begin{equation}\label{chi-eqv-0}
\chi_{j}=0\ \,\hbox{ in }\,\, \{(z,u)\in\R^2: \,(z-m M_j,u) \in \partial E_j \hbox{ for some } m\in \Z\},
\end{equation}
and
\begin{equation}\label{chi-large-0}
\chi_j>0 \ \,\hbox{ in }\,\, \{(z,u)\in\R^2: \, 2\delta_0\leq u\leq 1-2\delta_0, (z-m M_j,u) \in E_j\setminus \partial E_j \hbox{ for some } m\in\Z \}, 
\end{equation}
where $\delta_0\in (0,1/4)$ is a constant such that \eqref{choose-delta0} holds. 
Moreover, since $\zeta_j\in \mathcal{S}_L$,  Lemma~\ref{lem:period_nd} implies that the functions 
$$\{ (x,u ) \mapsto \chi_j (x \cdot \zeta_j , u) \}_{1 \leq j \leq N},$$
are $L$-periodic in $x$ with $L=(L_1,L_2,\cdots,L_d)$. 

Denote by $X$ the closed unit hypercube in $\R^N$, i.e. $X=[0,1]^N$.
For any $\sigma>0$ and any $\tau=(\tau_1,\cdots,\tau_N)\in X$, we define a reaction $f_{\sigma}(x,u;\tau)$ as follows:
\begin{equation*}
f_{\sigma}(x,u;\tau)=f_0(u)+\sigma\sum_{j=1}^N \tau_j\left(\prod_{l\neq j} \chi_l(x \cdot \zeta_l ,u) \right)\,\hbox{ for }\, x\in\R^d,\,u\in\R.
\end{equation*}
Clearly, $f_{\sigma}(x,u;\tau)$ is $L$-periodic in $x$,  $f_{\sigma}(\cdot,0;\tau)\equiv 0$, $f_{\sigma}(\cdot,1;\tau)\equiv 0$, and 
$$f_{\sigma}(x,u;\tau)\equiv f_0(u) \,\,\hbox{ for }\,\, u\in (-\infty,\delta_0]\cup [1-\delta_0,\infty).$$
This in particular implies that  $u\equiv 0$ and $u\equiv 1$ are linearly stable steady states of the equation
\begin{equation}\label{eq-sigma-tau}
\partial_t u =  \Delta u + f_{\sigma}(x,u;\tau)\,\,\hbox{ for }\,\,  t\in\R,\,x\in\R^d. 
\end{equation}
Furthermore, the following lemma ensures that the above equation is of the bistable type provided that $\sigma>0$ is small. 

\begin{lem}\label{exist-pf-hd}
There exists $\sigma_* >0$ such that for any $\tau\in X$, equation~\eqref{eq-sigma-tau} with $\sigma = \sigma_*$ is of the spatially periodic bistable type in the sense of Definition \ref{bistable}. Consequently, for any $e\in \mathbb{S}^{d-1}$, it has a pulsating wave connecting $0$ and $1$ in the direction~$e$, and its speed is unique.
\end{lem}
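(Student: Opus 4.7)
The plan is to verify the three conditions of Definition~\ref{bistable}. The spatial $L$-periodicity of $f_\sigma(\cdot,u;\tau)$ is built in by construction via Lemma~\ref{lem:period_nd}, and the linear stability of $u\equiv 0$ and $u\equiv 1$ with respect to $L$-periodic perturbations follows immediately from the fact that $f_\sigma(x,u;\tau)\equiv f_0(u)$ for $u$ near $0$ or near $1$, together with $f_0'(0), f_0'(1)<0$: the principal eigenvalues are $f_0'(0)$ and $f_0'(1)$ respectively, with constant eigenfunctions. The existence and uniqueness of the pulsating wave speed in every direction then follows at once from Theorem~\ref{th:tw} once the bistable structure is established. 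The core of the lemma is therefore to choose $\sigma_*>0$ so small that, uniformly in $\tau\in X$, every $L$-periodic steady state $\bar u$ of~\eqref{eq-sigma-tau} with $0<\bar u<1$ is linearly unstable.

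For this, I would follow the compactness-and-contradiction strategy of Lemma~\ref{inter-unstable}, of which the present statement is the multidimensional, multiparameter analogue. Suppose for a contradiction that there exist sequences $\sigma_n\to 0^+$, $\tau_n\in X$, $\bar u_n$ an intermediate $L$-periodic steady state of \eqref{eq-sigma-tau} at parameters $(\sigma_n,\tau_n)$, and $\psi_n>0$ the associated $L$-periodic principal eigenfunction with eigenvalue $\lambda_n\leq 0$, normalized by $\max\psi_n=1$. The $\lambda_n$ are bounded since $\partial_u f_{\sigma_n}(\cdot,\bar u_n;\tau_n)$ is uniformly bounded, and $f_\sigma(\cdot,\cdot;\tau)\to f_0$ in $C^1$ uniformly in $\tau\in X$ as $\sigma\to 0$ (since $|\tau_j|\leq 1$ and $|\chi_l|\leq 1$). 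Combining the compactness of $X$, standard elliptic estimates, and $L$-periodicity, I would extract a subsequence along which $\tau_n\to\tau_\infty\in X$, $\bar u_n\to u_\infty$, $\psi_n\to\psi_\infty$ in $C^2_{\mathrm{loc}}(\R^d)$, and $\lambda_n\to\tilde\lambda_1\leq 0$. The limits are $L$-periodic and satisfy
\begin{equation*}
\Delta u_\infty + f_0(u_\infty)=0, \qquad \Delta\psi_\infty + f_0'(u_\infty)\psi_\infty=\tilde\lambda_1\psi_\infty,\qquad 0\leq u_\infty\leq 1,\qquad \psi_\infty\geq 0,\qquad \max\psi_\infty=1,
\end{equation*}
with $\psi_\infty>0$ everywhere by the strong maximum principle.

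It then remains to rule out each possibility for $u_\infty$. If $u_\infty\equiv 0$ (resp.~$u_\infty\equiv 1$), then for large $n$ the range of $\bar u_n$ lies in $[0,\delta_0]$ (resp.~$[1-\delta_0,1]$), where $f_\sigma\equiv f_0$; integrating $\Delta\bar u_n+f_0(\bar u_n)=0$ over the period cell $[0,L_1]\times\cdots\times[0,L_d]$ eliminates the Laplacian and yields $\int f_0(\bar u_n)\,dx=0$, contradicting the strict negative (resp.~positive) sign of $f_0$ on $(0,\delta_0]$ (resp.~$[1-\delta_0,1)$). Hence $0<u_\infty<1$ in $\R^d$. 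If moreover $u_\infty$ is a constant, then $f_0(u_\infty)=0$ forces $u_\infty\equiv 1/2$, and the eigenvalue problem reduces to a constant-coefficient one whose principal eigenvalue is $f_0'(1/2)>0$; by uniqueness of the principal eigenvalue with positive eigenfunction, $\tilde\lambda_1=f_0'(1/2)>0$, contradicting $\tilde\lambda_1\leq 0$.

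The main obstacle, and the step that genuinely requires a multidimensional adaptation of Lemma~\ref{inter-unstable}, is the case where $u_\infty$ is a non-constant $L$-periodic solution of $\Delta u+f_0(u)=0$. In dimension one this is handled through $u'_\infty$; here I would replace it by any non-trivial partial derivative $\partial_i u_\infty$, which exists since $u_\infty$ is non-constant. Differentiating the equation gives $\Delta(\partial_i u_\infty)+f_0'(u_\infty)\partial_i u_\infty=0$, and being a non-trivial $L$-periodic function with zero mean over the period cell, $\partial_i u_\infty$ must change sign. Thus $0$ is an eigenvalue of $\Delta+f_0'(u_\infty)$ on $L$-periodic functions admitting a sign-changing eigenfunction, while $\tilde\lambda_1$ is an eigenvalue admitting the positive eigenfunction $\psi_\infty$. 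By the Krein-Rutman theorem, the latter is the principal eigenvalue, is simple, and is strictly greater than any other eigenvalue, so $\tilde\lambda_1>0$, which is the final contradiction and completes the proof.
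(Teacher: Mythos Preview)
Your proof is correct and follows essentially the same approach as the paper. The paper simply states that the proof follows from that of Lemma~\ref{inter-unstable} ``by some obvious modifications,'' and you have spelled out precisely those modifications: handling the extra parameter $\tau\in X$ by compactness (which is harmless since $f_\sigma(\cdot,\cdot;\tau)\to f_0$ in $C^1$ uniformly in $\tau$), and replacing the derivative $u_\infty'$ in the non-constant case by a non-trivial partial derivative $\partial_i u_\infty$, which is still an $L$-periodic, sign-changing (by periodicity and zero mean) eigenfunction of $\Delta+f_0'(u_\infty)$ with eigenvalue~$0$.
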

\begin{proof} 
The proof follows from that of Lemma \ref{inter-unstable} by some obvious modifications. 
\end{proof}

%%%%%%%%%%%%%%%%%%%%%%%%%%%%%%%%%%%%%%%%%%%%%%%%%%%%

\subsection{Properties of pulsating wave speeds}

In the discussion below, we fix $\sigma=\sigma_*$, where $\sigma_*$ is obtained in Lemma \ref{exist-pf-hd}. Then  equation~\eqref{eq-sigma-tau} becomes  
\begin{equation}\label{eq-sigma-tau-new}
\partial_t u =  \Delta u + f(x,u;\tau), \quad  t\in\R,\,\,x\in\R^d, 
\end{equation}
where $f(x,u;\tau):= f_{\sigma_*}(x,u;\tau)$.

Lemma \ref{exist-pf-hd} implies in particular that for any $\tau\in X$ and any $1\leq j\leq N$, there exists a unique speed $c^*(\zeta_j)$ such that \eqref{eq-sigma-tau-new} has a pulsating wave in the direction~$\zeta_j$ with speed $c^*(\zeta_j)$. Hereafter, we denote by $U_j$ this pulsating wave.
Since 
$$\int_{0}^1\int_{(0,L_1)\times \cdots \times(0,L_d)} f(x,u;\tau) dxdu \geq 0,$$
it follows from the sign property of the wave speed (see \eqref{sign-property}) that $c^*(\zeta_j) \geq 0$.

The first main result of this subsection is stated in the following lemma, which is concerned with the dependence of the sign of $c^*(\zeta_j)$ with respect to $\tau$. 

\begin{lem}\label{sign-speed-hd} 
For any $\tau=(\tau_1,\cdots,\tau_N)\in X$ and any integer $1\leq j\leq N$, we have 
\begin{itemize}
\item[$(i)$]
$c^*(\zeta_j)=0$ if $\tau_j=0$; 
\item[$(ii)$] 
$c^*(\zeta_j)>0$ if $\tau_j\neq0$. 
\end{itemize}
\end{lem}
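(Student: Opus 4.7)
The proof follows the three-step strategy of Lemma~\ref{sign-rightleft}, replacing the one-dimensional profile $U_0(-x+x_0)$ with its higher-dimensional analogue $V_\xi(x):=U_0(x\cdot\zeta_j+\xi)$. For statement~$(i)$, when $\tau_j=0$ I would verify directly that $V_0$ is a stationary pulsating wave of~\eqref{eq-sigma-tau-new} in direction~$\zeta_j$: since $|\zeta_j|=1$, one has $\Delta V_0=U_0''(x\cdot\zeta_j)=-f_0(V_0)$, so
$$\Delta V_0+f(x,V_0;\tau)=\sigma_*\sum_{i\neq j}\tau_i\prod_{l\neq i}\chi_l\bigl(x\cdot\zeta_l,\,U_0(x\cdot\zeta_j)\bigr).$$
For every $i\neq j$, the product contains the factor $\chi_j(x\cdot\zeta_j,U_0(x\cdot\zeta_j))$, which vanishes by~\eqref{chi-eqv-0} since $(z,U_0(z))\in\partial E_j$ for all $z\in\R$. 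Hence $V_0$ is a stationary solution connecting $1$ and $0$ along~$\zeta_j$, and the uniqueness of the wave speed in Theorem~\ref{th:tw} forces $c^*(\zeta_j)=0$.

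For statement~$(ii)$, the same computation with $\tau_j>0$ yields
$$\Delta V_\xi+f(x,V_\xi;\tau)=\sigma_*\tau_j\prod_{l\neq j}\chi_l\bigl(x\cdot\zeta_l,\,U_0(x\cdot\zeta_j+\xi)\bigr)\geq 0\quad\text{for every }\xi\in\R,$$
so $V_\xi$ is always a subsolution of the stationary equation. I would then argue that it is in fact a \emph{strict} subsolution: choosing some $u_*\in(2\delta_0,1-2\delta_0)$ and working on the affine hyperplane $\{x\cdot\zeta_j=U_0^{-1}(u_*)-\xi\}$, property~\eqref{chi-large-0} together with the fact that each $\zeta_l$ ($l\neq j$) is non-parallel to $\zeta_j$ (with a slight perturbation of $u_*$ handling the subcase $\zeta_l=-\zeta_j$) produces a point $x$ where every factor $\chi_l(x\cdot\zeta_l,u_*)$ is simultaneously positive.

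Assuming for contradiction that $c^*(\zeta_j)=0$, and letting $U$ be the corresponding stationary pulsating wave, the rest of the argument mirrors the three steps of Lemma~\ref{sign-rightleft}. Step~1 reproduces the Fife--McLeod construction verbatim with the ansatz $V_{\eta(t)}(x)-q(t)$: the nonnegativity of $\sigma_*\tau_j\prod_{l\neq j}\chi_l$ is precisely what makes this a parabolic subsolution, yielding $K>0$ such that $u_0\geq V_{\xi_0}-\varepsilon$ (with $\varepsilon\in(0,\delta_0/2]$) implies $u(t,\cdot\,;u_0)\geq V_{\xi_0+K\varepsilon}-\varepsilon e^{-\mu t}$ for all $t\geq 0$. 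Step~2 defines $\xi_*:=\inf\{\xi\in\R:\,U\geq V_\xi\text{ on }\R^d\}$, which is finite by the uniform convergences $U(x)\to 1,0$ as $x\cdot\zeta_j\to\mp\infty$ and the exponential controls of Lemma~\ref{exp-decay-hd-2}; the strict subsolution property together with the strong maximum principle applied to the parabolic solution starting from $V_{\xi_*}$ then yields $U>V_{\xi_*}$ pointwise. Step~3 fixes $C>0$ large enough that $|U_0'(z)|\leq 1/(2K)$ for $|z|\geq C-1$, finds $\hat\varepsilon\in(0,\delta_0/2)$ small with $U\geq V_{\xi_*-\hat\varepsilon}-\hat\varepsilon/(2K)$ globally, and invokes Step~1 followed by $t\to+\infty$ to obtain $U\geq V_{\xi_*-\hat\varepsilon/2}$, contradicting the definition of~$\xi_*$.

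The main obstacle is the strict subsolution argument in the second paragraph: it is the only point where the distinctness of $(\zeta_l)_{l=1}^N$ is genuinely exploited, and also where higher dimensionality plays a subtle role. Indeed, if $\zeta_l=-\zeta_j$ for some $l$, then $x\cdot\zeta_l$ is constant on every level set of $x\cdot\zeta_j$, so one cannot simply vary $x$ on that hyperplane to enter the open set $\{z:\chi_l(z,u_*)>0\}$; this is overcome by allowing $u_*$ (equivalently, the intercept $U_0^{-1}(u_*)-\xi$) to vary in $(2\delta_0,1-2\delta_0)$, which is a nontrivial interval of $x\cdot\zeta_l$-values. The remaining Fife--McLeod and comparison steps are essentially identical to their one-dimensional counterparts in Lemma~\ref{sign-rightleft}.
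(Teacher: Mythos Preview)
Your approach is essentially the paper's, and statement~$(i)$ together with the strict-subsolution analysis (including the handling of the anti-parallel case $\zeta_l=-\zeta_j$) matches the paper's Lemma~\ref{sub-strict-sub} closely. However, there is a genuine gap in your Step~3.

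You assert that Step~3 is ``essentially identical'' to its one-dimensional counterpart and in particular that one simply ``finds $\hat\varepsilon\in(0,\delta_0/2)$ small with $U\geq V_{\xi_*-\hat\varepsilon}-\hat\varepsilon/(2K)$ globally''. In the one-dimensional Lemma~\ref{sign-rightleft}, the existence of such an $\hat\varepsilon$ on the slab $\{|x|\le C\}$ is immediate because that set is compact and $U_L-U_0(-\cdot+\xi_*)$ is continuous and strictly positive there. In $\R^d$ with $d\ge 2$, the slab $\{|x\cdot\zeta_j|\le C\}$ is \emph{unbounded} in every direction orthogonal to~$\zeta_j$, and the stationary wave $U$ is not a priori periodic in those directions. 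The pointwise strict inequality $U>V_{\xi_*}$ from Step~2 therefore does not by itself yield a uniform positive lower bound for $U-V_{\xi_*}$ on the slab, and without that bound you cannot choose $\hat\varepsilon$.

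The paper fills this gap with a compactness argument that exploits the $L$-periodicity of the reaction $f(\cdot,u;\tau)$: assuming no such $\hat\varepsilon$ exists, one extracts a sequence $x_n$ in the slab with $U(x_n)-V_{\xi_*}(x_n)\to 0$, writes $x_n=x_n'+x_n''$ with $x_n'\in L_1\Z\times\cdots\times L_d\Z$ and $x_n''$ in the period cell, observes that $V_n(x):=U(x+x_n')$ is again a stationary solution of~\eqref{eq-sigma-tau-new}, and passes to a limit $V_\infty$ which touches a shift of $U_0$ from above. One then reruns the strict-subsolution / strong-maximum-principle argument of Step~2 on $V_\infty$ to obtain a contradiction. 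This extra step is precisely where the higher-dimensional geometry intervenes a second time (beyond the strict-subsolution issue you correctly flagged), and it should be included rather than waved through as identical to the one-dimensional case.
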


The above lemma is analogous to Proposition \ref{prop-zero-pos} in dimension 1. Indeed, owing to our construction of $f$, we will show that the stationary wave $U_0(x\cdot\zeta_j)$ blocks the propagating in the direction $\zeta_j$ if $\tau_j= 0$, while any shift of $U_0(x\cdot\zeta_j)$ is a strict subsolution of \eqref{eq-sigma-tau-new} if $\tau_j \neq 0$, which forces propagation with positive speed in the direction $\zeta_j$. For clarity, let us first show the following lemma. 

\begin{lem}\label{sub-strict-sub}
For any $\tau=(\tau_1,\cdots,\tau_N)\in X$ and any integer $1\leq j\leq N$, the following statements hold true: 
\begin{itemize}
\item [$(i)$] if $\tau_j=0$, then $U_0 (x\cdot\zeta_j)$ is a stationary pulsating wave of \eqref{eq-sigma-tau-new} in the direction $\zeta_j$;

\item[$(ii)$] if $\tau_j\neq 0$, then for any $\xi\in\R$, $U_0 (x\cdot\zeta_j+\xi)$ is a strict subsolution of \eqref{eq-sigma-tau-new}, i.e. $U_0 (x\cdot\zeta_j+\xi)$ is a subsolution but not a solution. 
\end{itemize}
\end{lem}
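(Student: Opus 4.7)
The plan is to verify both statements by computing the residual of the PDE at the candidate function. Since $|\zeta_j|=1$, one has $\Delta[U_0(x\cdot\zeta_j+\xi)] = U_0''(x\cdot\zeta_j+\xi) = -f_0(U_0(x\cdot\zeta_j+\xi))$, so the residual reduces to the perturbation
\[
\sigma_*\sum_{k=1}^N \tau_k\prod_{l\neq k}\chi_l\bigl(x\cdot\zeta_l,\,U_0(x\cdot\zeta_j+\xi)\bigr),
\]
which is manifestly nonnegative. Thus $v(x) := U_0(x\cdot\zeta_j+\xi)$ is always a subsolution of \eqref{eq-sigma-tau-new}; the content of the lemma lies in deciding when the residual vanishes identically, and producing a point of strict positivity otherwise.

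For statement (i), I take $\xi = 0$ and show every term in the above sum vanishes identically. The $k=j$ term is killed by the hypothesis $\tau_j = 0$. For each $k\neq j$, the product contains the factor $\chi_j(x\cdot\zeta_j,U_0(x\cdot\zeta_j))$, and since $(z,U_0(z))\in\partial E_j$ for every $z\in\R$, property \eqref{chi-eqv-0} forces this factor to vanish identically. Hence $U_0(x\cdot\zeta_j)$ is a stationary solution of \eqref{eq-sigma-tau-new}, and the $\pm\infty$-asymptotics inherited from $U_0$ make it a stationary pulsating wave in direction $\zeta_j$.

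For statement (ii), since $\tau_j>0$ it suffices to find a single point $x_0$ at which $\prod_{l\neq j}\chi_l(x_0\cdot\zeta_l,v(x_0))>0$. By property \eqref{chi-large-0}, this holds provided $v(x_0)\in(2\delta_0,1-2\delta_0)$ and, for each $l\neq j$, $(x_0\cdot\zeta_l - mM_l,\,v(x_0))$ lies in the interior of $E_l$ for some $m\in\Z$. Using the strict monotonicity of $U_0$ together with the identity $U_0^{-1}(v(x_0)) = x_0\cdot\zeta_j+\xi$, a short computation from the definition of $E_l$ recasts this latter condition as
\[
x_0\cdot(\zeta_l-\zeta_j) \notin \xi + M_l\,\Z.
\]

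The remaining step, which I expect to be the main technical point, is to show such an $x_0$ exists. The constraint $v(x_0)\in(2\delta_0,1-2\delta_0)$ carves out an open slab in $\R^d$ of positive (indeed infinite) Lebesgue measure, while for each $l\neq j$ the "bad" set $\bigcup_{m\in\Z}\{x: x\cdot(\zeta_l-\zeta_j) = \xi + mM_l\}$ is a countable union of affine hyperplanes of codimension exactly one — the codimension being one precisely because $\zeta_l\neq\zeta_j$, using the assumed distinctness of the directions. Excising the finite union over $l\neq j$ of these null sets from the slab leaves a set of positive Lebesgue measure, hence nonempty. Any $x_0$ in this set produces a strictly positive residual at $x_0$, which shows that $v$ is a subsolution but not a solution, completing the proof.
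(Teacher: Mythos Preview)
Your proof is correct and follows the same overall scheme as the paper: both reduce the question to the vanishing or nonvanishing of the perturbation term, handle part~(i) by observing that every surviving summand contains the factor $\chi_j(x\cdot\zeta_j,U_0(x\cdot\zeta_j))$ which vanishes by~\eqref{chi-eqv-0}, and for part~(ii) translate the positivity of $\chi_l$ into the condition that $x\cdot\zeta_l - U_0^{-1}(v(x))$ avoids $M_l\Z$, i.e.\ that $x\cdot(\zeta_l-\zeta_j)\notin \xi+M_l\Z$.

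The one genuine difference is in the concluding existence argument. The paper fixes a level $a\in(2\delta_0,1-2\delta_0)$, restricts to the single hyperplane $\{U_0(x\cdot\zeta_j+\xi)=a\}$, and argues dimensionally that the bad sets meet it in countably many $(d-2)$-dimensional affine subspaces; this requires singling out the case $\zeta_l=-\zeta_j$, where the bad hyperplanes are parallel to the level set and one must pick $a$ so that $U_0^{-1}(a)\notin\{(\xi+mM_l)/2\}_{m\in\Z}$. Your approach instead works in the full $d$-dimensional slab and invokes Lebesgue measure: since $\zeta_l\neq\zeta_j$ gives $\zeta_l-\zeta_j\neq 0$, each bad set is a countable union of hyperplanes of measure zero regardless of whether $\zeta_l=-\zeta_j$, so the case distinction disappears. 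Your route is slightly more economical; the paper's route is a bit more explicit about the geometry and in particular makes transparent exactly which level $a$ works.
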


\begin{proof}
If $\tau_j=0$, then by the construction of $f$, we have 
\begin{equation*}
f(x,u;\tau)=f_0(u)+\sigma_*\sum_{k\neq j} \tau_{k}\left(\prod_{l\neq k} \chi_{l}(x \cdot \zeta_l,u) \right)\,\hbox{ for }\, x\in\R^d,\,u\in[0,1].
\end{equation*}
Since $\chi_j(x \cdot \zeta_j ,U_j(x\cdot\zeta_j))\equiv 0$ because of \eqref{chi-eqv-0}, it is easily checked that $U_0(x\cdot \zeta_j)$ is a stationary wave of~\eqref{eq-sigma-tau-new}. 

Now we turn to the case where $\tau_j\neq 0$. Since each $\chi_l$, $1\leq l\leq N$, is nonnegative, it is easily seen that for any $\xi\in\R$, $U_0(x\cdot\zeta_j +\xi)$ is a subsolution of \eqref{eq-sigma-tau-new}. By the construction of $f$, to prove that $U_0(x\cdot\zeta_j +\xi)$ is a strict subsolution, it suffices to show that 
\begin{equation*}
\prod_{l\neq j} \chi_l(x \cdot \zeta_l , U_0(x\cdot\zeta_l +\xi)) \not \equiv  0.
\end{equation*}
In other words, one only needs to find some $a\in (0,1)$ such that 
\begin{equation}\label{not-subset}
\left\{x\in\R^d: U_0(x\cdot\zeta_j +\xi)=a\right\} \,\nsubseteq  \, \cup_{l\neq j} \left\{x\in\R^d: \chi_l(x \cdot \zeta_l ,a) =0\right\}.
\end{equation}
Notice that $U_0(\cdot)$ is decreasing. For any $a\in (2\delta_0,1-2 \delta_0)$ and any $1\leq l\leq N$, by \eqref{chi-eqv-0} and~\eqref{chi-large-0}, we see that $\chi_l(x \cdot \zeta_l,a) =0$ if and only if $x\cdot\zeta_l=U_0^{-1}(a)+mM_l$ for some  $m\in \Z$, where $U_0^{-1}$ is the inverse to $U_0$, and $\delta_0$ and $M_l$ are the constants given in the definition of~$\chi_l$. This means that $\{x\in\R^d: \chi_l(x \cdot \zeta_l,a) =0\}$ is a union of countably many hyperplanes in $\R^d$. By the same argument, the set $\{x\in\R^d: U_0(x\cdot\zeta_j +\xi)=a\}$ is a single hyperplane.

Now recall that $(\zeta_l)_{1\leq l\leq N}$ are different directions on $\mathbb{S}^{d-1}$. For any $1\leq l\leq N$ with $l\neq j$, we have either $\zeta_l\not \in \{ \zeta_j ,  -\zeta_j\}$, or $\zeta_l = -\zeta_j$. In the former case, 
\begin{equation*}
\{x\in\R^d: U_0(x\cdot\zeta_j +\xi)=a\}\,   \cap \,  \{x\in\R^d: \chi_l(x \cdot \zeta_l ,a) =0\}
\end{equation*}
is a union of countably many $(d-2)$-dimensional subspaces of $\R^d$. It follows that for any $a \in (2\delta_0, 1 -2 \delta_0)$, 
\begin{equation*}
\{x\in\R^d: U_0(x\cdot\zeta_j +\xi)=a\} \, \cap\, \cup_{l \neq j , \zeta_l \neq - \zeta_j } \{x\in\R^d: \chi_l(x \cdot \zeta_l ,a) =0\} \neq \emptyset ,
\end{equation*}
and more importantly it is a strict subset of $\{x\in\R^d: U_0(x\cdot\zeta_j +\xi)=a\}$. 

In the other case when $\zeta_l= -\zeta_j$, the set $\{x\in\R^d: U_0(x\cdot\zeta_j +\xi)=a\}$ is parallel to each hyperplane in $\{x\in\R^d: \chi_l(x \cdot \zeta_l, a) =0\}$. 
By choosing $a\in (2\delta_0,1- 2\delta_0)$ such that $U_0^{-1}(a) \not\in \{(\xi+mM_l)/2\}_{m\in\Z}$, one gets
$$ \{ x \in \R^d : U_0 (x \cdot \zeta_j + \xi) = a \} \cap \{ x \in \R^d : \chi_l (x \cdot \zeta_l, a) = 0 \} = \emptyset.$$
Combining the above, we can conclude that there exists $a\in (2\delta_0,1-2\delta_0)$ such that 
 \eqref{not-subset} holds true. Therefore, $U_0(x\cdot\zeta_j +\xi)$ is a strict subsolution of \eqref{eq-sigma-tau-new}.
\end{proof}

\begin{proof}[Proof of Lemma \ref{sign-speed-hd}]

Statement~$(i)$ follows directly from Lemma \ref{sub-strict-sub}~$(i)$ and the uniqueness of wave speeds of pulsating waves in a given direction (see Theorem \ref{th:tw}). The proof of statement~$(ii)$ is rather similar to that of Lemma \ref{sign-rightleft}. Therefore, we only give its outline and provide the details when considerable changes are needed. 

 Let us argue by contradiction and assume that $c^*(\zeta_j)=0$ for some $\tau$ with $\tau_j > 0$. Then there exists $U_j=U_j(x)$ a stationary pulsating wave in the direction~$\zeta_j$. Now let $\mu=- \max \{f_0'(0),\,f_0'(1)\}/2$ and $\delta_0\in (0,1/4)$ be the positive constant given in the definition of $(\chi_l)_{1\leq l\leq N}$. 
Then, by similar arguments to those used in {\it Step 1} of the proof of Lemma~\ref{sign-rightleft}, one finds some positive constant $K$ such that if for some $\varepsilon \in (0,\delta_0/2]$ and $\xi_0\in\R$, 
$$u_0(x)\geq  U_0(x\cdot\zeta_j+\xi_0)-\varepsilon \,\,\hbox{ for } \,\, x\in\R^d,$$
then $u(t,x;u_0)$ the corresponding solution of~\eqref{eq-sigma-tau-new} satisfies
\begin{equation}\label{sub-solu-new-hd} 
u(t,x;u_0) \geq  U_0(x\cdot\zeta_j+\xi_0+K\varepsilon)-\varepsilon e^{-\mu t}\,\, \hbox{ for }\,\,t\geq 0,\, x\in\R^d.
\end{equation}
By using this comparison result, one can conclude that (see {\it Step 2} of the proof of Lemma~\ref{sign-rightleft} and recall also Lemma~\ref{sub-strict-sub}~$(ii)$)
\begin{equation}\label{Uj-U0-hd}
U_j(x)> U_0(x\cdot\zeta_j+\xi_*)\,\hbox{ for all }\, x\in\R^d, 
\end{equation}
where $\xi_*$ is a real number defined by 
$$ \xi_*:=\inf\{\xi\in\R:\, U_j(x) \geq U_0(x\cdot\zeta_j+\xi)  \,\hbox{ for } \,x\in\R^d \}.$$
Take a large positive constant $C$ such that 
$$\hat{\delta}:= \sup_{|x\cdot\zeta_j|\geq C-1}  | U'_0(x\cdot\zeta_j+\xi_*) | \leq \frac{1}{2K}.$$

Now, we claim that there exists some constant $\hat{\varepsilon}\in (0,\delta_0/2)$ such that
\begin{equation}\label{Uj-U0-bound}
U_j(x) > U_0(x\cdot\zeta_j+\xi_*-\hat{\varepsilon}) \,\, \hbox{ for all }\, \, |x\cdot\zeta_j|\leq C.
\end{equation}
Suppose on the contrary that it is not true. Then due to \eqref{Uj-U0-hd}, one can find a sequence $(x_n)_{n\in\N} \subset \R^d$ such that 
$|x_n\cdot\zeta_j|\leq C$ for all $n\in\N$, and
\begin{equation}\label{Uj-shift-U_0}
\lim_{n\to +\infty} (U_j(x_n)-U_0(x_n\cdot\zeta_j+\xi_*))=0.
\end{equation}
For each $n\in\N$, write $x_n=x_n'+x_n''$ with $x_n' \in L_1\Z\times \cdots\times L_d\Z$ and $x_n'' \in [0,L_1]\times \cdots \times[0,L_d]$, and set $V_n(x)=U_j(x+x_n')$ for $x\in\R^d$. Clearly, the function $V_n(x)$ is a stationary pulsating wave of \eqref{eq-sigma-tau-new}, since $f$ is $L$-periodic in $x$. Up to extraction of some subsequence, we can assume that $x_n'' \to x_{\infty}$ and $x_n\cdot\zeta_j \to \xi_{\infty}$  as $n\to +\infty$ for some $x_{\infty}  \in [0,L_1]\times \cdots \times[0,L_d]$, $\xi_{\infty}\in [-C,C]$, and that from standard elliptic estimates, $V_n(x)\to V_{\infty}(x)$ locally uniformly in $x\in\R^d$, where $V_{\infty}$ is a stationary solution of \eqref{eq-sigma-tau-new}. It follows directly from \eqref{Uj-U0-hd} that 
$$V_{n}(x+x_n'') >  U_0((x+x_n)\cdot\zeta_j+\xi_*)\,\,\hbox{ for all }\,\, x\in\R^d,\, n\in\N.$$
Passing to the limit as $n\to +\infty$, we obtain $V_{\infty}(x+x_{\infty}) \geq  U_0(x\cdot\zeta_j+ \xi_{\infty}+\xi_*)$ for all $x\in\R^d$.  
Then the same reasoning as used in showing \eqref{Uj-U0-hd} implies that 
$$V_{\infty}(x+x_{\infty}) >  U_0(x\cdot\zeta_j+ \xi_{\infty}+\xi_*)\,\,\hbox{ for all }\,\, x\in\R^d.$$
However, by \eqref{Uj-shift-U_0}, we have $V_{\infty}(x_{\infty})=U_0(\xi_{\infty}+\xi_*)$, which is a contradiction. Therefore, we can find some constant $\hat{\varepsilon}\in(0,\delta_0/2)$ such that \eqref{Uj-U0-bound} holds true.

On the other hand, for all $|x\cdot\zeta_j|>C$, by \eqref{Uj-U0-hd} and the definition of $\hat{\delta}$, we have 
$$U_j(x) - U_0(x\cdot\zeta_j+\xi_*-\hat{\varepsilon})  \geq   -\max_{|x\cdot\zeta_j|\geq C}|U_0(x\cdot\zeta_j+\xi_*)  -U_0(x\cdot\zeta_j+\xi_*-\hat{\varepsilon})| \geq -\hat{\varepsilon}\hat{\delta}. $$
This together with \eqref{Uj-U0-bound} implies 
$$U_j(x) \geq  U_0(x\cdot\zeta_j+\xi_*-\hat{\varepsilon})-\hat{\varepsilon}\hat{\delta}  \, \,\hbox{ for all }\, \, x\in \R^d.$$
Then by using the comparison result \eqref{sub-solu-new-hd}, we obtain  
$$U_j(x) \geq  U_0(x\cdot\zeta_j+\xi_*-\hat{\varepsilon}+K\hat{\varepsilon}\hat{\delta})-\hat{\varepsilon}\hat{\delta} e^{-\mu t}  \, \,\hbox{ for }\, \, t\geq 0,\, x\in \R^d.  $$
Passing to the limit as $t\to +\infty$, we get a contradiction with the definition of $\xi_*$. Therefore,  $c^*(\zeta_j)$ must be positive.  The proof of Lemma~\ref{sign-speed-hd} is complete. 
\end{proof}

\smallskip

In the remaining part of this subsection, we consider the monotonicity and continuity of the speeds $c^*(\zeta_j)$ with respect to $\tau$. To indicate the dependence on $\tau$, we will write $c^*(\zeta_j,\tau)$ instead of $c^*(\zeta_j)$. However, if no confusion exists, we will again use $c^*(\zeta_j)$. 

For the convenience of our statement, let us define a (partial) order on $X = [0,1]^N$. For any two points $\tau^1=(\tau^1_1,\tau^1_2,\cdots, \tau^1_N)$ and $\tau^2=(\tau^2_1,\tau^2_2,\cdots,\tau^2_N)$ in $X$, we say 
\begin{equation}\label{define-order}
\tau^1\geq \tau^2 \,\, \hbox{ if and only if }\,\,  \tau^1_j\geq \tau^2_j \,\hbox{ for all }\,  1\leq j\leq N. 
\end{equation}

\begin{lem}\label{monotone-continue-hd} 
For any integer $1\leq j\leq N$, the following statements hold true:
\begin{itemize}
\item[$(i)$]  $c^*(\zeta_j,\tau^1) \geq c^*(\zeta_j,\tau^2)$ whenever $\tau^1\geq \tau^2$ in $X$; 

\item[$(ii)$]  $c^*(\zeta_j,\tau)$ is continuous with respect to $\tau\in X$.
\end{itemize}
\end{lem}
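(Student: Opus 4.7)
The monotonicity part is essentially immediate from the structure of the nonlinearity. Indeed, each $\chi_l$ takes values in $[0,1]$, so each product $\prod_{l \neq j} \chi_l(x \cdot \zeta_l, u)$ is nonnegative. Therefore, if $\tau^1 \geq \tau^2$ componentwise in $X$, then
$$f(x,u;\tau^1) - f(x,u;\tau^2) = \sigma_* \sum_{j=1}^N (\tau^1_j - \tau^2_j) \prod_{l \neq j} \chi_l(x \cdot \zeta_l, u) \geq 0$$
for all $(x,u) \in \R^d \times \R$. Since both nonlinearities $f(\cdot,\cdot;\tau^1)$ and $f(\cdot,\cdot;\tau^2)$ satisfy the hypotheses of Definition~\ref{bistable} (by Lemma~\ref{exist-pf-hd}) and the spatial homogeneity condition~\eqref{homo-neighbor} on a neighborhood of $0$ and $1$ (since each $\chi_l$ vanishes there), Lemma~\ref{speed-tau-monotone-hd} applies and yields $c^*(\zeta_j, \tau^1) \geq c^*(\zeta_j, \tau^2)$.

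For the continuity part, I would consider an arbitrary sequence $\tau^n \to \tau^\infty$ in $X$ and show that $c^*(\zeta_j, \tau^n) \to c^*(\zeta_j, \tau^\infty)$. Clearly $f(\cdot,\cdot;\tau^n) \to f(\cdot,\cdot;\tau^\infty)$ in $C^1(\R^{d+1})$, so Lemma~\ref{speed-tau-continue-hd} is available. The plan is to split into two cases according to the sign of the limit speed. If $c^*(\zeta_j, \tau^\infty) = 0$, then statement $(i)$ of Lemma~\ref{speed-tau-continue-hd} directly gives the desired convergence.

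The delicate case is when $c^*(\zeta_j, \tau^\infty) \neq 0$: then statement $(ii)$ of Lemma~\ref{speed-tau-continue-hd} requires a positive lower bound on $|c^*(\zeta_j, \tau^n)|$ along the sequence, which is not immediate. Here I would exploit the monotonicity established in part~$(i)$ together with Lemma~\ref{sign-speed-hd}. Since $c^*(\zeta_j, \tau^\infty) > 0$ (all speeds are nonnegative by the sign property~\eqref{sign-property}), Lemma~\ref{sign-speed-hd}~$(i)$ forces $\tau^\infty_j > 0$. Hence for all $n$ large enough, $\tau^n_j \geq \tau^\infty_j / 2$. Define the auxiliary parameter $\bar\tau \in X$ by $\bar\tau_j = \tau^\infty_j/2$ and $\bar\tau_k = 0$ for $k \neq j$. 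Then $\bar\tau \leq \tau^n$ for $n$ large, so by part~$(i)$,
$$c^*(\zeta_j, \tau^n) \geq c^*(\zeta_j, \bar\tau).$$
By Lemma~\ref{sign-speed-hd}~$(ii)$ applied to $\bar\tau$ (whose $j$-th coordinate is positive), we have $c^*(\zeta_j, \bar\tau) > 0$, giving a uniform positive lower bound $\sigma := c^*(\zeta_j, \bar\tau)$ on $c^*(\zeta_j, \tau^n)$ for large $n$. The hypotheses of Lemma~\ref{speed-tau-continue-hd}~$(ii)$ are then satisfied, and we conclude $c^*(\zeta_j, \tau^n) \to c^*(\zeta_j, \tau^\infty)$, as desired. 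The main obstacle is precisely this uniform lower bound in the nonzero case; the key observation that resolves it is that the monotonicity from part~$(i)$ combined with the sharp sign criterion of Lemma~\ref{sign-speed-hd} reduces the problem to evaluating $c^*$ at a single fixed parameter value $\bar\tau$, after which Lemma~\ref{speed-tau-continue-hd}~$(ii)$ applies.
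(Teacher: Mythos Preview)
Your proposal is correct and follows essentially the same approach as the paper, which simply cites Lemmas~\ref{speed-tau-monotone-hd}, \ref{speed-tau-continue-hd}, and~\ref{sign-speed-hd} and refers to the one-dimensional argument of Lemma~\ref{speed-tau-monotone-continue} for details. Your explicit introduction of the auxiliary parameter $\bar\tau$ (with only the $j$-th coordinate nonzero) is exactly the adaptation needed to pass from the scalar parameter of Lemma~\ref{speed-tau-monotone-continue} to the vector parameter here, and it cleanly secures the uniform lower bound required by Lemma~\ref{speed-tau-continue-hd}~$(ii)$.
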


This lemma is an easy consequence of Lemmas \ref{speed-tau-monotone-hd}, \ref{speed-tau-continue-hd} and \ref{sign-speed-hd}. Since the verification is similar to the proof of Lemma \ref{speed-tau-monotone-continue}, the details are not repeated here.

%%%%%%%%%%%%%%%%%%%%%%%%%%%%%%%%%

\subsection{An intermediate mean value lemma}
 Recall that $X$ is the closed unit hypercube in $\R^N$ equipped with a (partial) order in the sense of  \eqref{define-order}. In this subsection, we show an intermediate mean value result in dimension~$N$, which will ensure that the set of speeds $\{( c^*(\zeta_1,\tau),\cdots, c^*(\zeta_N,\tau)): \tau\in X\}$ at least contains a small closed hypercube in $\R^N$.
\begin{lem}\label{IMV}
Let $G = (g_1,g_2,\cdots, g_N) : X \to \R^N$ be a continuous function. Assume that
for each $1\leq j\leq N$, 
\begin{equation}\label{assume-fk-1}
g_j(x)=0\,\hbox{ for all } \, x=(x_1,x_2,\cdots, x_N)\in X  \,\hbox{ with }\, x_j=0,  
\end{equation}
and 
\begin{equation}\label{assume-fk-2}
g_j(x)>0\,\hbox{ for all } \, x=(x_1,x_2,\cdots, x_N)\in X  \,\hbox{ with }\, x_j\neq 0.   
\end{equation}
Assume also that $G$ is order-preserving in the following sense:
$$  G (x) \geq G  (y)\, \hbox{ in }\, \R^N  \, \hbox{ whenever }\, x \geq y \, \hbox{ in }\,  X. $$
Then there exists a constant $\eta_*>0$ such that  $[0,\eta_*]^N \subset G(X)$. 
\end{lem}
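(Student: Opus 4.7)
The plan is to reduce Lemma~\ref{IMV} to Miranda's theorem, a classical multidimensional generalization of the intermediate value theorem: any continuous $F=(F_1,\ldots,F_N):[0,1]^N\to\R^N$ with $F_j\leq 0$ on the face $\{x_j=0\}$ and $F_j\geq 0$ on the face $\{x_j=1\}$ for every $1\leq j\leq N$ must vanish at some point of $[0,1]^N$.

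First I would specify the constant $\eta_*$. For each $1\leq j\leq N$, let $e^{(j)}\in X$ denote the vector whose $j$-th coordinate equals $1$ and whose other coordinates vanish. Since the $j$-th coordinate of $e^{(j)}$ is nonzero, assumption~\eqref{assume-fk-2} forces $g_j(e^{(j)})>0$, so $\eta_*:=\min_{1\leq j\leq N} g_j(e^{(j)})$ is strictly positive.

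Then, for any target $y=(y_1,\ldots,y_N)\in[0,\eta_*]^N$, I would apply Miranda's theorem to $F(\tau):=G(\tau)-y$. On the face $\{\tau_j=0\}$, hypothesis~\eqref{assume-fk-1} gives $g_j(\tau)=0$, hence $F_j(\tau)=-y_j\leq 0$. On the face $\{\tau_j=1\}$, the componentwise inequality $\tau\geq e^{(j)}$ combined with the order-preserving property of $G$ yields $g_j(\tau)\geq g_j(e^{(j)})\geq\eta_*\geq y_j$, so $F_j(\tau)\geq 0$. Miranda's theorem therefore provides some $\tau\in X$ with $G(\tau)=y$, and since $y\in[0,\eta_*]^N$ was arbitrary, the inclusion $[0,\eta_*]^N\subset G(X)$ follows.

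The only non-elementary ingredient is Miranda's theorem itself; the verification of its two sign conditions is where the hypotheses of Lemma~\ref{IMV} come into play, and the crucial point is that order-preservation reduces the control of $g_j$ on the upper face $\{\tau_j=1\}$ to its value at the single corner $e^{(j)}$. If one prefers a self-contained derivation, the same boundary analysis allows a direct computation of the Brouwer degree of $F$ on $X$ via the linear homotopy $F_s(\tau):=sG(\tau)+(1-s)\,\eta_*\tau-y$, which preserves both sign conditions on the boundary and reduces the degree to that of $\tau\mapsto \eta_*\tau-y$; this degree is nonzero for $y$ in the open box, and the case of boundary~$y$ follows by a compactness limit.
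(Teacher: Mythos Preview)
Your proof is correct and takes essentially the same approach as the paper: the paper defines the identical $\eta_*=\min_j g_j(\hat e_j)$ and carries out the Brouwer-degree homotopy argument you sketch in your final paragraph (with the linear comparison map $G_0(x)=\mathrm{diag}(g_j(\hat e_j))\,x$ in place of your $\eta_*\tau$), verifying exactly the same two boundary conditions. Your invocation of Miranda's theorem is simply a named packaging of that degree computation, so the arguments coincide.
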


We will show the above lemma by a topological degree argument. Let us first recall the definition of the topological degree in finite dimension. Let 
$$ 
\Gamma :=\left\{(G,\Omega,y): \,\Omega\subset \R^N \hbox{ open bounded}, \, G\in C(\bar{\Omega}; \R^N) \hbox{ and } \, y\in \R^N \backslash G(\partial \Omega) \right\}. 
$$
An integer valued function $deg:=deg(G,\Omega,y)$ is called a topological degree associated with $(G,\Omega,y) \in \Gamma$ if it satisfies the following properties:
\begin{itemize}
\item (Normalization) $deg(I,\Omega,y)=1$ for $y\in\Omega$, where $I$ denotes the identity map of $\R^N$;   

\item (Additivity) $deg(G,\Omega,y)=deg(G,\Omega_1,y)+deg(G,\Omega_2,y)$ whenever $\Omega_1$, $\Omega_2$ are disjoint open subsets of $\Omega$ such that $y\not \in G(\bar{\Omega}\backslash (\Omega_1 \cup \Omega_2))$;

\item (Homotopy Invariance) $deg(H(s,\cdot),\Omega,y(s))$ is independent of $s\in [0,1]$ whenever $H:[0,1]\times \bar{\Omega}\to \R^N$ is continuous, $y:[0,1]\to \R^N$ is continuous and $y(s) \not\in H(s,\partial\Omega)$ for all $s\in[0,1]$. 
\end{itemize}

It is well known (see e.g., \cite[Chapter 1]{Deim}) that there is a unique function $deg: \Gamma \to \Z$  satisfying the above conditions. Furthermore, these conditions also imply:

\begin{lem}\label{degree}
\begin{itemize}
\item[$(i)$] If $deg(G,\Omega,y)\neq 0$, then there exists some $x\in\Omega$ such that $G(x)=y$;

\item[$(ii)$] $deg(A,\Omega,y)={\rm sign} (\det A)$ for linear maps $A$ with $\det A \neq 0$ and $y\in A(\Omega)$. 
\end{itemize}
\end{lem}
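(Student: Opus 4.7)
The strategy is to extract $deg(G,\emptyset,y)=0$ from the Additivity axiom and argue by contraposition. Taking $\Omega_1=\Omega$ and $\Omega_2=\emptyset$—the side condition $y\notin G(\overline{\Omega}\setminus\Omega)=G(\partial\Omega)$ being our standing hypothesis—Additivity yields $deg(G,\Omega,y)=deg(G,\Omega,y)+deg(G,\emptyset,y)$, so $deg(G,\emptyset,y)=0$. Now suppose no $x\in\Omega$ satisfies $G(x)=y$; combined with $y\notin G(\partial\Omega)$ this gives $y\notin G(\overline{\Omega})$, so Additivity with $\Omega_1=\Omega_2=\emptyset$ applies (its side condition is exactly $y\notin G(\overline{\Omega})$) and produces $deg(G,\Omega,y)=2\,deg(G,\emptyset,y)=0$. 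The contrapositive is assertion (i).

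\textbf{Plan for (ii).} Let $x_0:=A^{-1}(y)\in\Omega$, the unique preimage of $y$, and choose $r>0$ with $\overline{B_r(x_0)}\subset\Omega$. By injectivity of $A$, $y\notin A(\overline{\Omega}\setminus B_r(x_0))$, so Additivity reduces the computation to $deg(A,B_r(x_0),y)$. I then invoke the classical topological fact that $GL(\mathbb{R}^N)$ has exactly two path-connected components $GL^\pm$, distinguished by $\mathrm{sign}(\det)$ (via polar decomposition). If $\det A>0$, pick a continuous path $s\mapsto A(s)$ in $GL^+$ from $A$ to $I$; the homotopy $H(s,x)=A(s)x$ with the moving target $y(s)=A(s)x_0$ has its side condition validated by invertibility (the unique preimage of $y(s)$ under $A(s)$ is $x_0\notin\partial B_r(x_0)$). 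Homotopy Invariance and Normalization then yield $deg(A,B_r(x_0),y)=deg(I,B_r(x_0),x_0)=1=\mathrm{sign}(\det A)$. The same argument along a path in $GL^-$ reduces the case $\det A<0$ to showing $deg(R,B_r(x_0),Rx_0)=-1$, where $R=\mathrm{diag}(-1,1,\ldots,1)$.

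This reflection sign is the main obstacle, because the three axioms do not directly include translation invariance or a product formula. My plan is a dimensional reduction: shrink (via Additivity) to a coordinate-aligned cube $C=(-a,a)^N$ on which $R$ is block-diagonal, then establish the product identity $deg(R,C,0)=deg(r,(-a,a),0)\cdot deg(I_{N-1},(-a,a)^{N-1},0)$ with $r(t)=-t$—a standard but nontrivial consequence of the axioms via suspension-type homotopies combined with Additivity on the rectangular decomposition of $C$. The second factor equals $1$ by Normalization; the first is a one-dimensional computation yielding $-1$, obtained by splitting $(-a,a)=(-a,0)\cup(0,a)$ through Additivity and comparing signed boundary values of $r$ with those of the identity. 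A more conceptual route is to invoke the uniqueness of the integer-valued degree satisfying the three axioms (as carried out in Deimling) to identify it with the smooth-regular-value formula $\sum_{x\in G^{-1}(y)}\mathrm{sign}\det DG(x)$, from which (ii) is immediate. Either route concentrates the genuine work in this single sign computation; all other steps are direct manipulations of the axioms.
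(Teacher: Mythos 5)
The paper does not actually prove this lemma: its ``proof'' is the single line ``See \cite[Theorems 1.1.1 and 1.3.1]{Deim}'', i.e.\ the uniqueness of the axiomatic degree together with its construction via the regular-value formula $\sum_{x\in G^{-1}(y)}\mathrm{sign}\det DG(x)$. Your part (i) is a complete and correct derivation from the three axioms (it is the standard one, and is among the consequences recorded in Deimling's Theorem 1.1.1). Your part (ii) is correct through the excision to a ball around $x_0=A^{-1}(y)$ and the homotopy within a connected component of $GL(\R^N)$, which settles $\det A>0$ and reduces $\det A<0$ to the single identity $deg(R,B_r(x_0),Rx_0)=-1$ for the reflection $R=\mathrm{diag}(-1,1,\dots,1)$; your ``conceptual route'' for that sign --- uniqueness of the degree plus the smooth regular-value formula --- is exactly what the paper's citation invokes, so at that point the two arguments coincide.

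There is, however, a concrete flaw in your first, ``elementary'' route for the reflection sign. You propose to compute $deg(r,(-a,a),0)$ for $r(t)=-t$ by splitting $(-a,a)=(-a,0)\cup(0,a)$ via Additivity. The side condition of Additivity requires $0\notin r\bigl(\overline{(-a,a)}\setminus((-a,0)\cup(0,a))\bigr)=r(\{-a,0,a\})=\{a,0,-a\}$, which fails precisely because the excluded set contains the unique solution $t=0$. The standard axiomatic computation instead introduces an auxiliary map with two zeros away from the splitting point, e.g.\ $g(t)=1-|t|$ on $(-2,2)$: one shows $deg(g,(-2,2),0)=0$ by homotoping $g$ to a nowhere-vanishing map and applying (i), splits at $t=0$ where $g(0)=1\neq 0$, identifies the degree of the increasing branch on $(-2,0)$ with $+1$ via a translation homotopy to the identity, and concludes that the decreasing branch contributes $-1$; a further translation homotopy transfers this to $r$. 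Likewise, the product identity $deg(R,C,0)=deg(r,(-a,a),0)\cdot deg(I_{N-1},\cdot,0)$ is not a direct consequence of the three axioms and would itself require proof. Since you explicitly flag these points as the locus of the real work and supply the valid uniqueness/regular-value alternative, the proposal as a whole is sound, but the elementary route as written would not go through without these repairs.
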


\begin{proof}
See \cite[Theorems 1.1.1 and 1.3.1]{Deim}. 
\end{proof}

\begin{proof}[Proof of Lemma \ref{IMV}]
We first take a linear map $G_0: X\to\R^N$ by $G_0(x)=Ax$ for $x\in X$, 
where $A$ is a diagonal matrix given by 
$$A=diag(g_1(\hat{e}_1),g_2(\hat{e}_2),\cdots, g_N(\hat{e}_N)). $$ 
Here, $(\hat{e}_j)_{1\leq j\leq N}$ denotes the standard basis of $\R^N$. 
By the assumption \eqref{assume-fk-2}, we have $g_j(\hat{e}_j)>0$ for all $1\leq j\leq N$. It then follows directly from Lemma \ref{degree}~$(ii)$ that 
$$ deg(G_0,X_0,y)= 1 \, \hbox{ for all } \,y\in  (0,\eta_*)^N. $$
where $\eta_* :=\min_{1\leq j\leq N}g_j(\hat{e}_j)$, and  $X_0$ denotes the open unit hypercube in $\R^N$, i.e. $X_0=(0,1)^N$.

We now define the function $H:[0,1]\times X \to \R^N$ by 
$$H(s,x):=(1-s)G_0(x)+sG(x) \,\hbox{ for }\, (s,x) \in  [0,1]\times X.$$
Clearly, $H(s,x)$ is a continuous function, and $H(0,x)\equiv G_0(x)$, $H(1,x)\equiv G (x)$. Let us write $H(s,x)=(H_1(s,x),\cdots, H_N(s,x))$. 
Due to the assumption \eqref{assume-fk-1}, it is also easily checked that for each $1\leq j\leq N$, 
\begin{equation}\label{boundary-1}
H_j(s,x)=0\,\hbox{ for all } \,s\in [0,1],\,  x=(x_1,x_2,\cdots, x_N)\in X  \,\hbox{ with }\, x_j=0.
\end{equation}
Moreover, since $G$ is order-preserving, it follows that for each $1\leq j\leq N$, 
$$g_j(x)\geq g_j(\hat{e}_j)  \,\hbox{ for all }  \, x=(x_1,x_2,\cdots, x_N) \in X  \,\hbox{ with }\, x_j=1.$$
This implies that
$$H_j(s,x)\geq g_j(\hat{e}_j)\,\hbox{ for all } \,s\in [0,1],\,  x=(x_1,x_2,\cdots, x_N)\in X  \,\hbox{ with }\, x_j=1.$$
Combining this with \eqref{boundary-1}, we obtain 
$$H(s,\partial X) \cap  (0,\eta_*)^N =\emptyset\,\hbox{ for all } \,s\in[0,1]. $$
Then, by the homotopy invariance of the topological degree, we have 
$$ deg(G,X_0,y)= 1 \, \hbox{ for all } \,y\in  (0,\eta_*)^N. $$
This together with Lemma \ref{degree}~$(i)$ implies that $(0,\eta_*)^N \subset  G (X_0)$. Finally, by the closeness of~$X$ and the continuity of $G$, we can conclude that $[0,\eta_*]^N \subset  G(X)$. This ends the proof of Lemma~\ref{IMV}. 
\end{proof}

\subsection{Proof of Theorem \ref{main-prop-hd}}

In view of Lemma \ref{exist-pf-hd}, we can define a function $G: X \to  \R_+^N$ by
$$G (\tau)=(c^*(\zeta_1,\tau),\,\cdots,\, c^*(\zeta_N,\tau)).$$
It is easily seen from Lemmas~\ref{sign-speed-hd}, \ref{monotone-continue-hd} above that the function $G$ satisfies all the conditions in Lemma \ref{IMV}. As a consequence, there exists $\eta_*>0$ such that $[0,\eta_*]^N \subset G(X)$. This immediately gives the following result. 

\begin{lem}\label{speed-bound}
Let $(c_1,\cdots,c_N)$ be any $N$-uplet such that $0\leq c_j\leq \eta_*$ for all $1\leq j\leq N$. Then there exists $\tau\in X$ such that for any $1\leq j\leq N$, equation \eqref{eq-sigma-tau-new} has a pulsating wave in the direction $\zeta_j$ with speed $c_j$. 
\end{lem}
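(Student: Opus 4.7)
The plan is to apply Lemma \ref{IMV} directly to the map $G : X \to \R_+^N$ defined by
$$G(\tau) := (c^*(\zeta_1,\tau),\,\ldots,\,c^*(\zeta_N,\tau)),$$
where $c^*(\zeta_j,\tau)$ denotes the unique speed of the pulsating wave of \eqref{eq-sigma-tau-new} in direction $\zeta_j$, whose existence and uniqueness are guaranteed by Lemma \ref{exist-pf-hd}. All the work has in fact already been done; the proof amounts to packaging the earlier lemmas into the hypotheses of Lemma \ref{IMV}.

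First I would verify the hypotheses. Writing $G = (g_1,\ldots,g_N)$, Lemma \ref{sign-speed-hd} gives precisely that $g_j(\tau) = 0$ whenever $\tau_j = 0$ and $g_j(\tau) > 0$ whenever $\tau_j \neq 0$, which are exactly conditions \eqref{assume-fk-1} and \eqref{assume-fk-2}. Continuity of $G$ on $X$ and the order-preserving property (in the sense of \eqref{define-order}) follow from statements $(i)$ and $(ii)$ of Lemma \ref{monotone-continue-hd}, respectively. Thus Lemma \ref{IMV} applies and yields some $\eta_* > 0$ such that $[0,\eta_*]^N \subset G(X)$.

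Now, given any $N$-uplet $(c_1,\ldots,c_N)$ with $0 \leq c_j \leq \eta_*$ for all $1\leq j\leq N$, the inclusion $[0,\eta_*]^N \subset G(X)$ provides some $\tau \in X$ such that $G(\tau) = (c_1,\ldots,c_N)$. By the very definition of $G$, this means $c^*(\zeta_j,\tau) = c_j$ for each $1\leq j\leq N$, so that equation \eqref{eq-sigma-tau-new} with this parameter $\tau$ admits a pulsating wave in each direction $\zeta_j$ with speed exactly $c_j$. This concludes the proof.

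I do not expect any serious obstacle here: the entire difficulty has been absorbed into the preliminary sections, namely the sign analysis of Lemma \ref{sign-speed-hd}, the monotonicity/continuity of Lemma \ref{monotone-continue-hd}, and the topological-degree argument of Lemma \ref{IMV}. The only point worth mentioning is that the constant $\eta_*$ is not explicit; one gets from Lemma \ref{IMV} that $\eta_* = \min_{1\leq j\leq N} c^*(\zeta_j, \hat e_j)$, where $\hat e_j$ is the $j$-th standard basis vector of $\R^N$, but this value plays no role in the statement and will be eliminated by the rescaling argument used later to deduce Theorem \ref{main-prop-hd} from Lemma \ref{speed-bound}.
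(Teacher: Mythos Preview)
Your proof is correct and follows exactly the same approach as the paper: define $G(\tau)=(c^*(\zeta_1,\tau),\ldots,c^*(\zeta_N,\tau))$, verify the hypotheses of Lemma~\ref{IMV} via Lemmas~\ref{sign-speed-hd} and~\ref{monotone-continue-hd}, and read off the conclusion. One tiny slip: you have the roles of statements~$(i)$ and~$(ii)$ in Lemma~\ref{monotone-continue-hd} interchanged (monotonicity is~$(i)$, continuity is~$(ii)$), but this is purely cosmetic.
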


Having in hand the above lemma, we are now in a position to prove Theorem \ref{main-prop-hd}. As mentioned earlier, it immediately implies Theorem \ref{theo-hd}. 

\begin{proof}[Proof of Theorem \ref{main-prop-hd}]
The proof follows from a rescaling argument similar to that used in the last part of the proof of Theorem \ref{arb-speed-1D}. Without loss of generality, we assume that  $(c_1,\cdots,c_N)\in \R_+^N$ (the case where $(c_1,\cdots,c_N)\in \R_-^N$ can be retrieved by replacing $u$ by $1-u$).

Let $1\leq j_0\leq N$ be the integer such that 
$c_{j_0} =  \max \{c_j:  1\leq j\leq N\}$. Without loss of generality, we may assume that $c_{j_0}>0$, as in the case where $c_{j}=0$ for all $1\leq j\leq N$, the desired result is automatically proved by choosing  $f(x,u)= f_0 (u)$. 
Let us then set 
$$ \tilde{c}_j:= \eta_*\frac{c_j}{c_{j_0}}  \,\,\hbox{ for }\,\, 1\leq j\leq N,  $$
where $\eta_*$ is the positive constant provided by Lemma \ref{speed-bound}. It is clear that $0\leq \tilde{c}_j\leq \eta_*$ for all $1\leq j\leq N$. Consequently, there exists $\tau\in X$ such that for any $1\leq j\leq N$, equation~\eqref{eq-sigma-tau-new} has a pulsating wave $\tilde{U}_j(t,x)$ in the direction $\zeta_j$ with speed $\tilde{c}_j$. 

Set $\nu=c_{j_0}/\eta_*$ and 
\begin{equation*}%\label{final-f}
f(x,u)=\nu^2 f (\nu x,u;\tau), 
\end{equation*}
where $f(x,u;\tau)$ is the reaction term of equation \eqref{eq-sigma-tau-new}. Now we consider equation \eqref{eq:main} with the above reaction $f(x,u)$. It is easily checked that $f(x,u)$ is $L/\nu$-periodic in $x$, $u\equiv 1$ and $u\equiv 0$ are linearly stable steady states of~\eqref{eq:main}, and \eqref{eq:main} is of the bistable type in the sense of Definition~\ref{bistable}. Finally, notice that for any $1\leq j\leq N$, 
$$U_{j}(t,x):=\tilde{U}_{j}(\nu^2t,\nu x)$$ 
is a pulsating wave of \eqref{eq:main} with speed $\nu \tilde{c}_j = c_j$ in the direction $\zeta_j$. In other words, $c^*(\zeta_j)=c_j$. This ends the proof of Theorem \ref{main-prop-hd}.
\end{proof}

%%%%%%%%%%%%%%%%%%%%%%%%%%%%%%%%%%%%%%%%%%%%%%%%%%%%%%%
%%%%%%%%%%%%%%%%%%%%%%%%%%%%%%%%%%%%%%%%%%%%%%%%%%%%%%%%

\section{Asymmetrical terraces}\label{sec:terraces}

As already shown in Theorem \ref{arb-speed-1D}, there exists a spatially periodic bistable equation such that it has a rightward pulsating wave and a leftward pulsating wave moving with distinct speeds. In this section, we prove Theorem \ref{nonsysm-terrace} and construct a multistable equation by stacking finitely many such bistable equations with a common period. The key point is to make the speeds of these bistable pulsating waves ordered in different ways in the two opposite directions. As a consequence, different limiting states of these waves may be selected as platforms of the terraces in each direction.

\subsection{Preliminaries: Known results on terraces}

Let us first collect some properties on the existence and uniqueness of propagating terraces. These properties (which are only stated below for rightward propagating terraces but also hold in the left direction by a straightforward symmetry argument) will be used in the proof of Theorem \ref{nonsysm-terrace}.

\begin{lem}[{\cite[Theorem 1.8 and Corollary 1.9]{gm}}]\label{decom-terrace}
Assume that \eqref{eq:main} is of the spatially periodic and multistable type in the sense of Definition~\ref{def:multistable}. Then there exists a rightward propagating terrace $((U_k, c_k)_{1\leq k\leq N},(q_k)_{0\leq k\leq N})$ connecting~$0$ and~$p$, such that
$$\{q_k \, : \ 1\leq k\leq N \} \subset \{p_k  \,  : \ 1\leq k\leq  I  \}. $$
\end{lem}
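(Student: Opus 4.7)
The plan is to construct the terrace by iteratively ``merging'' pulsating waves. For each $1 \leq k \leq I$, let $\tilde U_k$ be the pulsating wave connecting $p_k$ and $p_{k-1}$ (given by the multistability assumption) and let $\tilde c_k$ denote its speed. If $\tilde c_1 \leq \tilde c_2 \leq \cdots \leq \tilde c_I$, then $((\tilde U_k, \tilde c_k)_{1\le k\le I}, (p_k)_{0\le k\le I})$ is already a propagating terrace in the sense of Definition~\ref{terrace} and we are done with $N = I$ and $q_k = p_k$. Otherwise, I would select an index $k$ with $\tilde c_k > \tilde c_{k+1}$ and aim to remove $p_k$ from the list of platforms, replacing the pair $(\tilde U_k, \tilde U_{k+1})$ by a single pulsating wave connecting $p_{k-1}$ directly to $p_{k+1}$. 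Iterating this operation at most $I-1$ times yields a decreasing sequence of platforms in $\{p_0, \ldots, p_I\}$ whose corresponding wave speeds are nondecreasing, which is the terrace asserted by the lemma.

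The core of the proof is then an auxiliary \emph{merging lemma}: whenever $\tilde c_k > \tilde c_{k+1}$, a pulsating wave connecting $p_{k-1}$ and $p_{k+1}$ exists with some speed $\bar c \in [\tilde c_{k+1}, \tilde c_k]$. To establish this, I would consider the Cauchy problem for \eqref{eq:main} with initial data $u_0 \in C(\R)$ nonincreasing in $x$, equal to $p_{k-1}$ for $x \ll 0$ and to $p_{k+1}$ for $x \gg 0$, and sandwiched between suitable spatial translates of $\tilde U_k$ from above and $\tilde U_{k+1}$ from below. By the comparison principle, $u(t,\cdot)$ remains in $[p_{k+1}, p_{k-1}]$ and nonincreasing in $x$, and the ``catching up'' of the faster lower wave $\tilde U_{k+1}$ with the slower upper wave $\tilde U_k$ forces the solution to form a single transition. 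Tracking a prescribed level set $\{u(t,\cdot) = (p_{k-1}+p_{k+1})/2\}$ and extracting a limit $t\to +\infty$ in the corresponding moving frame of speed $\bar c$, standard parabolic estimates yield an entire solution, while the spatial periodicity of $f$ together with a rational approximation argument for $\bar c$ gives the pulsating wave property \eqref{formula-period}.

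The main obstacle is verifying that this limiting object is indeed a pulsating wave rather than a subordinate terrace having $p_k$ (or some other stable periodic steady state) as an intermediate platform. The natural tool is a zero-number (intersection-count, Angenent-type) argument applied to $u(t,\cdot) - p_k$: it exploits the exponential decay rates of $\tilde U_k$ and $\tilde U_{k+1}$ near their limiting states (Lemmas~\ref{exp-decay-hd-1} and~\ref{exp-decay-hd-2}) and the strict inequality $\tilde c_k > \tilde c_{k+1}$ to show that, after the lower front overtakes the upper one, the graph of $u(t,\cdot)$ cannot meet the horizontal level $p_k$ more than once, which rules out $p_k$ as a platform in the $\omega$-limit. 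A separate subtle point is the edge case $\bar c = 0$, where the profile variable degenerates and the stationary version of Definition~\ref{pulsating-tw} has to be invoked; and when $\bar c$ equals $\tilde c_k$ or $\tilde c_{k+1}$, the merging is trivial in that the corresponding wave already realizes the direct transition. These two technical points are the reason why the conclusion of the lemma only asserts inclusion of platforms in $\{p_k\}$ rather than equality.
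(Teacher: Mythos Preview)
The paper does not prove this lemma; it quotes it from~\cite[Theorem~1.8 and Corollary~1.9]{gm}. The method there is global rather than iterative: one solves the Cauchy problem for~\eqref{eq:main} from a single Heaviside-type datum connecting $p$ to~$0$ and extracts the entire terrace at once from the large-time $\omega$-limit set. The organising tool is a \emph{steepness} order (an intersection-number comparison) preserved by the flow; it shows that the limiting object is steeper than every bistable wave $\tilde U_k$, which simultaneously forces the platforms into $\{p_k\}$ and yields the speed ordering. Your pairwise merging scheme is a different and natural idea, closer in spirit to Fife--McLeod's minimal decomposition and to the counter-propagation criterion of~\cite{fz}.

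However, the merging step as you sketch it has genuine gaps. First, the sandwich is impossible as written: $\tilde U_k$ takes values in $[p_k,p_{k-1}]$ and is strictly below $p_{k-1}$, so no translate of it can lie above a datum that equals $p_{k-1}$ for $x\ll 0$; symmetrically, no translate of $\tilde U_{k+1}$ lies below a datum equal to $p_{k+1}$ for $x\gg 0$. One needs composite barriers (e.g.\ stacked combinations $\tilde U_k+\tilde U_{k+1}-p_k$ up to correction terms) to control the front position, and this is already a nontrivial construction. Second, the remark on the edge case is simply wrong: if $\bar c=\tilde c_k$ or $\tilde c_{k+1}$, the wave $\tilde U_k$ (resp.\ $\tilde U_{k+1}$) still connects only $p_{k-1}$ to $p_k$ (resp.\ $p_k$ to $p_{k+1}$), and nothing ``trivially'' produces a front all the way to $p_{k+1}$. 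Third, and most seriously, your zero-number argument does not rule out that the limit is a two-front terrace $(p_{k-1},p_k,p_{k+1})$ with both speeds equal to~$\bar c$: such a locked pair crosses the level $p_k$ exactly once, just as a single front would, so counting intersections with $p_k$ cannot distinguish the two scenarios. This is precisely the delicate point that the global steepness machinery of~\cite{gm} circumvents by never attempting to merge.
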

The next lemma can be proved by the same argument as~\cite[Theorem 1.14]{gm}.
\begin{lem}\label{terrace-minimal}
Assume that \eqref{eq:main} is of the spatially periodic multistable type in the sense of Definition~\ref{def:multistable}, and let $\mathcal{T} := ((U_k, c_k)_{1\leq k\leq N},(q_k)_{0\leq k\leq N})$ be the propagating terrace from Lemma~\ref{decom-terrace}. If all the speeds $(c_k)_{1\leq k\leq N}$ are non-zero, then $\mathcal{T}$ is the unique (up to time shifts) rightward propagating terrace. 
\end{lem}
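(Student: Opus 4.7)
The plan is to assume that $\mathcal{T}' = ((U'_j, c'_j)_{1 \leq j \leq N'}, (q'_j)_{0 \leq j \leq N'})$ is another rightward propagating terrace connecting $0$ and $p$, and to prove that it coincides with $\mathcal{T}$ up to time shifts of the individual waves. Once the sets of platforms $\{q_k\}_{0 \leq k \leq N}$ and $\{q'_j\}_{0 \leq j \leq N'}$ are shown to coincide, the equality $c_k = c'_k$ and the identity $U_k \equiv U'_k$ up to a time shift will follow from the uniqueness (up to translation) of monotone pulsating fronts between fixed asymptotic states with non-zero speed, which is a consequence of the general results of \cite{bh2} invoked in Theorem~\ref{th:tw}. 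Hence the crux of the proof is to show $\{q_k\} = \{q'_j\}$.

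To match the platforms, I would argue by induction on $N$, identifying them from the bottom up. The central claim is $q_1 = q'_1$, which forces $c_1 = c'_1$. Since both $U_1$ and $U'_1$ emanate from the state $0$ with non-zero speeds, the comparison machinery of Section~\ref{sec:prelim}---specifically the Fife--McLeod-type sub- and super-solutions of Lemma~\ref{sub-super-nonzero} combined with the exponential decay estimates of Lemma~\ref{exp-decay-hd-1}---allows me to compare $U_1$ against appropriate shifts of $U'_1$ in the region where $u$ is close to $0$. Iterating such a comparison along the periodic orbit~\eqref{formula-period}, in the spirit of the proof of Lemma~\ref{speed-tau-monotone-hd}, will first yield $c_1 = c'_1$ and then $q_1 = q'_1$. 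Once the bottom platform is matched, the sub-terraces connecting $q_1$ to $p$ are themselves rightward propagating terraces with all non-zero speeds, and the induction hypothesis applies to conclude.

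The main obstacle is the bottom-matching step, and more precisely ruling out the possibility $q_1 \neq q'_1$. If, for instance, $q_1 < q'_1$, then $U'_1$ climbs from $0$ past $q_1$ to the larger state $q'_1$, while the corresponding bottom of $\mathcal{T}$ is a concatenation of $U_1$ (ending at $q_1$), then $U_2$ (from $q_1$ toward $q_2$ with $c_2 \geq c_1$), and so on. The strategy to derive a contradiction is to exploit the orderings $c_1 \leq c_2 \leq \cdots$ in $\mathcal{T}$ and $c'_1 \leq c'_2 \leq \cdots$ in $\mathcal{T}'$ together with the decay tools above: the single wave $U'_1$ must be simultaneously compared with $\mathcal{T}$ at levels near $0$ (where Lemma~\ref{exp-decay-hd-1} controls $U_1$ and $U'_1$ via their respective exponential rates $\lambda_+(c_1)$ and $\lambda_+(c'_1)$) and at levels near $q_1$ (where $U'_1$ crosses, while $U_1$ asymptotes), and the resulting speed inequalities turn out to be incompatible with the strict ordering $q_1 < q'_1$. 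The non-zero speed hypothesis is essential throughout, as it underpins both the strict monotonicity in $t$ of the pulsating waves (Theorem~\ref{th:tw}) and the exponential decay estimates of Lemma~\ref{exp-decay-hd-1} that drive the comparison arguments; it is also what rules out the pathological non-uniqueness examples alluded to in Definition~\ref{pulsating-tw} and the surrounding remarks.
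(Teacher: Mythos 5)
The paper does not actually prove this lemma: it simply states that it ``can be proved by the same argument as [gm, Theorem 1.14]'', i.e.\ the whole content is delegated to the steepness machinery of Giletti--Matano, which the paper imports in the form of Lemma~\ref{steep-speed-bis}. Your proposal instead tries to rebuild a proof from the Fife--McLeod comparison tools of Section~\ref{sec:prelim}, and as written it has a genuine gap precisely at the step you yourself identify as the crux. The claim that ``the resulting speed inequalities turn out to be incompatible with the strict ordering $q_1 < q'_1$'' is asserted, not established. The tools you invoke (Lemma~\ref{sub-super-nonzero}, Lemma~\ref{exp-decay-hd-1}, the scheme of Lemma~\ref{speed-tau-monotone-hd}) are all built for the bistable situation where the two objects being compared are single fronts joining the \emph{same} pair of stable states; here you must compare a single front $U'_1$ against a concatenation of several fronts of $\mathcal{T}$ separated by intermediate platforms, and the sub/supersolution construction does not carry over: near an intermediate platform $q_1$ the front $U'_1$ is in the interior of its range while the terrace is near a steady state, so the $-\mu q(t)$ dissipation that closes the Fife--McLeod estimate is unavailable there. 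This is exactly why the reference proof uses the Sturmian ``steeper than'' relation (intersection-number arguments) rather than sub/supersolutions; in this paper that input is packaged as Lemma~\ref{steep-speed-bis}, which you never invoke although it is the natural (and essentially sufficient) tool: applying it to the bottom front of $\mathcal{T}'$, which connects the common platform $0$ to a stable state, yields the speed inequalities directly, and iterating upward together with the monotonicity $c_1\leq\cdots\leq c_N$ and the uniqueness of nonzero-speed fronts gives the conclusion.

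A second, smaller gap: your case analysis ``if, for instance, $q_1 < q'_1$'' presupposes that the platforms of the two terraces are pairwise ordered. Definition~\ref{terrace} only orders the platforms \emph{within} each terrace; a platform of $\mathcal{T}'$ is an arbitrary periodic steady state and need not be comparable to a platform of $\mathcal{T}$, so the trichotomy $q_1<q'_1$, $q_1=q'_1$, $q_1>q'_1$ is not exhaustive and the non-comparable case must be excluded (again, this is handled in [gm] via steepness, not via pointwise ordering). The parts of your plan that do work --- reducing to matching the platforms, then concluding by uniqueness up to time shift of nonzero-speed pulsating fronts between fixed states, and the induction on the sub-terrace above the first matched platform --- are consistent with the intended argument, but the load-bearing step needs Lemma~\ref{steep-speed-bis} (or an equivalent intersection-number argument), not the Section~\ref{sec:prelim} comparison lemmas.
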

The third lemma is a consequence of \cite[Theorems 1.8 and 4.1]{gm} (see also~\cite[Definitions~1.3 and~1.4]{gm}):
\begin{lem}\label{steep-speed-bis}
Assume that \eqref{eq:main} is of the spatially periodic multistable type in the sense of Definition~\ref{def:multistable}, and let $\mathcal{T}:= ((U_k,c_k)_{1 \leq k \leq N}, (q_k)_{0 \leq k \leq N})$ be the propagating terrace from Lemma~\ref{decom-terrace}. Let also $p$ be a linearly stable steady state of \eqref{eq:main}, and $U$ be a pulsating wave connecting any of the $q_k$ and $p$ with speed $c$.

If $q_k >p$, then $c_{k+1} \leq c $, and furthermore, if $c =c_{k+1} \neq 0$, then $U$ is equal to~$U_{k+1}$ up to a time shift.

Similarly,  if $q_k < p$, then $c_k \geq c$, and furthermore, if $c =c_k \neq 0$, then $U$ is equal to~$U_k$ up to a time shift.
\end{lem}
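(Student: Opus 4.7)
The plan is to compare the given wave $U$ with the propagating terrace $\mathcal{T}$ from Lemma~\ref{decom-terrace}, exploiting the fact (underlying its proof in~\cite{gm}) that $\mathcal{T}$ arises as the large-time limit of a Cauchy problem with front-like initial data. I focus on Case~1 ($q_k > p$); Case~2 ($q_k < p$) follows by a symmetric argument with $U_k$ replacing $U_{k+1}$ and the $-\infty$ asymptotic of $U$ taking the role of the $+\infty$ one.

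First, I would build an auxiliary solution $v(t,x)$ of~\eqref{eq:main} starting from a smoothed truncation $v_0$ of $U(0,\cdot)$, equal to $U(0,x)$ for $x \leq M$ and to $0$ for $x \geq M+1$, where $M>0$ is large. Since $U$ takes values in $[p, q_k] \subset [0, q_k]$ (as a monotone-in-$t$ wave between two stable steady states), $v_0 \leq U(0,\cdot)$, so the parabolic comparison principle yields $v(t,\cdot) \leq U(t,\cdot)$ for every $t \geq 0$. The datum $v_0$ is front-like, descending from $q_k$ at $-\infty$ to $0$ at $+\infty$, so applying the construction underlying Lemma~\ref{decom-terrace} to the induced multistable structure on $[0,q_k]$ yields that $v(t,\cdot)$ converges as $t\to+\infty$ to the sub-terrace of $\mathcal{T}$ with fronts $(U_{k+1}, c_{k+1}), \ldots, (U_N, c_N)$.

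To obtain $c_{k+1} \leq c$, I would argue by contradiction: if $c_{k+1} > c$, choose $a \in (c, c_{k+1})$ and evaluate $v(t,x) \leq U(t,x)$ along $x = at$ as $t\to+\infty$. Since $a < c_{k+1}$, we lie behind the leading (slowest) front of the sub-terrace of~$v$, so $v(t,at) \to q_k$; since $a > c$, we lie ahead of the transition of~$U$, so $U(t,at) \to p$. The limiting inequality $q_k \leq p$ contradicts $q_k > p$. For the equality case $c = c_{k+1} \neq 0$, refining the convergence of $v$ in the moving frame $\xi = x - c_{k+1} t$ yields a one-sided comparison between the profiles of~$U$ and~$U_{k+1}$. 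Since both waves are strictly monotone in $t$ (Theorem~\ref{th:tw}) and approach $q_k$ at $-\infty$ with the same exponential rate (Lemma~\ref{exp-decay-hd-1}), a standard sliding argument combined with the strong maximum principle then forces $U \equiv U_{k+1}$ up to a time shift, and in particular $p = q_{k+1}$.

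The main obstacle lies in the first step: Lemma~\ref{decom-terrace} only guarantees the existence of \emph{some} terrace, so the convergence of $v$ to precisely the sub-terrace of $\mathcal{T}$ is not immediate. One must either invoke the uniqueness given by Lemma~\ref{terrace-minimal}, which requires each of $c_{k+1}, \ldots, c_N$ to be nonzero, or revisit the attractor construction in~\cite{gm} to verify that $v_0$ lies in the basin of attraction of this specific sub-terrace. A related subtlety is that there may exist linearly stable steady states in $(q_j, q_{j-1})$ not belonging to $\mathcal{T}$ -- one of which could be $p$ itself -- producing additional intermediate platforms in the terrace associated with $v$; this does not affect the leading-edge analysis used in the second step, but must be handled carefully when identifying the limit.
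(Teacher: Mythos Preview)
The paper does not actually supply a proof of this lemma; it simply records that the statement is a consequence of Theorems~1.8 and~4.1 of~\cite{gm}, together with the notions of \emph{steepness} and \emph{minimal terrace} introduced there (Definitions~1.3--1.4 of~\cite{gm}). In that framework the terrace $\mathcal{T}$ produced by Lemma~\ref{decom-terrace} is the minimal one: each of its fronts is steeper than any other entire solution trapped between the same pair of states, and this steepness translates directly into the speed inequalities and the rigidity in the equality case. So there is no detailed argument in the paper to compare yours against.

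Your outline is in the right spirit, since it also rests on the fact that the terrace arises as the large-time limit of a Cauchy problem with front-like data. The difference is that you try to bypass the abstract steepness property and argue instead by a direct spreading-speed comparison along rays $x=at$. This works cleanly for the strict inequality $c_{k+1}\le c$ \emph{provided} you know that the auxiliary solution $v$ is attracted precisely to the sub-terrace $(U_{k+1},\ldots,U_N)$ of $\mathcal{T}$ --- and you correctly flag this as the main obstacle. The point is that resolving it essentially forces you back into the minimality/steepness machinery of~\cite{gm}: one must know that the terrace selected by front-like data between $0$ and $q_k$ coincides with the restriction of $\mathcal{T}$, which is exactly what Theorem~4.1 of~\cite{gm} provides. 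So your route does not really avoid~\cite{gm}; it reorganises the same input.

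Two smaller remarks. First, your appeal to Theorem~\ref{th:tw} for the monotonicity of $U$ and $U_{k+1}$, and to Lemma~\ref{exp-decay-hd-1} for their common exponential rate near $q_k$, is formally out of scope: those results are stated for the bistable case between $0$ and $1$ under the homogeneity hypothesis~\eqref{homo-neighbor}, and you would need their multistable analogues (available via~\cite{bh2} and a straightforward adaptation of the decay argument). Second, when $c=0$ the wave $U$ need not be monotone in $t$, so your claim that $U$ ranges in $[p,q_k]$ requires a separate justification; this does not affect the spreading-speed step (which only uses the asymptotics of $U$ as $x\cdot e\to\pm\infty$), but it does affect the inequality $v_0\le U(0,\cdot)$ you use to set up the comparison.
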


%%%%%%%%%%%%%%%%%%%%%%%%%%%%%%%%%%%%%%%%%%%%%%%%%%%%%%%%%

\subsection{Proof of Theorem~\ref{nonsysm-terrace}}

We are now ready to prove Theorem \ref{nonsysm-terrace}. Let us first point out that in the special case $N=1$, Theorem \ref{nonsysm-terrace} is easily ensured by Theorem \ref{arb-speed-1D}, as the terraces in both directions consist of a single pulsating wave. In what follows, we give the proof in the general case $N\geq 2$. 

Let $(c_{R,k})_{1\leq k\leq I}$ and $(c_{L,k})_{1\leq k\leq I}$ be two arbitrary sequences of positive constants which will be required to satisfy certain order conditions later. By Theorem \ref{arb-speed-1D}, for each $1\leq k\leq I$, there exists a reaction $f_k(x,u)$ such that the following equation
\begin{equation}\label{eq:main-k}
\partial_t u = \partial_{xx} u +f_k(x,u) \ \ \hbox{ for }\ \ t\in\R,\,x\in\R,
\end{equation}
is spatially periodic and bistable, has a rightward pulsating wave $V_{R,k}(t,x)$ with speed $c_{R,k}$ and a leftward pulsating wave $V_{L,k}(t,x)$ with speed $c_{L,k}$, and both waves connect~$0$ and~$1$. Let us point out that the functions $f_k$ may have different periods; hence for each $k$, we denote by $L_k$ the period of $f_k$ with respect to its first variable.

Moreover, one can observe from the proof of Theorem \ref{arb-speed-1D} that for each $1\leq k\leq I$,  the function$f_k(x,u)$ can be chosen as homogeneous when $u$ is close to $0$ or $1$, satisfying
\begin{equation}\label{property-fk}
f_k(x,0)\equiv  f_k(x,1)\equiv 0,  \quad  \partial_u f_k(x,0)\equiv \partial_u f_k(x,1)\equiv \gamma, 
\end{equation}
where $\gamma$ is a negative constant (independent of $k$), and that up to time shifts, $V_{R,k}(t,x)$ ({\it resp.} $V_{L,k}(t,x)$) is the unique rightward ({\it resp.} leftward) pulsating wave of \eqref{eq:main-k} connecting~$0$ and~$1$. 

Now we define the reaction $g$ as
\begin{equation*}%\label{terrace-g}
g(x,u):= f_k(x,u-(I-k))\,\ \hbox{ for }\, x\in\R,\, I-k<u\leq I-k+1,
\end{equation*}
for $1\leq k\leq I$. Then we have the following lemma:
\begin{lem}\label{lem:rational_multistable}
Assume that $L_k \in \mathbb{Q}$ for all $1 \leq k \leq I$.
Then the equation
\begin{equation}\label{eq:main-stack}
\partial_t u = \partial_{xx} u + g(x,u) \ \ \hbox{ for }\ \ t\in\R,\,x\in\R,
\end{equation}
is of the spatially periodic multistable type in the sense of Definition~\ref{def:multistable}. 
\end{lem}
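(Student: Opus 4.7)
The plan is to verify the conditions of Definition~\ref{def:multistable} for the equation~\eqref{eq:main-stack}. First, since all $L_k \in \mathbb{Q}$, I can pick a common positive period $L$ with $L/L_k \in \mathbb{N}$ for each $1 \leq k \leq I$; then each $f_k$ is $L$-periodic in its first variable, hence so is $g$ by its piecewise definition. Next, I would check that $g \in C^1(\mathbb{R}^2)$. On each open strip $\{I-k < u < I-k+1\}$ the function $g$ inherits the regularity of $f_k$, so the only non-trivial matching occurs at the integer values $u = I-k+1$ separating consecutive strips. There, the one-sided limits of $g$, $\partial_u g$, and $\partial_x g$ all coincide thanks to the uniform normalization~\eqref{property-fk}: the values agree because $f_k(x,1) = f_{k-1}(x,0) = 0$; the $u$-derivatives agree because both equal the $k$-independent constant $\gamma$; and the $x$-derivatives agree because the identities $f_k(x,0) \equiv f_k(x,1) \equiv 0$ force $\partial_x f_k(x,0) \equiv \partial_x f_k(x,1) \equiv 0$. (Outside $[0,I]$, one extends $g$ in a $C^1$ manner using any of the $f_k$, which is harmless for what follows.)

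Second, I would propose the sequence of platforms to be the constants $p_k := I-k$ for $0 \leq k \leq I$, with $p := p_0 = I$. Each $p_k$ is trivially an $L$-periodic steady state of~\eqref{eq:main-stack}, and since $\partial_u g(x,p_k) \equiv \gamma < 0$ by~\eqref{property-fk}, the principal eigenvalue of the linearization at $p_k$ on the periodic space equals $\gamma$, so $p_k$ is linearly stable; the strict ordering $p_0 > p_1 > \cdots > p_I$ is immediate.

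Third, for each $1 \leq k \leq I$, I would exhibit a pulsating wave connecting $p_k$ and $p_{k-1}$ as $W_k(t,x) := V_{R,k}(t,x) + (I-k)$. Since $V_{R,k}(t,x)$ is valued in $(0,1)$ with limits $1$ at $x \to -\infty$ and $0$ at $x \to +\infty$ (on the moving frame), the shifted function $W_k$ is valued in $(I-k, I-k+1)$ with limits $p_{k-1}$ and $p_k$, so $g(x,W_k) = f_k(x,V_{R,k})$ throughout its range and $W_k$ solves~\eqref{eq:main-stack}. The pulsating-wave structure with speed $c_{R,k}$ is inherited from $V_{R,k}$, and the $L_k$-periodicity of its profile upgrades to $L$-periodicity through $L/L_k \in \mathbb{N}$. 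The proof contains no real obstacle: the only delicate points are the $C^1$ gluing at integer values (tailored to the normalization~\eqref{property-fk}) and the existence of a common period (tailored to the rationality assumption on the $L_k$).
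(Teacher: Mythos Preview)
Your proposal is correct and follows essentially the same approach as the paper's proof: both hinge on the existence of a common period $L$ (from rationality of the $L_k$), the $C^1$ regularity of $g$, and the linear stability of the constant platforms $p_k = I-k$. Your treatment is in fact more explicit than the paper's on the $C^1$ gluing at integer values (which the paper dismisses as ``clearly of class $C^1$''), and you directly exhibit the pulsating waves $W_k = V_{R,k} + (I-k)$, whereas the paper invokes the bistable structure on each layer to assert their existence and only writes down this explicit shift construction immediately after the lemma's proof.
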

\begin{proof}
First, the function $g$ is clearly of class $C^1$, and it is also $L$-periodic with respect to its first variable, where $L$ is the smallest positive number such that $L/L_k \in \mathbb{N}$ for any $1 \leq k \leq I$, which exists thanks to the fact that the $L_k$ are rational numbers.

Next, the constant steady states $p_{k}:=I -k$ are linearly stable (regardless of the choice of the period). Moreover, by construction, the restriction of \eqref{eq:main-stack} to any interval $u \in [I-k , I-k+1]$ with $1 \leq k \leq I$ is of the bistable type, i.e. any $L_k$-periodic state strictly between $p_k$ and $p_{k-1}$ is linearly unstable. In particular, there exists a pulsating wave connecting $p_k$ and $p_{k-1}$; notice that, since $L$ is a multiple of~$L_k$, changing the period in Definition~\ref{pulsating-tw} is inconsequential.
\end{proof}
In particular, by Lemmas~\ref{decom-terrace} and~\ref{terrace-minimal}, equation~\eqref{eq:main-stack} admits a unique propagating terrace in the right and left directions. Furthermore, from the above proof, we have for each $1\leq k\leq I$ that
$$U_{R,k}(t,x):=V_{R,k}(t,x)+I-k \quad ({\it resp.} \,\,\, U_{L,k}(t,x):=V_{L,k}(t,x)+I-k)$$ 
is a rightward ({\it resp.} leftward) pulsating wave of \eqref{eq:main-stack} connecting ${p}_{k-1}$ and ${p}_k$ with speed $c_{R,k}$ ({\it resp.} $c_{L,k}$). Furthermore, whenever $c_{R,k}$ ({\it resp.} $c_{L,k}$) is non-zero, it is the unique (up to time shifts) pulsating wave connecting $p_{k-1}$ and $p_k$. 

By requiring the sequences of speeds $(c_{R,k})_{1\leq k\leq I}$ and $(c_{L,k})_{1\leq k\leq I}$ to be ordered differently, we will show that equation \eqref{eq:main-stack} has various asymmetrical terraces connecting $0$ and~$p_0$ in the two opposite directions. For clarity, we divide the remaining proof into three steps, and in each step, we show one statement of Theorem~\ref{nonsysm-terrace}. We point out that, while the propagating terraces obtained below are all associated with equation \eqref{eq:main-stack} and thus connect~$0$ and $p_0$, one can immediately replace $p_0 = I$ by 1 thanks to the simple change of variables $\tilde{u}= u/I$.\smallskip

{\it Step 1: Proof of statement $(i)$.} Let the above sequences of speeds $(c_{R,k})_{1\leq k\leq I}$ and $(c_{L,k})_{1\leq k\leq I}$ satisfy 
$$0<c_{R,1}<c_{R,2}<\cdots<c_{R,I},\quad 0<c_{L,1}<c_{L,2}<\cdots<c_{L,I},$$ 
and 
\begin{equation}\label{speed-require-1}
c_{R,k}\neq c_{L,k}\,\hbox{ for all } \, 1\leq k\leq I.
\end{equation}
Without loss of generality, we can assume that the periods $L_k$ are rational,  thanks to the following lemma: 
\begin{lem}\label{continuity-L}
For any $1\leq k\leq I$ and any small $\delta>0$, the following equation 
\begin{equation*}\label{eq:approx_period}
\partial_t u = \partial_{xx} u + f_k\left( \frac{x}{1+\delta} ,u \right)\,\hbox{ for }\, t\in\R,\,x\in\R,
\end{equation*}
is also of the spatially periodic bistable type (with period $(1+\delta) L_k$), and it has a rightward pulsating wave $V_{R,k,\delta}(t,x)$ with speed $c_{R,k,\delta}$ and a leftward pulsating wave $V_{L,k,\delta}(t,x)$ with speed $c_{L,k,\delta}$. Moreover, the pair of speeds $(c_{L,k,\delta}, c_{R,k,\delta})$ converges to $(c_{L,k}, c_{R,k})$ as $\delta\to 0$.
\end{lem}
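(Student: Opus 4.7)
The plan is to reduce the problem to a $C^1$-perturbation of the original equation with reaction $f_k$, then invoke the continuity results established in Section~\ref{sec:prelim}. First, I would introduce the change of variables $v(t',y):=u((1+\delta)^2 t',(1+\delta)y)$, so that $u$ solves the rescaled equation if and only if $v$ solves
$$\partial_{t'} v=\partial_{yy} v+(1+\delta)^2 f_k(y,v),\quad t'\in\R,\,y\in\R,$$
which is $L_k$-periodic in $y$ and whose reaction converges to $f_k$ in $C^1$ as $\delta\to 0$. A direct computation shows that, under this change of variables, a pulsating wave of the original equation with speed $c$ in some direction $e\in\{\pm 1\}$ corresponds to a pulsating wave of the rescaled equation with speed $(1+\delta)c$ in the same direction.

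Next, I would show that the rescaled equation is bistable in the sense of Definition~\ref{bistable} for every sufficiently small $\delta>0$. Linear stability of $0$ and $1$ is immediate from \eqref{property-fk}, since the linearized operators have principal eigenvalue $(1+\delta)^2\gamma<0$. Linear instability of any intermediate $L_k$-periodic steady state follows by the same contradiction-compactness argument as in the proof of Lemma~\ref{inter-unstable}: a sequence $\delta_n\to 0$ producing intermediate steady states with non-positive principal eigenvalue would, up to extraction, yield one for the limit equation with reaction $f_k$, contradicting its bistability. Theorem~\ref{th:tw} then furnishes unique rightward and leftward pulsating wave speeds $\tilde c_{R,k,\delta}$ and $\tilde c_{L,k,\delta}$ for the rescaled equation, and translating back gives $c_{R,k,\delta}=\tilde c_{R,k,\delta}/(1+\delta)$ and $c_{L,k,\delta}=\tilde c_{L,k,\delta}/(1+\delta)$.

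Finally, I would pass to the limit via Lemma~\ref{speed-tau-continue-hd} applied to the sequence of reactions $(1+\delta_n)^2 f_k\to f_k$. If $c_{R,k}=0$, then statement~$(i)$ immediately gives $\tilde c_{R,k,\delta_n}\to 0$. If $c_{R,k}\neq 0$, statement~$(ii)$ requires $|\tilde c_{R,k,\delta_n}|$ to be bounded below by some $\sigma>0$. I would verify this by contradiction: a subsequence along which $\tilde c_{R,k,\delta_n}\to 0$ would, by the limiting procedure used in the proof of Lemma~\ref{speed-tau-continue-hd}$(i)$, produce a stationary pulsating wave for the limit equation with reaction $f_k$, contradicting the uniqueness of the wave speed in Theorem~\ref{th:tw} together with the assumption $c_{R,k}\neq 0$. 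The leftward speed is handled identically. Dividing by $1+\delta$, we obtain the claimed convergence $(c_{L,k,\delta},c_{R,k,\delta})\to (c_{L,k},c_{R,k})$. The one step that requires genuine care is this final non-degeneracy check, since Lemma~\ref{speed-tau-continue-hd}$(ii)$ does not by itself preclude a jump of the perturbed wave speed down to $0$.
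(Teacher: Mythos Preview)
Your approach is different from the paper's and largely sound, but the last step has a real gap that you correctly flag yet do not actually close.

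The paper's own proof is a one-line appeal to an external result: it observes that $0$ and $1$ are uniformly stable zeros of $f_k(x,\cdot)$ (thanks to \eqref{property-fk}) and then cites \cite[Theorem~1.8]{dhz1}, which directly gives continuity of the pulsating wave speeds with respect to the spatial period. Your route---rescaling to a fixed period so as to reduce to a $C^1$-perturbation of the reaction, then invoking Lemma~\ref{speed-tau-continue-hd}---is a natural self-contained alternative and the change of variables and bistability check are fine.

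The difficulty is the case $c_{R,k}\neq 0$. You propose to rule out $\tilde c_{R,k,\delta_n}\to 0$ by saying that ``the limiting procedure used in the proof of Lemma~\ref{speed-tau-continue-hd}\,$(i)$'' would then manufacture a stationary pulsating wave for the limit equation. But that limiting procedure relies precisely on the limit speed $c_\infty$ being nonzero: the spatial pulsating structure \eqref{limit-pulsating-hd} is recovered by passing to the limit in $U_n(t+kL\cdot e/c^*(f_n),\,x+kL)=U_n(t,x)$, and this breaks down when $c^*(f_n)\to 0$ since the time shift blows up. What survives in the limit is only a time-monotone entire solution of the limit equation, and showing that it (or some time-slice of it) is a bona fide stationary wave connecting $0$ and $1$ requires additional input---for instance, uniform-in-$n$ control on the spatial decay of the waves (cf.\ Lemmas~\ref{exp-decay-hd-1} and~\ref{exp-decay-hd-2}) so that the asymptotics $U_\infty\to 0,1$ as $x\cdot e\to\pm\infty$ persist. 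The paper itself acknowledges, in the paragraph following Lemma~\ref{speed-tau-continue-hd}, that the hypothesis $|c^*(f_n)|\geq\sigma$ can be relaxed, but explicitly declines to prove it; your argument would have to supply exactly that missing piece. So either carry out this compactness argument in full, or, as the paper does, invoke \cite[Theorem~1.8]{dhz1}.
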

\begin{proof}
Recall that $f_k(x,u)$ is homogeneous when $u$ is close to $0$ or $1$, and satisfies \eqref{property-fk}. One easily checks that $0$ and $1$ are uniformly (in $x$) stable zeros of $f_k(x,\cdot)$ in the sense that: there exists $\sigma\in (0,\frac{1}{2})$ such that $f_k(x,u)\leq -\frac{\gamma}{2}u$ for all $(x,u)\in \R\times [0,\sigma]$ and $f_k(x,u)\geq \frac{\gamma}{2}(1-u)$ for all $(x,u) \in \R\times[1-\sigma,1]$. Then,  \cite[Theorem 1.8]{dhz1} immediately implies 
Lemma \ref{continuity-L}.
\end{proof}

As a consequence, Lemma~\ref{lem:rational_multistable} applies and~\eqref{eq:main-stack} is multistable in the sense of Definition~\ref{def:multistable}. Then, according to Definition \ref{terrace}, $(({U}_{R,k},c_{R,k}),({p}_k))$ ({\it resp.} $(({U}_{L,k},c_{L,k}),({p}_k))$) is a propagating terrace of~\eqref{eq:main-stack} connecting~$0$ and ${p}_{0}$ in the right ({\it resp.} left) direction. It further follows from Lemma~\ref{terrace-minimal} that they are the unique propagating terrace connecting~$0$ and $p_{0}$ in each direction. Clearly, these two terraces share the same platforms, but because of \eqref{speed-require-1}, the speeds of pulsating waves are different. This ends the proof of statement~$(i)$.\smallskip

{\it Step 2: Proof of statement~$(ii)$.} We first show that, by letting $I=3$ and the speeds $(c_{R,k})_{1\leq k\leq 3}$, $(c_{L,k})_{1\leq k\leq 3}$ satisfy 
\begin{equation}\label{speed-require-2}
0<c_{R,2}<c_{R,1}<c_{R,3},\quad 0<c_{L,1}<c_{L,3}<c_{L,2},
\end{equation} 
equation \eqref{eq:main-stack} has a rightward terrace and a leftward terrace connecting $0$ and $p_{0}$, and both terraces consist of two pulsating waves, but they have different intermediate platforms.  

By Lemma \ref{continuity-L}, we can again assume without loss of generality that the periods $L_k$ are rational so that Lemma~\ref{lem:rational_multistable} applies and~\eqref{eq:main-stack} is multistable in the sense of Definition~\ref{def:multistable}. By Lemmas~\ref{decom-terrace} and~\ref{terrace-minimal}, there exist a unique rightward terrace $((U'_{R,k},c'_{R,k}),(q_{R,k}))$ and a unique leftward terrace $((U'_{L,k},c'_{L,k}),(q_{L,k}))$ of \eqref{eq:main-stack} connecting $0$ and ${p}_0$, and
\begin{equation}\label{q-RL-k}
\{q_{R,k}\} \subset \{{p}_k \},\quad   \{q_{L,k}\} \subset \{{p}_k \}.
\end{equation}
We claim that
\begin{equation}\label{platform-right}
q_{R,1}={p}_2\quad \hbox{ and }\quad  q_{R,2} = {p}_3.
\end{equation}
Assume by contradiction that \eqref{platform-right} is not true. It then follows from \eqref{q-RL-k} that two cases may happen: either $q_{R,1}={p}_3$ or $q_{R,1}={p}_1$. 

If the former case happens, then the rightward terrace connecting $0$ and ${p}_0$ is a single front $U'_{R,1}$. By Lemma~\ref{steep-speed-bis}, one infers that $c_{R,3} < c'_{R,1} < c_{R,1}$, which is a contradiction with the order relations of speeds $(c_{R,k})$ stated in \eqref{speed-require-2}. 

In the latter case, due to $c_{R,2}<c_{R,3}$ and by using Lemma \ref{steep-speed-bis} again, one can check that $q_{R,2}={p}_2$ and $q_{R,3}={p}_3$. This means that the rightward terrace connecting $0$ and ${p}_0$ consists of three pulsating waves. Recall that, for each $1\leq k\leq 3$, the function ${U}_{R,k}$ is the unique (up to time shifts) pulsating wave connecting ${p}_{k}$ and ${p}_{k-1}$; hence 
$$c'_{R,k}=c_{R,k} \,\,\hbox{ for }  \, 1\leq k\leq 3. $$
Due to \eqref{speed-require-2}, this is a contradiction with the fact that the speeds of a terrace must be ordered. Therefore, \eqref{platform-right} is proved. In the left direction, one can proceed analogously as above to get that 
$$q_{L,1}={p}_1\quad \hbox{ and }\quad  q_{L,2} = {p}_3 .$$
Now we can conclude that both $((U'_{R,k},c'_{R,k}),(q_{R,k}))$ and $((U'_{L,k},c'_{L,k}),(q_{L,k}))$ consist of two pulsating waves, but the intermediate platforms (respectively $p_2$ and $p_1$) are different. Therefore, we have obtained a situation where statement $(ii)$ of Theorem~\ref{nonsysm-terrace} holds with $N=2$. Another use of Lemma~\ref{steep-speed-bis} also implies that
$$c'_{R,1} \in  (c_{R,2}, c_{R,1} ), \quad c'_{L,2} \in ( c_{L,3} , c_{L,2} ) ,$$
which turns out to be useful in the induction sketched below.

Indeed, for any integer $N \geq 3$, by using an induction argument and repeating the above analysis, one can find a reaction $g(x,u)$ such that \eqref{eq:main-stack} has rightward and leftward terraces connecting $0$ and $p_0$ with $N$ pulsating waves, but they do not share any intermediate platform. For instance, when $N=3$, one must choose $I=5$ and, in addition to~\eqref{speed-require-2}, assume that $c_{R,5} > c_{R,3} > c_{R,4} > c_{R,1}$ and $c_{L,4} > c_{L,5} > c_{L,2}$. More generally, to deal with the case $N+1$, one must increase $I$ by 2 and choose $c_{R,k}$ and $c_{L,k}$ (with $k=I+1, I+2$) so that $c_{R,I +2} > c_{R,I} > c_{R,I+1} >\max_{k \leq I-1} c_{R,k}$ and $c_{L,I+1} > c_{L,I+2} > \max_{k \leq I} c_{L,k}$. The details are omitted.  \smallskip

{\it Step 3: Proof of statement~$(iii)$.} We choose $I=N$ and let the sequences of speeds $(c_{R,k})_{1\leq k\leq N}$, $(c_{L,k})_{1\leq k\leq N}$ satisfy 
\begin{equation}\label{speed-require-3}
c_{R,1}>c_{R,2}>\cdots> c_{R,N}>0\quad\hbox{and}\quad 0<c_{L,
1}<c_{L,2}<\cdots<c_{L,N}.
\end{equation}
As in the first two steps, we also assume without loss of generality that the periods $L_k$ are all rational numbers. Then by Lemma~\ref{lem:rational_multistable}, we get that~\eqref{eq:main-stack} is multistable in the sense of Definition~\ref{def:multistable}. Next, the same reasoning as used in the proof of {\it Step 1} implies that $(({U}_{L,k},c_{L,k}),({p}_k))$ is the unique terrace of  \eqref{eq:main-stack} connecting $0$ and ${p}_0$ in the left direction. From Lemmas~\ref{decom-terrace} and~\ref{terrace-minimal}, there exists a unique terrace $((U'_{R,k},c'_{R,k})_{1\leq k\leq M},(q_{R,k})_{1\leq k\leq M})$ connecting 0 and~${p}_0$ in the right direction, and
$$(q_{R,k})_{1\leq k\leq M} \subset  (p_k)_{1\leq k\leq N},$$
where $M$ is a positive integer less than $N$.   

Next, we claim that $M=1$, that is, the rightward terrace of \eqref{eq:main-stack} connecting $0$ and~${p}_0$ is a single wave. Assume by contradiction that $M \geq 2$. Then $q_{R,1} = p_k$ for some $1 \leq k \leq N-1$, and in particular $U'_{R,1}$ connects $p_k$ and $p_0$. By Lemma~\ref{steep-speed-bis}, we find that $c'_{R,1} \geq c_{R,k}$. Similarly, we also get that $c'_{R,2} \leq c_{R,k+1}$. Recalling~\eqref{speed-require-3}, we obtain that $c'_{R,2} < c'_{R,1}$, which contradicts the order condition on the speeds of a propagating terrace. Thus $M=1$ and the proof of Theorem~\ref{nonsysm-terrace} is complete. 

\section*{Acknowledgments} 
This work was initiated when the authors visited Meiji University, in the framework of the International Research Network ReaDiNet jointly supported by CNRS and Meiji University.

The authors would like to thank Professor Xing Liang for suggesting the issue of one-dimensional admissible speeds tackled in Theorem~\ref{arb-speed-1D}, and Professors Yihong Du and Fran{\c c}ois Hamel for many helpful discussions.

\end{document}